\documentclass[11pt,twoside]{amsart}

\usepackage[latin1]  {inputenc}%
\usepackage[T1]      {fontenc }%
\usepackage          {amsmath }%
\usepackage          {amsfonts}%
\usepackage          {amssymb }%
\usepackage          {amsthm  }%
\usepackage          {a4wide  }%
\usepackage          {url     }%
\usepackage          {tikz    }%
\usepackage[bookmarks=false,pdfborder={0 0 0.05}]{hyperref}
\usepackage[all]{xy}
\usepackage{lmodern}
\usepackage{tikz-cd}

\usepackage{enumerate, amsmath, amsfonts, amssymb, amsthm,  wasysym, graphics, graphicx, xcolor, frcursive,comment,bbm}

\usepackage{etex}

\definecolor{darkblue}{rgb}{0.0,0,0.7} % darkblue color
 % darkblue command
\definecolor{darkred}{rgb}{0.7,0,0} % darkred color
 % darkred command

\newenvironment{customthm}[1]
  {\innercustomthm}
  {\endinnercustomthm}

\usepackage{hyperref}
\usepackage[all]{xy}
\usepackage[T1]{fontenc}

\usepackage{MnSymbol}

\def\defn#1{{\sf #1}}

 % edge
 % edge
 % edge

\DeclareMathOperator{\XX}{\mathbb{X}}
\DeclareMathOperator{\COH}{coh}
\newcommand{\RR}{\mathbb R}
\newcommand{\ZZ}{\mathbb Z}

\newcommand{\spanz}{\mathrm{span}_{\mathbb{Z}}}
\newcommand{\spanr}{\mathrm{span}_{\mathbb{R}}}

\DeclareMathOperator{\Red}{Red}
\DeclareMathOperator{\Span}{span}

\DeclareMathOperator{\idop}{id}

\DeclareMathOperator{\Redd}{Red^{gen}}

\DeclareMathOperator{\GL}{GL}
\DeclareMathOperator{\TR}{tv}

\DeclareMathOperator{\END}{End}
\DeclareMathOperator{\Sym}{Sym}

\DeclareMathOperator{\NN}{\mathbb{N}}

\DeclareMathOperator{\MOD}{mod}

\DeclareMathOperator{\EXT}{Ext}

\DeclareMathOperator{\Thi}{Thick}

\DeclareMathOperator{\Mov}{Mov}
\DeclareMathOperator{\fin}{fin}

\newtheorem{theorem}{Theorem}[section]
\newtheorem{corollary}[theorem]{Corollary}
\newtheorem{proposition}[theorem]{Proposition}
\newtheorem{Lemma}[theorem]{Lemma}

\theoremstyle{definition}
\newtheorem{definition}[theorem]{Definition}
\newtheorem{propdef}[theorem]{Proposition and Definition}
\newtheorem{remark}[theorem]{Remark}

\title[Extended Weyl groups, Hurwitz transitivity and weighted projective lines II]{Extended Weyl groups, Hurwitz transitivity and weighted projective lines II: a uniform approach}

\author[B.~Baumeister]{Barbara Baumeister}
\address{Barbara Baumeister, Universit\"at Bielefeld, Germany}
\email{b.baumeister@math.uni-bielefeld.de}
\thanks{}

\author[P.~Wegener]{Patrick Wegener}
\address{Patrick Wegener, Leibniz Universit\"at Hannover, Germany}
\email{patrick.wegener@math.uni-hannover.de}
\thanks{}

\author[S.~Yahiatene]{Sophiane Yahiatene}
\address{Sophiane Yahiatene, Universit\"at Bielefeld, Germany}
\email{syahiate@math.uni-bielefeld.de}

\date{\today}

\begin{document}

\begin{abstract}
In this paper, which is a continuation of \cite{BWY21}, we establish a uniform approach to extended Weyl groups. First, we focus on extended Weyl groups of domestic or wild type. We start with an extended Coxeter-Dynkin diagram, attach to it a set of roots (vectors) in an $\RR$-space and define the extended Weyl groups as groups that are generated by the of set reflections $S$ related to these roots, the so called simple reflections.
We relate to such a system $(W,S)$ a generalized root system, and determine the structure of $W$. 
%In particular we present a  normal form for the elements of $W$. 
Then we show that every extended Weyl system of domestic or wild type is an extended Coxeter system of star type (\cite{Looi80, Lek}).
We further consider the hyperbolic cover of an extended Weyl system, and show
that the hyperbolic covers of the extended Weyl systems are precisely the extended Coxeter systems  of star type.

Further we define  Coxeter transformations  in $W$. We show 
the transitivity of the Hurwitz action  on the set of reduced reflection factorizations of a Coxeter transformation (the dual Matsumoto property) in an extended Coxeter systems of star type, where
the reflections are the conjugates of the simple reflections in $W$ (Theorem~\ref{thm:AllgAussage}) 
This then also proves the respective statement for the extended Weyl systems of domestic or wild type (Theorem~\ref{thm:main}), which is failing for extended Weyl systems of tubular type (see \cite{BWY21} or \cite{BMW25}).
Therefore, it turns out, that the hyperbolic cover of an extended Weyl system is the appropriate framework of our considerations (see Theorem~\ref{thm:AllgAussagehyperCover}).

We give two applications of Theorem~\ref{thm:AllgAussagehyperCover} and of the results in \cite{BWY21, BMW25}. In the context of representation theory of algebras, %\cite[Theorems~1.1 and 1.4]{BWY21} and 
Theorem~\ref{thm:AllgAussagehyperCover} is the last step that establishes an isomorphism between the poset of  thick subcategories that are generated by  exceptional sequences of a hereditary connected ext-finite abelian $k$-category with a tilting
object for an algebraically closed field $k$ of characteristic $0$ and the poset of elements in the  hyperbolic cover of an extended Weyl group  that are below a Coxeter transformation with respect to the absolute order (Theorem \ref{main_generalCor}). 
The second application concerns the theory of unimodal singularities (Theorem~\ref{thm:dist_bases}). In particular, we provide an answer to a question of Brieskorn \cite[Question 4]{Ebe19} for the classical monodromy operator in the case of hyperbolic singularities.

\end{abstract}

\maketitle

\tableofcontents

\section{Introduction }

Affine Coxeter systems are well studied, for instance group theoretical, topological or combinatorial properties of them are known (see for instance \cite{DL11, EE98, Hum90, McC15, PW17a}). In \cite{BWY21, BMW25},  and in this paper we propose the systematic study of a generalization of simply laced  affine Coxeter systems, more precisely the study of \defn{extended Weyl groups} and \defn{extended Weyl systems}.
An extended Weyl group is a group that is  generated by a certain set of reflections of a finite dimensional  $\RR$-space, which is equipped with a bilinear form (see Section~\ref{sec:Notation}). 
There are three different types of extended Weyl groups, those of \defn{domestic}, \defn{tubular} and of \defn{wild} type corresponding to the three possible types of the related bilinear form. The extended Weyl groups of domestic type are precisely the simply laced affine Coxeter groups (see Remark \ref{Rem:RootSystem}(c)), and those of tubular type are \defn{elliptic Weyl groups} (see \cite{BWY21, KL86, Sai85}).

Apart from the domestic case, extended Weyl groups are not Coxeter groups. Nevertheless, they share several properties with Coxeter groups. For instance, every extended Weyl group $W$ has a linear faithful representation, and is related to a generalized root system, which contains a ``\defn{simple system}'' of roots, and the set $S$ of reflections with respect to the roots in the simple system generates $W$. 

Therefore, each extended Weyl group is  defined uniquely by a diagram, the so-called \defn{extended Coxeter Dynkin diagram}, and contains special elements, the \defn{Coxeter transformations}, which are the analogues of Coxeter elements in a Coxeter group (see Section~\ref{sec:Notation}). 

Extended Weyl systems have already been partially studied in \cite{KL86, Len99, STW16} as well as in \cite{PW17b, BWY21}. The origin of Kluitmann's (and Brieskorn's) \cite{KL86} motivation to study these groups was his interest in simple elliptic singularities of types $E^{(1,1)}_{6},~E^{(1,1)}_{7}$ and $E^{(1,1)}_{8}$, as their monodromy groups are extended Weyl groups. In order to determine the distinguished bases of the corresponding Milnor lattices he considered an action of the braid group, the \defn{Hurwitz action} (see Section~\ref{sec:Notation}), on the set of reduced reflection factorizations of a Coxeter transformation.

The motivation of the other authors was in the representation theory of algebras, more precisely the interest in  the categories of coherent sheaves over a weighted projective line $\COH(\XX)$ in the sense of Geigle and Lenzing \cite{GL87}, or respectively in the derived equivalent categories of finite dimensional modules over canonical algebras (see the definition of canonical algebras in \cite{RI84}). One can attach to such a  category a generalized root system as well as a reflection group, which is an extended Weyl group (see \cite[Section~2]{BWY21}). Also in the understanding of the category $\COH(\XX)$ the Hurwitz action of the braid group on the set of reduced factorizations of a Coxeter transformation into a product of  reflections plays an important role, as we will explain in this paper.

We study extended Weyl groups of tubular type, i.e. elliptic Weyl groups, in \cite{BWY21, BMW25}.  In this paper, which is a continuation of \cite{BWY21}, we focus on the extended Weyl groups of wild and  of domestic type. 
We  describe their group structure (Subsection~\ref{subsec:StructureW}) and introduce a normal form for its elements (Subsection~\ref{subsec:NormalForm}).
This enables us to establish a uniform approach to all the extended Weyl systems by considering them as 
extended Coxeter systems.
Looijenga and van der Lek were the first to study extended Coxeter systems (see \cite{Looi80, Lek, Lek2}).
 Given a crystallographic Coxeter system it is an extension of the Coxeter group by its root lattice (see Section~\ref{sec:ExtCoxGrpRootLattice}). We show that every extended Weyl system of domestic or wild type is an extended Coxeter system of star 
type, i.e. the Dynkin diagram of the involved Coxeter group is a star (Proposition~\ref{Prop:CharExtendedWeylNotTubular}).

We extend our results on the Hurwitz transitivity for Coxeter elements in Coxeter systems \cite{BDSW14} and Coxeter transformations in elliptic Weyl systems  to Coxeter transformations in extended Coxeter systems $(\mathcal{W}, \mathcal{S})$ of star type (Theorem~\ref{thm:AllgAussage}). 
The next theorem is then a corollary of Theorem~\ref{thm:AllgAussage}.
Let $T = \bigcup_{w \in \mathcal{W}} w \mathcal{S} w^{-1}$ be the set of reflections in $W$.

\begin{theorem}\label{thm:main}
Let $(W,S)$ be an extended Weyl system of domestic or wild type with simple system $S$ of size $n$, set of reflections $T$, and  Coxeter transformation $c$. Then the Hurwitz action is transitive on the set of reduced reflection factorizations of $c$, namely on the set $\Red_{T}(c)=\left\lbrace (t_{1},\ldots,t_{n})\in T^{n}\mid t_{1}\cdots t_{n}=c \right\rbrace$.
\end{theorem}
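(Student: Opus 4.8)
The plan is to prove Theorem~\ref{thm:main} by induction on the rank $n=|S|$, the inductive step consisting of ``peeling off'' one simple reflection and passing to a parabolic subgroup of rank $n-1$. The base of the induction is covered by what is already known: if the relevant sub-diagram is an ordinary (finite) Dynkin diagram the associated parabolic is a finite Weyl group and Hurwitz transitivity is classical; the tubular (elliptic) case is settled in \cite{BWY20}; and the rank~$1$ together with the small domestic cases follow from \cite{BDSW14}. Since every proper parabolic sub-extended-Weyl-group is again of finite, domestic, tubular or wild type, it will suffice to carry out the inductive step for a group of domestic or wild type of rank $n$, assuming the theorem for all of its proper parabolics. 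Throughout I fix the Coxeter transformation $c$ and write $c=s_1\cdots s_n$ with the $s_i\in S$, choosing $s_1$ to be a source of the orientation defining $c$ (so that $s_1c=s_2\cdots s_n$ is a Coxeter transformation of $W'=\langle s_2,\dots,s_n\rangle$); the goal is then to show that every element of $\Red_T(c)$ is Hurwitz-equivalent to the standard factorization $(s_1,\dots,s_n)$.

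The inductive step I would split into two parts. The first is a \emph{peeling lemma}: every $(t_1,\dots,t_n)\in\Red_T(c)$ is Hurwitz-equivalent to a factorization in which the chosen simple reflection $s_1$ occurs as one of the entries. Granting this, one shuttles $s_1$ to the first position by the braid generators that move an entry leftward past its neighbours; this is a standard feature of the Hurwitz action and leaves the transported entry $s_1$ itself unchanged while only conjugating the entries it passes. One then peels off $s_1$: the remaining block is a reduced reflection factorization of $s_1c=s_2\cdots s_n$. The second part is a \emph{parabolic closure} statement asserting that these remaining $n-1$ reflections already lie in $W'$, so that the block belongs to $\Red_{T'}(s_1c)$ for the reflection set $T'$ of $W'$. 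The induction hypothesis applied to $W'$ then connects any two such blocks, and since the leading $s_1$ is fixed by the Hurwitz generators acting on positions $2,\dots,n$, the whole factorizations are connected; this yields transitivity.

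For the parabolic closure I would argue on the faithful linear representation of $W$: the roots of the $n-1$ peeled reflections are confined to the subspace spanned by the simple roots of $W'$, and the generalized root system is parabolically closed, in the sense that any root of $W$ lying in that subspace is already a root of $W'$; this identifies the peeled reflections as reflections of $W'$. The delicate point here is the length bookkeeping: unlike in the finite case, reflection length in $W$ is \emph{not} computed as $\dim\Mov$ of the linear action -- already for the translation Coxeter transformation in type $\widetilde A_1$ one has $\lt(c)=2$ while $\dim\Mov(c)=1$ -- so that the block is reduced of length $n-1$ must be read off from the normal form of Subsection~\ref{subsec:NormalForm} and the explicit group structure of Subsection~\ref{subsec:StructureW}, rather than from a naive Carter-type count.

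The crux, and the step I expect to be the main obstacle, is the peeling lemma in \emph{wild} type. In the domestic and tubular cases the ambient bilinear form is positive semidefinite, so the real roots form, modulo the radical, a finite system equipped with a height function and the usual exchange arguments of \cite{BDSW14} and \cite{BWY20} make $s_1$ reachable. In the wild case the form is indefinite, the real root system is infinite, and there is no global positivity or height to induct on. The hard part will therefore be to replace these tools by an intrinsic argument inside $W$: using the normal form to locate, in an arbitrary tuple of $\Red_T(c)$, a Hurwitz-accessible entry that is a conjugate of $s_1$, and then arranging, via further braid moves together with the rigidity of $c$ as a Coxeter transformation, that this entry can be turned into $s_1$ itself. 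Showing that such a simple-reflection entry is always reachable irrespective of the signature of the form is where I expect essentially all of the difficulty to lie.
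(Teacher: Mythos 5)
There is a genuine gap, in fact two. First, your inductive scaffolding is unsound at tubular parabolics. Peeling a leaf from a wild extended Coxeter--Dynkin diagram can produce a tubular one --- for instance arm lengths $(3,3,4)$ drop to $(3,3,3)$ --- and in tubular type the Hurwitz action is \emph{not} transitive on $\Red_{T}(c)$: the result of \cite{BWY20} that you cite as settling that base case concerns only the subset $\Redd(c)$ of generating factorizations, and those authors prove $\Redd(c)\neq\Red_{T}(c)$ in the tubular case. So the induction hypothesis you need (full transitivity on $\Red_{T'}(s_1c)$ in $W'$) is simply false when $W'$ is tubular, and nothing in your plan guarantees the peeled block is a generating factorization of $W'$. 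This is precisely the structural dichotomy (rank of the radical $1$ versus $2$) that forces the paper to treat the tubular case separately, and it cannot be routed around by choosing a different leaf in general. Second, you explicitly leave the peeling lemma in wild type open (``where I expect essentially all of the difficulty to lie''), and that is where the entire content of the theorem sits; a proof plan whose crux is deferred is not a proof.

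For comparison, the paper's argument is uniform and avoids induction and parabolic closure altogether. It projects a given $(t_1,\dots,t_m)\in\Red_{T}(c)$ along $\rho:W\to\overline{W}$ onto the \emph{finite} star-shaped Coxeter group, where the image is a length-$m$ factorization of $\overline{c}$ with $\ell_{\overline{S}}(\overline{c})=m-2$; Lemma~\ref{lem:same_refl} (from \cite{WY19}) then creates a duplicated pair $(t,t)$ at the end, and Theorem~\ref{thm:HurwitzInCox} (from \cite{BDSW14}) standardizes the reduced prefix. The kernel direction, which your sketch correctly identifies as resisting height/positivity arguments, is controlled not by locating a simple-reflection entry but by translation-vector bookkeeping via the Eichler--Siegel normal form (Lemmas~\ref{lem:translation_vector} and \ref{lem:zero_tran}): linear independence of $\{\alpha_1,\alpha'_1,\dots,\alpha'_{m-2}\}$ pins down the root $\alpha=\alpha_1-\beta$ attached to the duplicated pair, Corollary~\ref{Coro:conj} (transitivity of the star parabolic on $(\alpha_1+\Lambda)\cap\overline{\Phi}$) together with Lemma~\ref{lem:conj_hurw} conjugates that pair to have projection $s_1$, a second translation computation forces all other entries to equal the simple reflections, and Lemma~\ref{lem:trans_rk2} finishes in the infinite dihedral group $\langle s_1,s_{1^*}\rangle$. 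If you want to salvage your approach, you would have to both supply the wild-type peeling argument and restrict the induction so that it never descends through a tubular type --- the latter being impossible for some wild diagrams --- so I would recommend abandoning the induction in favor of the projection-plus-normal-form mechanism.
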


Let $\Redd(c)$ be the set of reduced reflection factorizations of a Coxeter transformation $c$ that generate $W$ (see Definition~\ref{Def:Generating}). Recall that 
in  an elliptic Weyl group, which is not of type $D_4^{(1,1)}$, the Hurwitz action is transitive on the set $\Redd(c)$ (see \cite{BWY21}). 
Furthermore, the authors proved that in this case $\Redd(c) \neq \text{Red}_{T}(c)$. In the non-elliptic case $\Redd(c) = \text{Red}_{T}(c)$ holds because of Theorem~\ref{thm:main}. 

In \cite{BMW25} the \defn{hyperbolic cover} of an elliptic Weyl system  is constructed, which was introduced by Saito in \cite{Sai74}. The hyperbolic cover is an extended Coxeter system of star type (see \cite{BMW25} or Theorem~\ref{Thm:Isomorphy}). 
In Section~\ref{} the hyperbolic cover of an extended Weyl system 
of domestic or of wild type is considered. We show that it
is isomorphic to the extended Weyl system itself (Proposition~\ref{Prop:hypcoverWild}). 
Hence we will also obtain  the next theorem as a corollary to 
Theorem~\ref{thm:AllgAussage}.

\begin{theorem}\label{thm:AllgAussagehyperCover}
Let $(\tilde{W}, \tilde{S})$ be the hyperbolic cover of an extemded Weyl system with set of reflections $\tilde{T}$, and a Coxeter transformation $\tilde{c}$. 
 Then the Hurwitz action is transitive on the set of reduced reflection factorizations $R_{\tilde{T}}(\tilde{c})$ of $\tilde{c}$.
\end{theorem}

In the last section we apply Theorem~\ref{thm:main} to the representation theory of hereditary categories. Every category of coherent sheaves over a weighted projective line is a hereditary category with a tilting object. The Grothendieck group of an ext-finite hereditary  abelian $k$-category for an algebraically closed field $k$ that has a tilting object is naturally equipped with a generalized root system and a reflection group. In particular, this is the case for the category of finitely generated $A$-modules for a finite dimensional  hereditary $k$-algebra $A$.

The reflection group attached to the category $\MOD(A)$, also called Weyl group, is a Coxeter group $W$ with set of simple reflections $S$,  which is obtained from a complete exceptional sequence. Such a sequence can be chosen such that it contains a complete set of representatives of the set of isomorphism classes of the finite dimensional simple $A$-modules (see \cite{Ri94, HK16}). As usual, the product of the elements of $S$ in arbitrary order is a Coxeter element of $(W,S)$.

According to Happel, the two just mentioned types of categories, that is, those of finitely generated modules of a finite dimensional hereditary $k-$algebra and those related to weighted projective lines,
are precisely up to derived equivalence the two types of hereditary connected ext-finite abelian $k-$categories with a tilting object (\cite[Theorem 3.1]{Hap01}).

Ingalls and Thomas \cite{IT09}, and Igusa, Schiffler and Thomas \cite{IS10} as well as Krause \cite{Kra12} and Hubery and Krause \cite{HK16} obtained a combinatorial description of the poset of thick subcategories generated by exceptional sequences of $\MOD(A)$ by providing an isomorphism (of posets) between this poset and a combinatorial object, the poset of noncrossing partitions in $W$. The latter poset consists of all the elements of $W$ that are in the interval $[1,c]$ with respect to some order relation on $W$, the so-called \defn{absolute order} (see \cite{BWY21}).

Besides our interest in the extended Weyl groups by themselves, our main goal of this and the previous paper \cite{BWY21} is to transfer the results for the module categories $\MOD(A)$ to the category of coherent sheaves over a weighted projective line, the other type of hereditary connected ext-finite abelian $k-$categories with a tilting object.

As a consequence of Theorem~\ref{thm:main}, \cite[Theorem 1.1]{BWY21} and \cite[Theorem~1.4]{BMW25}  we obtain the desired isomorphism between the poset of thick subcategories  of the category of coherent sheaves over a weighted projected line that are generated by an exceptional sequence, whose order relation is inclusion, and a subposet of the set of generalized non-crossing partitions $[\idop, c]$.
We denote by $\tilde{W}$ the hyperbolic cover of $W$ (see Proposition~\ref{Prop:hypcoverWild} and \cite{BMW25}).

\begin{theorem} \label{thm:mainRep}
Let $\XX$ be a weighted projective line over an algebraically closed field $k$ of characteristic zero, $\COH(\XX)$ the category of coherent sheaves over $\XX$, $\Phi$ the associated generalized root system, $\tilde{W}$ the hyperbolic  cover of the associated extended Weyl group $W$ and $c \in \tilde{W}$ a Coxeter transformation. Then there exists an isomorphism of posets
between
  \begin{itemize}
    \item the poset of thick subcategories of $\COH(\XX)$ that are  generated by an exceptional sequence, ordered by inclusion; and
    \item the  interval poset $[\idop, c]$, ordered by the absolute order.
           \end{itemize}
           \end{theorem}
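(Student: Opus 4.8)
The plan is to construct the bijection directly and then verify that it is well defined, bijective onto $[\idop,c]^{\text{gen}}$, and order preserving, following the template of Ingalls--Thomas and Igusa--Schiffler--Thomas \cite{IT09,IS10} and using Theorem~\ref{thm:main} together with \cite[Theorem 1.1]{BWY20} to control the combinatorics. First I would recall the dictionary between the categorical and the group-theoretic data supplied by \cite[Section~2]{BWY20}: the Grothendieck group $K_0(\COH(\XX))$, equipped with the symmetrized Euler form, realizes the generalized root system $\Phi$, every exceptional object $E$ has a class $[E]\in K_0(\COH(\XX))$ that is a real root, and the associated reflection $t_{[E]}$ lies in $T$. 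A complete exceptional sequence then maps to a reduced reflection factorization whose product is the Coxeter transformation $c$, and conversely every such factorization arises this way. With this dictionary in hand I would send a thick subcategory $\mathcal T$ generated by an exceptional sequence $(E_1,\dots,E_r)$ to the element $w_{\mathcal T}:=t_{[E_1]}\cdots t_{[E_r]}\in W$, which by construction admits a reduced factorization extending to a reduced factorization of $c$ and hence lies in $[\idop,c]$.

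The first substantive step is well-definedness: $w_{\mathcal T}$ must not depend on the chosen generating exceptional sequence. Here I would invoke the representation-theoretic fact that any two exceptional sequences generating the same thick subcategory are related by a sequence of mutations, together with the observation that a mutation of exceptional sequences induces precisely a Hurwitz move on the corresponding reflection factorization. Since the Hurwitz action leaves the product invariant, $w_{\mathcal T}$ is independent of all choices, and we obtain a well-defined map from the poset of thick subcategories generated by exceptional sequences into $[\idop,c]$. Because every reduced factorization of $w_{\mathcal T}$ extends to one of $c$, and since $\XX$ is of domestic or wild type so that $W$ is non-tubular and $\Redd(c)=\Red_{T}(c)$, the extending factorization automatically generates $W$; thus the map lands in $[\idop,c]^{\text{gen}}$ (which in fact coincides with $[\idop,c]$ in the present non-tubular setting).

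Next I would establish surjectivity and injectivity. For surjectivity, given $w\in[\idop,c]^{\text{gen}}$ I choose a reduced factorization of $w$ extending to a generating reduced factorization $(t_{1},\dots,t_{n})$ of $c$; by Theorem~\ref{thm:main} this factorization lies in the single Hurwitz orbit $\Red_{T}(c)$, which is realized by a complete exceptional sequence, so the initial segment corresponding to $w$ generates a thick subcategory mapping back to $w$. For injectivity I would recover $\mathcal T$ from $w_{\mathcal T}$ intrinsically, identifying it with the thick subcategory spanned by the exceptional objects whose classes occur in a factorization of $w_{\mathcal T}$, equivalently as a perpendicular subcategory determined by $w_{\mathcal T}$; two subcategories with the same image then coincide. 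Transitivity of the Hurwitz action is essential here, since it guarantees that all complete exceptional sequences refining a factorization of $w_{\mathcal T}$ are equivalent and hence determine the same subcategory unambiguously.

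Finally, for order-preservation I would show that an inclusion $\mathcal T_{1}\subseteq\mathcal T_{2}$ of such subcategories corresponds to $w_{\mathcal T_{1}}\le w_{\mathcal T_{2}}$ in the absolute order: an exceptional sequence generating $\mathcal T_{1}$ can be completed, within $\mathcal T_{2}$, to one generating $\mathcal T_{2}$, yielding a reduced factorization of $w_{\mathcal T_{1}}$ that extends to one of $w_{\mathcal T_{2}}$, which is exactly the interval relation. The main obstacle I anticipate is the well-definedness step, specifically matching the mutation orbits of exceptional sequences with the Hurwitz orbits of reflection factorizations and checking that this matching is compatible for arbitrary (not merely complete) exceptional sequences in $\COH(\XX)$; once Theorem~\ref{thm:main} supplies transitivity of the Hurwitz action on $\Red_{T}(c)$ and \cite[Theorem 1.1]{BWY20} provides the underlying correspondence of posets, the remaining verifications are formal.
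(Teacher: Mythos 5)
Your plan is mathematically sound and arrives at the right statement, but it takes a noticeably more laborious route than the paper. The paper's own proof of Theorem~\ref{thm:mainRep} is essentially a two-step citation: \cite[Theorem 1.1]{BWY20} is a \emph{conditional} version of the desired bijection, proved there with exactly the machinery you propose to rebuild (exceptional objects give real roots and reflections, complete exceptional sequences give reduced factorizations of $c$, mutation corresponds to Hurwitz moves, injectivity via perpendicular/thick-subcategory recovery, drawing on \cite{Hap01, Br07, Kra12, HK16}); the present paper then merely verifies its hypotheses (a) and (b) by observing that Theorem~\ref{thm:main} forces $\Redd(c)=\Red_T(c)$, hence $[\idop,c]^{\text{gen}}=[\idop,c]$, in the domestic and wild cases. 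You instead reconstruct the whole Ingalls--Thomas-style dictionary from scratch and only invoke \cite[Theorem 1.1]{BWY20} at the end. What the paper's route buys is brevity and a clean separation of labor: all categorical input is quarantined in the earlier paper, and the new contribution (Hurwitz transitivity) enters only as a hypothesis check. What your route would buy is a self-contained proof that makes visible exactly where transitivity is used (surjectivity, and the claim that every generating reduced factorization of $c$ is realized by a complete exceptional sequence). The one caution: your closing assessment that, given Theorem~\ref{thm:main} and \cite[Theorem 1.1]{BWY20}, ``the remaining verifications are formal'' undersells the two steps you flag yourself --- well-definedness (mutation-equivalence of arbitrary, not necessarily complete, exceptional sequences generating the same thick subcategory) and injectivity (recovering $\mathcal T$ from $w_{\mathcal T}$). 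These are precisely the nontrivial content of the conditional theorem you are re-proving, and in $\COH(\XX)$ they require the results of \cite{Kra12, Br07} (via \cite{Hap01}) rather than a routine check; if you use \cite[Theorem 1.1]{BWY20} as a black box, as the paper does, your first three paragraphs become redundant, and if you do not, those two steps need full proofs rather than sketches.
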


Every hyperbolic cover of a Coxeter system is isomorphic to that Coxeter
system by \cite[Proposition~7.4]{BMW25}.
Therefore, it follows from Happel's result  \cite[Theorem 3.1]{Hap01} and from \cite[Theorem 5.1]{Br07} that 
the next theorem is a consequence of 
\cite[Theorem 1.2]{HK16} and 
Theorem~\ref{thm:mainRep}.

\begin{theorem} \label{main_generalCor}
Let $\mathcal C$ be a hereditary connected ext-finite abelian $k-$category with a tilting object over an algebraically closed field $k$ of characteristic zero.
%which is not derived equivalent to $\COH(\XX)$ for a weighted projective line $\XX$ of tubular type. 
Let $\Phi$ be the associated 
  generalized root system, $\tilde{W}$  a hyperbolic cover 
  of the associated Weyl group $W$, and $c \in \tilde{W}$ a Coxeter transformation. Then there exists an order preserving bijection between
  
  \begin{itemize}
    \item the poset of thick subcategories of $\mathcal C$ that are generated by an exceptional sequence, ordered by inclusion; and
    \item  the interval poset $[\idop, c]$,
    ordered by the absolute order.
           \end{itemize}
\end{theorem}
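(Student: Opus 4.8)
The plan is to reduce the general statement to the two cases that have already been treated in this paper and in the literature, namely module categories of finite-dimensional hereditary algebras on the one hand and categories of coherent sheaves on non-tubular weighted projective lines on the other, and to glue them by means of Happel's classification. First I would invoke \cite[Theorem 3.1]{Hap01}: since $\mathcal{C}$ is hereditary, connected, ext-finite, abelian and admits a tilting object, it is derived equivalent either to $\MOD(A)$ for a finite-dimensional connected hereditary $k$-algebra $A$, or to $\COH(\XX)$ for a weighted projective line $\XX$. The hypothesis that $\mathcal{C}$ is not derived equivalent to a tubular $\COH(\XX)$ forces, in the second alternative, $\XX$ to be of domestic or wild type, so that Theorem~\ref{thm:mainRep} becomes available.

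Next I would check that every datum appearing in the statement is a derived invariant, so that it suffices to prove the bijection for one representative of each derived-equivalence class. On the combinatorial side, the Grothendieck group $K_0(\mathcal{C})$ together with its Euler form depends only on $\DB(\mathcal{C})$; hence the associated generalized root system $\Phi$, the extended Weyl group $W$, the Coxeter transformation $c$, and therefore the interval $[\idop, c]$ and its subposet $[\idop, c]^{\text{gen}}$ are preserved (up to canonical isomorphism) by any derived equivalence. On the categorical side, exceptional objects and exceptional sequences are defined purely through $\HOM$- and $\EXT$-vanishing conditions, i.e.\ in terms of the derived category alone; a triangulated equivalence $\DB(\mathcal{C}) \simeq \DB(\mathcal{C}')$ therefore carries exceptional sequences to exceptional sequences and restricts to an inclusion-preserving bijection between the thick triangulated subcategories they generate. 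Combining this with the correspondence between thick subcategories of a hereditary abelian category generated by exceptional sequences and the thick triangulated subcategories of its bounded derived category yields a poset isomorphism between the categorical posets attached to $\mathcal{C}$ and to its chosen model.

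With the reduction in place I would dispatch the two cases. If $\mathcal{C}$ is derived equivalent to $\MOD(A)$, then $W$ is a Coxeter group and $c$ a Coxeter element, and \cite{Kra12} furnishes an order-preserving bijection between the thick subcategories generated by exceptional sequences and the non-crossing partition lattice $[\idop, c]$. To reconcile this with the statement I must verify $[\idop, c]^{\text{gen}} = [\idop, c]$: the standard factorization of $c$ into simple reflections generates $W$, the subgroup generated by a reflection factorization is a Hurwitz invariant, and \cite{BDSW14} provides Hurwitz transitivity on the reduced reflection factorizations of a Coxeter element; consequently every reduced reflection factorization of $c$ generates $W$, and every $w \in [\idop, c]$ lies in $[\idop, c]^{\text{gen}}$. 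If instead $\mathcal{C}$ is derived equivalent to $\COH(\XX)$ with $\XX$ of domestic or wild type, then Theorem~\ref{thm:mainRep} applies verbatim.

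I expect the main obstacle to be the derived-invariance bookkeeping of the second paragraph rather than either of the two bijections, which are supplied ready-made by \cite{Kra12} and Theorem~\ref{thm:mainRep}. Concretely, the delicate points are that the passage from thick subcategories of the abelian category $\mathcal{C}$ to thick triangulated subcategories of $\DB(\mathcal{C})$ be compatible both with the equivalence and with the inclusion order, and that the identification of $W$ and $c$ across the equivalence be canonical enough that $[\idop, c]^{\text{gen}}$ transports without ambiguity. Once these compatibilities are in hand, the order-preserving bijection for $\mathcal{C}$ is obtained by transporting the bijection for its model along the derived equivalence.
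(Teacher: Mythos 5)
Your proposal is correct and takes essentially the same route as the paper, whose proof is precisely this assembly of citations: Happel's classification \cite[Theorem 3.1]{Hap01} to reduce to the two model cases, Krause's bijection \cite[Theorem 6.10]{Kra12} for the module case, and Theorem~\ref{thm:mainRep} (that is, \cite[Theorem 1.1]{BWY20} combined with Theorem~\ref{thm:main}) for the domestic or wild weighted projective line case. The derived-equivalence bookkeeping you identify as the main obstacle is exactly what the paper delegates to Br\"uning \cite[Theorem 5.1]{Br07}, and your Hurwitz-invariance argument via \cite{BDSW14} that $[\idop,c]^{\text{gen}}=[\idop,c]$ in the Coxeter-group case supplies the same step the paper leaves implicit.
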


As another application of  Theorem~\ref{thm:main} and \cite[Theorem 1.3]{BWY21} we
 obtain a characterization of the set of all distinguished bases of vanishing cycles of a simple elliptic and hyperbolic singularity (see Section \ref{subsec:singularity}) and hereby answering a question of Brieskorn \cite[Question 4]{Ebe19} in these cases.

\begin{theorem}\label{thm:dist_bases}
Let $f$ be a simple elliptic or hyperbolic singularity, $\Lambda^*$ the set of vanishing cycles of $f$, $M$ the corresponding Milnor lattice and $h_*$ the monodromy operator of $f$. Then the set of all distinguished bases of vanishing cycles of $f$ is given by the set
$$
\{ (\delta_1 , \ldots, \delta_n) \in (\Lambda^*)^n \mid \spanz(\delta_1, \ldots , \delta_n)=M, ~s_{\delta_1} \cdots s_{\delta_n}=h_* \}.
$$
\end{theorem}

As here the extended Weyl group of type $D_4^{(1,1)}$ is not appearing, the mistake found by Charly Schwabe in a previous version of \cite{BWY21} does not play a role.

On the way to prove Theorem~\ref{thm:main} we show some results on Coxeter groups, which are interesting by themselves.
For instance the following. In a finite Coxeter group of rank $n$ with parabolic subgroup $P$
of rank $n-1$ all the reflections $t \in T$ which fulfill the property that they generate together with $P$ the whole Coxeter group are conjugate by the elements in $P$ (see \cite[Theorem~1.5]{BW18}). Here we show this result for the Coxeter groups whose Coxeter diagram is a star and its parabolic subgroup obtained by removing the unique vertex of
degree at least three from the Coxeter diagram (see Corollary~\ref{Coro:conj}).

\subsection*{Structure of the paper}

In Section \ref{sec:Notation} we recall the necessary framework, which is the extended Weyl group, the corresponding root system and the Coxeter transformation, from \cite{BWY21}. Then in Section \ref{sec:NormalForm} the structure
of the extended Weyl groups of wild and domestic type is determined.
Section~\ref{Sec:HyperbolicCover} recalls the notion of a hyperbolic cover, and in Section~\ref{sec:ExtCoxGrpRootLattice}
we show that the hyperbolic covers of the extended Coxeter systems are precisely the extended Coxeter systems.
In Section~\ref{sec:CoxTrans} we will show that all the Coxeter transformations are conjugate and will recollect the reflection length of a Coxeter transformation from \cite{BMW25}. As a preparation of the main results we consider Coxeter systems of star type  in Section~\ref{sec:AuxProofMain}.
The results of the previous sections are used in 
Section~\ref{sec:ProofMain} to prove the main Theorems~\ref{thm:main} and \ref{thm:AllgAussagehyperCover} uniformly. The last section is dedicated to the applications of the main theorems.

\subsection*{Acknowledgments}
The authors thank Henning Krause for coming up with the interesting topic and its application in the representation theory of finite dimensional algebras, as well as Wolfgang Ebeling for the discussion on singularity theory. The first author acknowledges that this work was supported by the Deutsche Forschungsgemeinschaft (SFB-TRR 358/1 2023 - 491392403).

\section{Notation, terminology and basic facts}\label{sec:Notation}

In this section we introduce most of the notation that we use throughout this paper. In particular
we recall the definition of a generalized root system and define the extended Weyl groups. These definitions, which  can be partially found in \cite{STW16} or \cite{Sai85}, will be used throughout the paper.  Slightly unusually  we will call a graph a \defn{star}, if it is  a tree and if there is at most one vertex of degree larger than $2$ and all the other vertices have degree $1$ or $2$.

\subsection{The extended Coxeter-Dynkin diagram}

\begin{figure}[h!]
  \centering
  \begin{tikzpicture}[scale=3.4]

    \node (A6666) at (0,-4) [circle, draw, fill=black!50, inner sep=0pt, minimum width=4pt] {};
    \node (AA6666) at (0,-3.9) [] {\tiny{$(1,p_1)$}};
    \node (A666) at (0.5,-4) [circle, draw, fill=black!50, inner sep=0pt, minimum width=4pt] {};
    \node (AA666) at (0.5,-3.9) [] {\tiny{$(1,p_1-1)$}};
    \node (A66) at (0.75,-4) [] {$\ldots$};
    \node (A6) at (1,-4) [circle, draw, fill=black!50, inner sep=0pt, minimum width=4pt] {};
    \node (AA6) at (1,-3.9) [] {\tiny{$(1,2)$}};
    \node (B6) at (1.5,-4) [circle, draw, fill=black!50, inner sep=0pt, minimum width=4pt]{};
    \node (BB6) at (1.45,-3.9) [] {\tiny{$(1,1)$}};
    \node (C6) at (2,-4) [circle, draw, fill=black!50, inner sep=0pt, minimum width=4pt]{};
    \node (CC6) at (2,-4.1) [] {\tiny{$1$}};
    \node (D6) at (2.5,-4) [circle, draw, fill=black!50, inner sep=0pt, minimum width=4pt]{};
    \node (DD6) at (2.55,-3.9) [] {\tiny{$(r,1)$}};
    \node (E6) at (3,-4) [circle, draw, fill=black!50, inner sep=0pt, minimum width=4pt]{};
    \node (EE6) at (3,-3.9) [] {\tiny{$(r,2)$}};
    \node (E66) at (3.25,-4) [] {$\ldots$};
    \node (F6) at (3.5,-4) [circle, draw, fill=black!50, inner sep=0pt, minimum width=4pt]{};
    \node (FF6) at (3.5,-3.9) [] {\tiny{$(r,p_r-1)$}};
    \node (G6) at (4,-4) [circle, draw, fill=black!50, inner sep=0pt, minimum width=4pt]{};
    \node (GG6) at (4,-3.9) [] {\tiny{$(r,p_r)$}};
    % \node (I66) at (2,-3.5) [circle, fill=white, inner sep=0pt, minimum width=9pt]{};
    \node (I6) at (2,-3.5) [circle, draw, fill=black!50, inner sep=0pt, minimum width=4pt]{};
    \node (II6) at (2,-3.4) [] {\tiny{$1^*$}};
    \node (J6) at (2.35,-4.2) [circle, draw, fill=black!50, inner sep=0pt, minimum width=4pt]{};
    \node (JJ6) at (2.55,-4.15) []{\tiny{$(r-1,1)$}};
    \node (J66) at (2.7,-4.4) [circle, draw, fill=black!50, inner sep=0pt, minimum width=4pt]{};
    \node (JJ66) at (2.9,-4.35) []{\tiny{$(r-1,2)$}};
    \node (J66666) at (2.88,-4.5) []{$\ldots$};
    \node (J666) at (3.05,-4.6) [circle, draw, fill=black!50, inner sep=0pt, minimum width=4pt]{};
    \node (JJ666) at (3.4,-4.55) []{\tiny{$(r-1,p_{r-1}-1)$}};
    \node (J6666) at (3.4,-4.8) [circle, draw, fill=black!50, inner sep=0pt, minimum width=4pt]{};
    \node (JJ6666) at (3.75,-4.75) []{\tiny{$(r-1,p_{r-1})$}};
    \node (K6) at (1.65,-4.2) [circle, draw, fill=black!50, inner sep=0pt, minimum width=4pt]{};
    \node (KK6) at (1.55,-4.15) []{\tiny{$(2,1)$}};
    \node (K66) at (1.3,-4.4) [circle, draw, fill=black!50, inner sep=0pt, minimum width=4pt]{};
    \node (KK66) at (1.2,-4.35) []{\tiny{$(2,2)$}};
    \node (K66666) at (1.12,-4.5) []{\ldots};
    \node (K666) at (0.95,-4.6) [circle, draw, fill=black!50, inner sep=0pt, minimum width=4pt]{};
    \node (KK666) at (0.75,-4.55) []{\tiny{$(2,p_2-1)$}};
    \node (K6666) at (0.6,-4.8) [circle, draw, fill=black!50, inner sep=0pt, minimum width=4pt]{};
    \node (KK6666) at (0.4,-4.75) []{\tiny{$(2,p_2)$}};
    \node (L6) at (2,-4.5) []{\ldots};

    % Kanten

    \draw[-] (A666) to (A6666);
    \draw[-] (J666) to (J6666);
    \draw[-] (K666) to (K6666);
    \draw[-] (A6) to (B6);
    \draw[-] (B6) to (C6);
    \draw[-] (C6) to (D6);
    \draw[-] (D6) to (E6);
    %    \draw[-] (E6) to (F6);
    \draw[-] (F6) to (G6);
    %    \draw[-] (G6) to (H6);
    \draw[-] (I6) to (B6);
    \draw[-] (I6) to (D6);
    \draw[-] (I6) to (J6);
    \draw[-] (I6) to (K6);
    \draw[-] (J6) to (J66);
    \draw[-] (K6) to (K66);
    \draw[-] (C6) to (J6);
    \draw[-] (C6) to (K6);
    \draw[dashed] ([xshift=0.5]C6.north) to ([xshift=0.5]I6.south);
    \draw[dashed] ([xshift=-0.5]C6.north) to ([xshift=-0.5]I6.south);

  \end{tikzpicture}
  \caption{Extended Coxeter-Dynkin diagram}\label{def:GenCoxDiag}
\end{figure}

We start by introducing a diagram, the so-called \defn{extended Coxeter-Dynkin diagram}.
For a non-negative integer $r\in \NN_{0}$ and numbers $p_{i}\in \NN$ where $1 \leq i \leq r$ it is defined as 
given in Figure~\ref{def:GenCoxDiag}.

\subsection{The extended space}\label{sec:ExtendedSpace}

We attach analogously to the Tits representation for Coxeter groups (see \cite[Section 5.3]{Hum90}) a geometric datum to the extended Coxeter-Dynkin diagram.
This will lead to the definition of the extended Weyl group and of a Coxeter transformation.

Let $Q$ be the vertex set of the diagram in Figure \ref{def:GenCoxDiag}. Define $V$ to be the real vector space $V$ with basis $B:=\lbrace \alpha_{\nu}\mid \nu\in Q\rbrace$. We further define a symmetric bilinear form $(- \mid -)$ on $V$ by setting 

\begin{equation*}
  (\alpha_{\nu}\mid \alpha_{\omega}) =
  \begin{cases}
    2  & \nu,\omega \in Q \text{ are connected by a dotted double bound or }\nu=\omega, \\
    0  & \nu,\omega \in Q \text{ are disconnected,}\\          
    -1 & \nu,\omega \in Q \text{ are connected by a single edge},
  \end{cases}
\end{equation*}
and extending bilinearly to $V$. We call $(V,B, (-\mid -))$ an \defn{extended space}.

The following definition is a generalization of the notation of a root system in the theory of finite Coxeter systems and can be found in \cite[Section 1.2]{Sai85}.
As usual, we define for $\alpha \in V$ non-isotropic, that is $(\alpha\mid \alpha) \neq 0$,  the \defn{reflection}
$$s_{\alpha}(v):=v-\frac{2(\alpha \mid v)}{(\alpha \mid \alpha)}\alpha ~~\mbox{for all }~v\in  V.$$

\begin{definition}\label{def:root_system}
A non-empty subset $\Phi\subseteq V$ of non-isotropic vectors is called \defn{generalized root system} if the following properties are satisfied
\begin{enumerate}
\item[(a)] $\spanr(\Phi)= V$,
\item[(b)] %$(\alpha \mid \alpha)\neq 0$ for all $\alpha \in \Phi$,
%\item[(c)] 
$s_{\alpha}(\Phi) \subseteq \Phi$ for all $\alpha\in \Phi$, and
\item[(c)] $\Phi$ is \defn{crystallographic}, i.e.
$\frac{2(\alpha \mid \beta)}{(\beta \mid \beta)} \in \mathbb{Z}$ for all $\alpha,\beta \in \Phi$.
%$\frac{2(\alpha,\beta)}{(\alpha,\alpha)}\in \mathbbm{Z}$ for all $\alpha,\beta\in \Phi$.
\end{enumerate}

The elements in a generalized root system are called \defn{roots}.
A generalized root system $\Phi$ is called \defn{simply laced} if $(\alpha \mid \alpha)=2$ for all $\alpha\in \Phi$, it is called \defn{reduced} if $r\alpha\in \Phi$ ($r\in \RR$) implies $r=\pm 1$ for all $\alpha\in \Phi$ and it is called \defn{irreducible} if there do not exist generalized root systems $\Phi_{1},\Phi_{2}$ such that $\Phi=\Phi_{1} \cup \Phi_{2}$ as well as $\Phi_{1} \bot \Phi_{2}$.

We call a non-empty subset $\Psi$ of a generalized root system $\Phi$ a \defn{root subsystem} if $\Psi$ is a generalized root system in $\spanr(\Psi)$. 
%It therefore suffices that condition $(c)$ is satisfied.

Further two generalized root systems $\Phi_{1}$ and $\Phi_{2}$ are called \defn{isomorphic} if there exists a linear isometry between the corresponding ambient spaces that sends $\Phi_{1}$ to $\Phi_{2}$.
\end{definition}

Notice that in \cite[Section 2.1]{STW16} a simply laced generalized root system consists of more information, namely of a lattice, a symmetric bilinear form, a set of roots and an element called Coxeter transformation.
 
 A direct calculation yields the signature of the symmetric bilinear form $(-\mid -)$ in an 
extended space:

\begin{Lemma}\label{lem:signature}
Let $(V, B, (-\mid-))$ be an extended space.  The signature of $(-\mid -)$ is $(|B|-2,1,1)$, $(|B|-2,0,2)$ or $(|B|-1,0,1)$ where the first, the second and the third entries are the geometric dimensions of the positive, the negative and the  zero eigenvalues of the form,
respectively.
\end{Lemma}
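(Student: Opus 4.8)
The plan is to exhibit the radical of $(-,-)$ explicitly and thereby reduce the signature computation to that of a single star-shaped Coxeter--Dynkin diagram, which I then diagonalise by hand.

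First I would record the Gram matrix of $(-,-)$ in the basis $B$: every diagonal entry equals $2$, each single edge contributes an off-diagonal $-1$, and the unique dotted double bond between the central vertex $1$ and the vertex $1^{*}$ gives $(\alpha_{1},\alpha_{1^{*}})=2$. Since $1$ and $1^{*}$ are joined to exactly the same neighbours, namely the arm-starts $\alpha_{(i,1)}$ for $1\le i\le r$, and to each other by the double bond, the vector $\delta:=\alpha_{1}-\alpha_{1^{*}}$ satisfies $(\delta,\alpha_{(i,1)})=(-1)-(-1)=0$, $(\delta,\alpha_{(i,j)})=0$ for $j\ge 2$, and $(\delta,\alpha_{1})=(\delta,\alpha_{1^{*}})=2-2=0$; in particular $(\delta,\delta)=0$, so $\delta$ lies in the radical of $(-,-)$. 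Letting $U$ denote the span of all basis vectors except $\alpha_{1^{*}}$, a dimension count gives $V=U\oplus\RR\delta$, and since $\delta\perp U$ this is an orthogonal decomposition on which $(-,-)$ vanishes along $\RR\delta$. Hence the signature of $V$ equals that of $U$ with one extra zero appended, and it suffices to determine the inertia of the restriction of $(-,-)$ to $U$.

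The form on $U$ is exactly the Tits form of the star-shaped diagram with central vertex $1$ and $r$ arms, the $i$-th arm being the path $\alpha_{(i,1)}-\cdots-\alpha_{(i,p_{i})}$. I would compute its inertia by repeated completion of the square (a congruence, so Sylvester's law of inertia applies), eliminating basis vectors from the tips of the arms inwards. Eliminating a current leaf with pivot $a>0$ joined to a single not-yet-eliminated neighbour $u$ through the cross term $-2x_{v}x_{u}$ produces one positive square and lowers the diagonal coefficient of $u$ by $1/a$. Along one arm from the tip the pivots satisfy $d_{k}=2-1/d_{k-1}$ with $d_{1}=2$, hence $d_{k}=(k+1)/k>0$; thus every arm vertex yields a positive eigenvalue, and after the whole $i$-th arm has been processed the diagonal coefficient of the central vertex has dropped by $1/d_{(i,1)}=p_{i}/(p_{i}+1)$. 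When all arms have been eliminated the central vertex is isolated with pivot
\[
  d_{c}=2-\sum_{i=1}^{r}\frac{p_{i}}{p_{i}+1}.
\]

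Finally I would split into three cases according to the sign of $d_{c}$. As all $\sum_{i}p_{i}=|B|-2$ arm vertices contribute positive eigenvalues, $U$ has signature $(|B|-1,0,0)$, $(|B|-2,0,1)$ or $(|B|-2,1,0)$ according as $d_{c}>0$, $d_{c}=0$ or $d_{c}<0$; re-adding the zero eigenvalue carried by $\delta$ yields precisely $(|B|-1,0,1)$, $(|B|-2,0,2)$ and $(|B|-2,1,1)$, the three asserted values (corresponding respectively to the domestic, tubular and wild types). The one point requiring care is that $U$ can have \emph{at most one} non-positive eigenvalue; this is exactly what the star shape forces, since the diagram has a single branch vertex, so in the leaf-to-centre elimination every vertex other than the centre is removed with a strictly positive pivot and only $d_{c}$ can fail to be positive. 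A small sanity check (for instance the $r=0$ diagram, or a star $K_{1,k}$ with arms of length one) confirms the trichotomy.
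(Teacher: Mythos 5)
Your proof is correct, and it in fact supplies the argument the paper omits: for Lemma~\ref{lem:signature} the paper offers no written proof at all, only the phrase ``a direct calculation yields'' together with a remark pointing to tilting-theoretic derivations (\cite{Len99}, \cite{HTT07}, \cite{Ebe07}); your computation is precisely such a direct calculation made explicit. Both of your steps check out: the vector $\delta=\alpha_{1}-\alpha_{1^{*}}$ is indeed radical because $1$ and $1^{*}$ have identical neighbourhoods in Figure~\ref{def:GenCoxDiag} and $(\alpha_1,\alpha_{1^*})=2$ (this matches Proposition~\ref{def:basic}(b), where $R=\spanr(a)$ with $a=\alpha_{1^*}-\alpha_1$ in the non-tubular cases), and the leaf-to-centre elimination on the residual star is a genuine congruence in which each eliminated vertex has exactly one remaining neighbour, so no new cross terms arise and the pivot recursion $d_k=2-1/d_{k-1}$, $d_k=(k+1)/k>0$, is valid. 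Your final pivot $d_c=2-\sum_{i=1}^{r}p_i/(p_i+1)=2-r+\sum_{i=1}^{r}1/(p_i+1)$ correctly governs the trichotomy and recovers the finite/affine/indefinite classification of the star diagram, consistent with the list of realizable signatures in Proposition~\ref{def:basic}(a); your closing observation that the star shape forces at most one non-positive pivot is exactly the point that makes the three listed signatures the only possibilities. Compared with the tilting-theory route cited in the paper's remark, your argument is more elementary and self-contained, at the price of being specific to this family of diagrams rather than following from general structure theory.
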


\begin{remark}
$\phantom{0}$
\begin{itemize}
\item[(a)] Notice that not all the possible triples are the signature of an extended space (see 
Proposition~\ref{def:basic} (a)).
\item[(b)]  Lemma \ref{lem:signature} can also be deduced for $r\geq 3$ in terms of the tilting theory. In fact, it is a combination of \cite[Proposition 18.8]{Len99} and \cite[Section 10]{HTT07}. For the so-called $T_{p,q,r}$ cases it is also proven in \cite[Chapter 5.11]{Ebe07}.
\end{itemize}
\end{remark}

\subsection{The extended Weyl system}

\begin{propdef} \label{def:basic}
Let $(V, B, (-\mid -))$ be an extended space.
\begin{enumerate}
\item[(a)] The group $W:=\langle s_{\alpha}\mid \alpha\in B\rangle$ is called \defn{extended Weyl group}. We call it \defn{wild},  \defn{tubular} or \defn{domestic} if the signature of $(-\mid -)$ is $(|B|-2,1,1)$, $(|B|-2,0,2)$ or $(|B|-1,0,1)$, respectively. 
Further we denote by $R$ the radical of the form $(-\mid -)$.
It is easy to see that the signatures that occur are exactly the following:
\begin{align*}
\text{tubular: }&(m,0,2) \text{ for } m=4,6,7,8\\
\text{domestic: }&(m,0,1) \text{ for } m\in \NN \\
\text{wild: }&(m,1,1) \text{ for } m\geq 6
\end{align*}

\item[(b)] 
If $W$ is of  domestic or wild type, then dim $R = 1$ and $R = \spanr(a)$ where
$a:=\alpha_{1^{*}}-\alpha_{1}$.
If $W$ is of tubular type, then $\spanr(a) \subset R$ and dim$_\RR(R) = 2$.

\item[(c)] The set $S:=\lbrace s_{\alpha}\mid \alpha \in B \rbrace$ is called \defn{simple system} and its elements  \defn{simple reflections}. We call $(W,S)$ an \defn{extended Weyl system}.
The set $T:=\bigcup_{w\in W}wSw^{-1}$ is  the \defn{set of reflections} for $(W,S)$.

\item[(d)] Let $\Phi\subseteq V$ be the minimal set that contains $B$ and is closed under the action of $W$, i.e. $w(\beta)\in \Phi$ for all $w\in W$ and $\beta \in \Phi$. Then $\Phi$ is a reduced 
and irreducible generalized root system  and $\Phi=W(B)$. We call $\Phi$ \defn{extended root system} and the elements of $B$ are called \defn{simple roots}.
\item[(e)] Let $\overline{\Phi}\subseteq V$ be the minimal set that contains $\overline{B}:=B\setminus \lbrace \alpha_{1^{*}} \rbrace$ and is closed under the action of $\overline{W}=\langle s_{\alpha}\mid \alpha \in \overline{B}  \rangle$.
Then $\overline{\Phi}$ is a 
simply laced 
generalized root system, which is irreducible and reduced, and it holds $\overline{\Phi}=\overline{W}(\overline{B})$. We call the set $\overline{\Phi}$ the \defn{projected root system}. By construction,
$(\overline{W},\overline{S})$ is a Coxeter system, where $\overline{S}=S\setminus \lbrace s_{1^{*}}\rbrace$. We call it the \defn{projected Coxeter system}. 

\item[(f)] As $(\overline{W}, \overline{S})$ is a Coxeter system whose Coxeter diagram is a  tree and whose simple roots are all of the same length, the action of $\overline{W} \subseteq W$
is transitive on $\overline{\Phi}$.  This also implies that all the roots in $ \overline{\Phi}$, and therefore also in $ \Phi$, are of the same length.

\item[(g)] The elements $c:=\left(\prod_{\alpha \in B\setminus \lbrace \alpha_{1},\alpha_{1^{*}}\rbrace}s_{\alpha}\right)\cdot s_{\alpha_{1}}s_{\alpha_{1^{*}}}$ are called \defn{Coxeter transformations} where we take the first $|B|-2$ factors 
in an arbitrary order.
\end{enumerate}
\end{propdef}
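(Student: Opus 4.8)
\emph{Parts (c) and (g) are definitions and need no proof; the plan is to establish the assertions in (a), (b), (d), (e) and (f).} The point of departure is a one-line computation with the Gram matrix. In the diagram the vertices $1$ and $1^*$ have exactly the same neighbours --- each is joined by a single edge to every arm-root $\alpha_{(i,1)}$ --- and are joined to one another by a dotted double bond, so the rows of the Gram matrix indexed by $\alpha_1$ and $\alpha_{1^*}$ coincide. Hence $a=\alpha_{1^*}-\alpha_1$ is orthogonal to all of $B$, that is $a\in R$ and $(a,a)=0$; in particular $\spanr(a)\subseteq R$ for every type, which is the first half of (b).

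To determine the signature I would pass to the \emph{projected} space $\overline V=\spanr(\overline B)$. Because $a\in R$ and $\alpha_{1^*}\equiv\alpha_1\pmod{\RR a}$, the composite $\overline V\hookrightarrow V\twoheadrightarrow V/\RR a$ is an isometry onto $V/\RR a$ for the induced form, so the signature of $(V,(-,-))$ is that of $\overline V$ with the zero-part raised by one. The projected diagram is the simply laced star with centre $1$ and arms of lengths $p_1,\dots,p_r$, and by the classical trichotomy for star-shaped Coxeter graphs its form is positive definite (finite type $A,D,E$), positive semidefinite of corank one (affine type $\tilde D_4,\tilde E_6,\tilde E_7,\tilde E_8$), or indefinite; combined with Lemma~\ref{lem:signature} the indefinite case carries exactly one negative square. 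These three possibilities give precisely the domestic signature $(|B|-1,0,1)$, the tubular signature $(|B|-2,0,2)$ and the wild signature $(|B|-2,1,1)$, and enumerating the finite and affine stars yields the claimed ranges of $m$ in (a). Claim (b) then drops out: in the domestic and wild cases the corank is $1$, so $R=\spanr(a)$, whereas in the tubular case the corank is $2$, so $\spanr(a)\subsetneq R$. I expect this reduction, together with the bookkeeping of which stars realise which values of $m$, to be the main obstacle.

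For (d) I would verify the four axioms of Definition~\ref{def:root_system} for $\Phi=W(B)$ directly. As $(\alpha_\nu,\alpha_\omega)\in\{2,0,-1\}\subseteq\ZZ$, each simple reflection preserves the lattice $\ZZ B$, hence so does $W$, giving $\Phi\subseteq\ZZ B$; and since $W$ acts by isometries with $(\alpha_\nu,\alpha_\nu)=2$, every root satisfies $(\alpha,\alpha)=2$. This yields axioms (a)--(b) ($B\subseteq\Phi$ spans), the integrality axiom (d) because $\frac{2(\alpha,\beta)}{(\alpha,\alpha)}=(\alpha,\beta)\in\ZZ$ on $\ZZ B$, and the $W$-closure axiom (c) because $s_{w\beta}=ws_\beta w^{-1}\in W$. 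The system is \emph{reduced} at once ($r\alpha\in\Phi$ forces $2r^2=2$, so $r=\pm1$) and \emph{irreducible} because an orthogonal splitting would partition the connected diagram into mutually orthogonal blocks: any block containing all of $B$ spans $V$, forcing the complementary roots into $R$, which contains no root. Part (e) is the identical verification for $\overline B$ and the connected tree $\overline\Gamma$; the extra claim that $(\overline W,\overline S)$ is a Coxeter system follows from the standard geometric representation of Coxeter groups \cite{Hum90}, since for a tree all $(\alpha_i,\alpha_j)\in\{0,-1\}$ force the orders $m_{ij}\in\{2,3\}$ and faithfulness of that representation rules out relations beyond the braid relations.

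Finally, for (f) the transitivity of $\overline W$ on $\overline\Phi=\overline W(\overline B)$ reduces to showing that $\overline B$ lies in one orbit. For adjacent $i,j$ one computes $s_{\alpha_i}s_{\alpha_j}(\alpha_i)=\alpha_j$ (using $(\alpha_i,\alpha_j)=-1$ and $(\alpha_i,\alpha_i)=2$), so adjacency conjugates simple roots, and connectedness of $\overline\Gamma$ puts all of $\overline B$, hence all of $\overline\Phi$, into a single $\overline W$-orbit. As $\overline W$ acts isometrically, every root of $\overline\Phi$ has $(\alpha,\alpha)=2$; and because $B$ consists of roots of squared length $2$ preserved by $W$, the same length holds throughout $\Phi$.
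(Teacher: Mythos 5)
Your route is the natural one, and for most of the statement it is sound: the paper itself offers no proof of this Proposition-and-Definition (the signature claim is delegated to Lemma~\ref{lem:signature}, itself stated as ``a direct calculation'', with literature pointers), so your write-up fills in exactly what is left implicit. In particular, the observation that the $\alpha_1$- and $\alpha_{1^*}$-rows of the Gram matrix coincide (whence $a\in R$, $(a,a)=0$), the isometry $\overline V\cong V/\RR a$ shifting the signature by one zero, the axiom-by-axiom verification of (d) inside the lattice $\ZZ B$, the irreducibility argument via connectedness of the diagram (a root orthogonal to all of $B$ lies in $R$ and would be isotropic), the Coxeter-system claim in (e) via faithfulness of the geometric representation, and the computation $s_{\alpha_i}s_{\alpha_j}(\alpha_i)=\alpha_j$ along edges for (f) are all correct.

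The genuine gap is the sentence ``enumerating the finite and affine stars yields the claimed ranges of $m$ in (a)''. You never perform that enumeration, and it is precisely the step that does not come out as asserted. Take $r=5$ and $p_1=\dots=p_5=1$ in Figure~\ref{def:GenCoxDiag}. The projected diagram is the star $K_{1,5}$, whose Gram matrix $2I-A$ has eigenvalues $2\pm\sqrt{5}$ and $2$ with multiplicity four, hence signature $(5,1,0)$ on $\overline V$; adding the isotropic line $\RR a$ gives signature $(5,1,1)$ on $V$, i.e.\ a wild case with $m=|B|-2=5$. The same happens for arm lengths $(1,1,1,2)$, or for any star properly containing $\tilde D_4$ with $\sum p_i=5$. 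So a careful enumeration under the stated hypotheses ($r\in\NN_0$, $p_i\in\NN$) yields wild signatures $(m,1,1)$ for all $m\geq 5$, not $m\geq 6$: either the statement carries an implicit restriction on the admissible diagrams that you would have to identify and impose, or the stated bound has an off-by-one that your proof cannot simply ratify. As written, your claim that the enumeration ``yields the claimed ranges'' would fail at exactly this point; note that the same enumeration does confirm the tubular range $m\in\{4,6,7,8\}$ (the only affine simply laced stars are $\tilde D_4,\tilde E_6,\tilde E_7,\tilde E_8$) and the domestic range $m\in\NN$ (via $A$-chains, with $r=0$ giving $m=1$).

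A minor point of logical hygiene: you invoke Lemma~\ref{lem:signature} to conclude that the indefinite case carries exactly one negative square. That is admissible in the paper's ordering, but it is also avoidable: deleting the centre of the star leaves a disjoint union of $A$-chains, so the arm roots span a positive definite hyperplane of $\overline V$, and hence negative index plus corank of the form on $\overline V$ is at most one. Including that one line would make your trichotomy self-contained and would in fact reprove Lemma~\ref{lem:signature} along the way.
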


\begin{remark}\label{Rem:RootSystem}
$\phantom{4}$
\begin{itemize}
\item[(a)] 
Extended Weyl groups of tubular type are also called \defn{tubular elliptic} (see \cite{BWY21}).
\item[(b)]
By definition the root system $\overline{\Phi}$ is a root subsystem of $\Phi$, and $\overline{W}$ is a subgroup of $W$. Let $p$ be the natural projection of $V$ onto $\overline{V}:= V/R$. Then $p(\Phi)$ is 
isomorphic to $\overline{\Phi}$, and $p$ restricted to $\overline{\Phi}$ is injective. If it is convenient we will abbreviate $p(v)$ by $\overline{v}$ for $v \in V$.  In particular, we have $\alpha = \overline{\alpha}$ for $\alpha \in \overline{\Phi}$ in our setting.
In the following we will also identify $\overline{V}$ with the $\RR$-span of $\overline{B}$ in $V$.
\item[(c)] Notice that the extended Weyl groups $W$ of domestic type are precisely the affine simply laced irreducible Coxeter groups. We claim: if $W$ is an extended Weyl group of domestic type then there is $Q \subseteq T$ such that $(W,Q)$ is an affine Coxeter system of type $\widetilde{X}$ where $X$ is the type of the Coxeter system $(\overline{W}, \overline{S})$. Further the set of reflections in the Coxeter system $(W,Q)$ coincides with $T$.

This can be seen as follows. If $W$ is of domestic type then $(-\mid -)$ restricted to $\spanr(\overline{B})$ is positive definite, let us say $(\overline{W}, \overline{S})$ is of type $X$. Let $\widetilde{\alpha}$ be the highest root in the positive subsystem of type $X$ containing $\overline{B}$. By \ref{def:basic} (f) there is $w \in \overline{W}$ that maps $\alpha_1$ onto $-\widetilde{\alpha}$. This shows that $s_{-\widetilde{\alpha}+a} \in W$.
Then, as $\overline{S}$ generates $\overline{W}$, the group $W$ is generated by $Q:= \overline{S} \cup \{s_{-\widetilde{\alpha}+a}\} \subseteq T$. As the diagram for $Q$ is an affine Coxeter diagram, $(W,Q)$ is an affine Coxeter group with simple system $Q$ (see \cite[Section~6.5]{Hum90}).
Let $\widetilde{T}$ be the set of reflections in $(W,Q)$. Then 
$\widetilde{T} = \bigcup_{w\in W}wQw^{-1} \subseteq T$.
As $w^{-1}(\widetilde{\alpha}) = \alpha_1 $ it is
$s_{\alpha_{1^{*}}}$ contained in $\widetilde{T}$ and therefore we also have
$T \subseteq \widetilde{T}$.

On the other hand, if $(W,Q)$ is an affine simply laced irreducible Coxeter system where $Q$ is chosen as in the last paragraph, then we obtain an extended Weyl system $(W,S)$ by a similiar procedure as just described, where we choose $\alpha_1$ as the root related to the unique vertex of degree three, if $W$ is not of type $\widetilde{A}_n$, and else $\alpha_1$ can be chosen as any vertex of the Coxeter diagram of $(W,Q)$.
\end{itemize}
\end{remark}

\subsection{The Hurwitz action}
\medskip
\begin{definition}\label{def:hurwitz}
Let $G$ be an arbitrary group and $T \subseteq G$ a subset which is closed under conjugation. The braid group on $n$ strands, that is, the group
$$
\mathcal{B}_{n}:=\langle \sigma_{1}, \ldots , \sigma_{n-1} \mid \sigma_{i}\sigma_{j}=\sigma_{j}\sigma_{i} \text{ for }|i-j|>1, ~\sigma_{i}\sigma_{i+1}\sigma_{i}=\sigma_{i+1}\sigma_{i}\sigma_{i+1} \text{ for } 1\leq i \leq n-2 \rangle
$$ 
acts on $T^{n}$ as follows:
\begin{align*}
  \sigma_i (g_1 ,\ldots , g_n )      & = (g_1 ,\ldots , g_{i-1} , \hspace*{5pt} \phantom
  {g_i}g_{i+1}\phantom{g_i^{-1}}, \hspace*{5pt} g_{i+1}^{-1}g_ig_{i+1}, \hspace*{5pt} g_{i+2} ,
  \ldots , g_n),                                                                                      \\
  \sigma_i^{-1} (g_1 ,\ldots , g_n ) & = (g_1 ,\ldots , g_{i-1} , \hspace*{5pt} g_i g_{i+1} g_i^{-1},
  \hspace*{5pt} \phantom{g_{i+1}}g_i\phantom{g_{i+1}^{-1}}, \hspace*{5pt} g_{i+2} ,
  \ldots , g_n).
\end{align*}
This action is called \defn{Hurwitz action}.
\end{definition}
In this paper, we consider the Hurwitz action on $T^{|S|}$, where $W$ is an extended Weyl group, $S$ its simple system and $T$ its set of reflections.

\section{A normal form for an extended Weyl group} \label{sec:NormalForm}

The structure of the extended Weyl groups of tubular type, that is of the \defn{elliptic Weyl groups}, has been determined  in \cite{BWY21}. In this section we study the extended Weyl systems $(W,S)$ of wild and domestic type. First we determine the related generalized root system $\Phi$. Then we describe the elements of $W$ in terms of the root lattice of $\Phi$ by using the so called Eichler-Siegel transformation (see \cite{BWY21, Sai85}). This enables us to investigate the structure of the group $W$ and to introduce a normal form for the elements of $W$. Some of these results can also be found in \cite{STW16}.

Recall that for extended Weyl groups of domestic or wild type we have  $\dim(R) = 1$ and $R = \spanr(a)$, where $a =\alpha_{1^{*}}-\alpha_{1}.$

\subsection{The extended root system and its reflections}

\begin{Lemma}\label{lem:root_system_I2}
The following hold:
\begin{itemize}
\item[(a)] The smallest generalized root system that contains $\lbrace \alpha_{1},\alpha_{1^{*}}\rbrace$ is $\Phi':=\lbrace \pm \alpha_{1}+ka\mid k\in \ZZ\rbrace$.
\item[(b)] The generalized root system attached to $W$ is $\Phi=\lbrace \overline{\alpha}+ka\mid \overline{\alpha}\in \overline{\Phi},k\in \ZZ\rbrace$.
\end{itemize}
\end{Lemma}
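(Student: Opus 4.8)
The plan is to use throughout that $a=\alpha_{1^{*}}-\alpha_{1}$ lies in the radical $R$ (Proposition and Definition~\ref{def:basic}(b)), so that $(a,v)=0$ for every $v\in V$. Three consequences will be used repeatedly: every vector of the form $\pm\alpha_{1}+ka$ or $\overline{\alpha}+ka$ with $\overline{\alpha}\in\overline{\Phi}$ has square length $(\cdot,\cdot)=2$; every reflection fixes $a$; and $(\overline{\alpha}+ka,\overline{\beta}+la)=(\overline{\alpha},\overline{\beta})$.

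For part (a), first I would verify that $\Phi'=\{\pm\alpha_{1}+ka\mid k\in\ZZ\}$ is a generalized root system inside the (form-degenerate) plane $\spanr(\alpha_{1},a)$. Axiom (b) is immediate since each such vector has square length $2$; for (c) and (d) a direct computation gives, for $\epsilon,\delta\in\{\pm1\}$,
\[
s_{\epsilon\alpha_{1}+ka}(\delta\alpha_{1}+la)=-\delta\alpha_{1}+(l-2\epsilon\delta k)a,
\]
which lies in $\Phi'$ and shows $\tfrac{2(\alpha,\beta)}{(\alpha,\alpha)}=(\alpha,\beta)\in\ZZ$. Minimality then follows by checking that the reflection orbit of the pair $\{\alpha_{1},\alpha_{1^{*}}\}=\{\alpha_{1},\alpha_{1}+a\}$ already exhausts $\Phi'$: the element $s_{\alpha_{1^{*}}}s_{\alpha_{1}}$ sends $\alpha_{1}+ka$ to $\alpha_{1}+(k+2)a$, so the translates of $\alpha_{1}$ and of $\alpha_{1}+a$ produce every $\pm\alpha_{1}+ka$. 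Since any generalized root system containing $\{\alpha_{1},\alpha_{1^{*}}\}$ is reflection-closed it must contain $\Phi'$, and as $\Phi'$ is itself such a system, it is the smallest one.

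For part (b) set $\Phi''=\{\overline{\alpha}+ka\mid\overline{\alpha}\in\overline{\Phi},\;k\in\ZZ\}$. I would first prove $\Phi\subseteq\Phi''$ by showing that $\Phi''$ contains $B$ and is $W$-invariant, and then invoke the minimality of $\Phi=W(B)$ from \ref{def:basic}(d). Containment of $B$ is clear: $\alpha_{\nu}=\overline{\alpha_{\nu}}+0\cdot a$ for $\nu\neq 1^{*}$, and $\alpha_{1^{*}}=\alpha_{1}+a=\overline{\alpha_{1}}+a$. Invariance under a generator $s_{\alpha_{\nu}}$ with $\nu\neq1^{*}$ is immediate, because $s_{\alpha_{\nu}}\in\overline{W}$ fixes $a$ and preserves $\overline{\Phi}$. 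The one step requiring care is invariance under $s_{\alpha_{1^{*}}}$: using $(\alpha_{1^{*}},\overline{\alpha}+ka)=(\alpha_{1},\overline{\alpha})$ one finds
\[
s_{\alpha_{1^{*}}}(\overline{\alpha}+ka)=s_{\alpha_{1}}(\overline{\alpha})+\bigl(k-(\alpha_{1},\overline{\alpha})\bigr)a,
\]
which lies in $\Phi''$ since $s_{\alpha_{1}}(\overline{\alpha})\in\overline{\Phi}$ and $(\alpha_{1},\overline{\alpha})\in\ZZ$ by axiom (d) for $\overline{\Phi}$.

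For the reverse inclusion $\Phi''\subseteq\Phi$, I would combine part (a) with the transitivity in \ref{def:basic}(f). Part (a) gives $\Phi'\subseteq\Phi$, hence $\alpha_{1}+ka\in\Phi$ for all $k$. Given an arbitrary $\overline{\alpha}\in\overline{\Phi}$, transitivity of $\overline{W}$ on $\overline{\Phi}$ provides $w\in\overline{W}\subseteq W$ with $w(\alpha_{1})=\overline{\alpha}$; since $w$ fixes $a$ we obtain $w(\alpha_{1}+ka)=\overline{\alpha}+ka\in\Phi$. This yields $\Phi''\subseteq\Phi$, and together with the first inclusion gives $\Phi=\Phi''$. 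I expect the main obstacle to be the $s_{\alpha_{1^{*}}}$-invariance step, where the interplay between the reflection along $\alpha_{1^{*}}=\alpha_{1}+a$ and the projection to $\overline{\Phi}$ must be tracked exactly; everything else reduces to the single fact $a\in R$ together with the known transitivity of $\overline{W}$ on $\overline{\Phi}$.
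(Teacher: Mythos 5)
Your proposal is correct and follows essentially the same route as the paper: for (b) you prove the double inclusion exactly as the paper does, by checking $B\subseteq\Phi''$ and $W$-invariance of $\Phi''$ (making explicit the $s_{\alpha_{1^{*}}}$-step the paper dismisses as ``easy to check''), then combining part (a) with the transitivity of $\overline{W}$ on $\overline{\Phi}$ from Proposition~\ref{def:basic}(f). The only difference is in (a), where the paper simply cites the $\widetilde{A_{1}}$ root system from \cite[Proposition 6.3]{Kac90}, while you verify the axioms and the minimality by a direct orbit computation --- a more self-contained version of the same fact.
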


\begin{proof}
The group $\langle s_{\alpha_{1}},s_{\alpha_{1^{*}}}\rangle$ is the dihedral group of type $\widetilde{A_{1}}$, thus the corresponding root system is $\Phi'=\lbrace \pm \alpha_{1}+ka\mid k\in \ZZ\rbrace$ by \cite[Proposition 6.3]{Kac90}. This shows (a).

Set $\Lambda = \lbrace \overline{\alpha}+ka\mid \overline{\alpha}\in \overline{\Phi},k\in \ZZ\rbrace$. Next we prove that $\Phi= \Lambda$.
Therefore observe that $B \subseteq \Lambda$. Further it is easy to check that $W(\Lambda) \subseteq \Lambda$. Therefore \ref{def:basic} (d) implies that $\Phi = W(B) \subseteq W(\Lambda) \subseteq \Lambda$. By (a) we have
$\lbrace  \pm \alpha_{1}+ka\mid k\in \ZZ\rbrace \subseteq \Phi$. The fact that $\overline{W} \subseteq W$ is transitive on $\overline{\Phi}$, see \ref{def:basic} (f),  yields $\Lambda \subseteq \Phi$, and we get (b).
\end{proof}

\begin{Lemma}\label{lem:Reflections}
Let $\alpha \in \Phi$. Then the following holds:
\begin{itemize}
\item[(a)] The reflection $s_\alpha$ is  uniquely determined
by $\Mov(s_\alpha) := (s_\alpha - \idop) (V)$,
and  it holds $\Mov(s_{\alpha}) = \spanr(\alpha)$.
\item[(b)] Let $g \in O(V,(-\mid -)) = \{g \in \GL(V) \mid (g(u)\mid  g(v)) = (u \mid v)~\text{for all $u,v$ } \in V\}$.
Then $g s_\alpha g^{-1} = s_{g(\alpha)}$.  In particular, $w s_\alpha w^{-1} = s_{w(\alpha)}$ for all $w \in W$.
\end{itemize}
\end{Lemma}
\begin{proof}
As $\alpha\notin R$ assertion (a) follows directly from the definition of $s_{\alpha}$. To prove (b) let $v \in V$ and $u = g^{-1}(v)$, and calculate
$$g s_\alpha g^{-1}(v) - v = g(s_\alpha(u) - u) \in g( \Mov (\alpha)) = \spanr (g(\alpha)),$$
which implies the assertion by (a).
\end{proof}

The following nice fact is a well-known property of reflections. For completeness we present a proof.

\begin{Lemma}\label{lem:orth_complement_lin_ind_roots}
Let $\beta_{1},\ldots,\beta_{m}\in \Phi$ be linearly independent and $v\in V$, then $s_{\beta_{1}}\cdots s_{\beta_{m}}(v)=v$ if and only if $s_{\beta_{i}}(v)=v$ for all $1\leq i \leq m$.
\end{Lemma}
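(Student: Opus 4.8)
The ``if'' implication is immediate, since any $v$ fixed by every $s_{\beta_i}$ is fixed by their product. The content lies in the converse, and the plan is to write the displacement $w(v)-v$, where $w := s_{\beta_1}\cdots s_{\beta_m}$, as an explicit linear combination of $\beta_1,\ldots,\beta_m$ and then exploit their linear independence.

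First I would introduce the partial products $u_i := s_{\beta_{i+1}}\cdots s_{\beta_m}(v)$ for $0\le i\le m$, so that $u_m = v$, $u_0 = w(v)$, and $u_{i-1} = s_{\beta_i}(u_i)$. By Lemma~\ref{lem:Reflections}(a) (or directly from the defining formula for $s_{\beta_i}$, which is legitimate since $(\beta_i,\beta_i)\neq 0$ because $\beta_i\in\Phi$) each consecutive difference lies in a single line:
$$u_{i-1}-u_i = s_{\beta_i}(u_i)-u_i = c_i\beta_i, \qquad c_i := -\frac{2(\beta_i,u_i)}{(\beta_i,\beta_i)}.$$
Summing these telescoping differences then gives $w(v)-v = u_0-u_m = \sum_{i=1}^m c_i\beta_i$.

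Now assume $w(v)=v$. Then $\sum_{i=1}^m c_i\beta_i = 0$, and the linear independence of $\beta_1,\ldots,\beta_m$ forces $c_i = 0$ for every $i$. Consequently $u_{i-1}=u_i$ for all $i$, so all the $u_i$ coincide with $u_m = v$; in particular $u_i = v$. Substituting back, $c_i = 0$ together with $u_i = v$ yields $s_{\beta_i}(v)-v = s_{\beta_i}(u_i)-u_i = c_i\beta_i = 0$, that is $s_{\beta_i}(v) = v$ for each $i$, as desired.

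There is no serious obstacle here; the only point requiring a moment of care is that the vanishing of the coefficients $c_i$ a priori only gives $s_{\beta_i}(u_i)=u_i$, and one must observe that the collapse $u_0 = \cdots = u_m = v$ promotes this to $s_{\beta_i}(v)=v$. The linear independence hypothesis is used exactly once and is indeed essential: without it the displacements along the various $\beta_i$ could cancel in the sum while the individual reflections still move $v$.
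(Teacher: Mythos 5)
Your proof is correct and takes essentially the same approach as the paper: both arguments rest on the observation that each reflection displaces a vector along its own root line, so the total displacement decomposes in $\spanr(\beta_1,\ldots,\beta_m)$ and linear independence kills the coefficients. The only (cosmetic) difference is that the paper peels off one reflection at a time and applies linear independence inductively, writing $-(\beta_1,v)\beta_1 = s_{\beta_2}\cdots s_{\beta_m}(v)-v \in \spanr(\beta_2,\ldots,\beta_m)$, whereas you telescope all $m$ displacements at once and apply independence in a single step, with your careful remark that $c_i=0$ forces the collapse $u_0=\cdots=u_m=v$ supplying exactly what the paper's induction handles implicitly.
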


\begin{proof}
Applying $s_{\beta_1}$ to the equality $s_{\beta_{1}}\cdots s_{\beta_{m}}(v)=v$ yields 
$s_{\beta_{2}}\cdots s_{\beta_{m}}(v)= s_{\beta_1}(v) = v - (v \mid \beta_1) \beta_1.$ Therefore, 
\[-(v \mid \beta_{1})\beta_{1} = s_{\beta_{2}}\cdots s_{\beta_{m}}(v)-v   \in \spanr(\beta_{2},\ldots,\beta_{m}),\]
and the linear independence of $\lbrace \beta_{1},\ldots,\beta_{m}\rbrace$ implies $(\beta_{1} \mid v)=0$. The latter is equivalent to $s_{\beta_{1}}(v)=v$. By induction we get that $s_{\beta_{i}}(v)=v$ for all $2\leq i \leq m$.
\end{proof}

\subsection{The structure of an extended Weyl system $(W,S)$}\label{subsec:StructureW}

The next definition due to Saito is helpful for the understanding of $W$
(see also \cite{BWY21}).

\begin{definition}[{\cite[(1.14) Definition 1]{Sai85}}] \label{def:eichler}
It is 
\[
E:V\otimes_{\RR}\overline{V}\rightarrow \text{End}(V),~\sum_{i}u_{i}\otimes \overline{v}_{i}\mapsto \left[x\mapsto x-\sum_{i}(v_{i} \mid x)u_{i}\right],
\]
the so called \defn{Eichler-Siegel map}.
\end{definition}

We define a binary operation $\circ$ on $V\otimes_{\RR}\overline{V}$ by setting for $x_1,x_2 \in V\otimes_{\RR}\overline{V}$:
\[x_1 \circ x_2 = x_1 + x_2 - I(x_1,x_2)\]
where
\begin{align*}
I:\left(V\otimes_{\RR}\overline{V}\right) \times \left(V\otimes_{\RR}\overline{V}\right) \rightarrow V\otimes_{\RR}\overline{V},~(x_{1},x_{2})\mapsto I(x_{1},x_{2})
\end{align*} 
with
\[I(x_{1},x_{2}):=\sum_{i_{1},i_{2}}u_{1i_{1}}\otimes(v_{1i_{1}} \mid u_{2i_{2}})\overline{v}_{2i_{2}}\]
for 
$$x_{j}=\sum_{i_{j}}u_{ji_{j}}\otimes \overline{v}_{ji_{j}}\in V\otimes_{\RR}\overline{V},~(j=1,2).$$

The operation $\circ$ yields a semi-group structure on $V\otimes_{\RR}\overline{V}$. We obtain directly:

\begin{proposition}[{\cite[1.14]{Sai85}}] \label{prop:properties_Eichler}
$\phantom{4}$
Let $x_1, x_2 \in V\otimes_{\RR}\overline{V}$.
\begin{itemize}
\item[(a)] The map $E$ is injective. It is bijective if and only if $R=0$.
\item[(b)] The map $E$ is a homomorphism of semi-groups, that is $E(x_1 \circ x_2)=E(x_1)E(x_2)$.
\item[(c)] For a non-isotropic $v\in V$, the reflection $s_{v}$ is given by $s_{v}=E(v\otimes\overline{v})$.
\item[(d)] The inverse of the Eichler-Siegel map 
\[E^{-1}:W\rightarrow V\otimes_{\RR}\overline{V}\]
on $W$ is well defined.
\item[(e)] The subspace $R\otimes_{\RR} \overline{V}$ is closed under $\circ$,
and $\circ$ coincides on $(V\otimes_{\RR} \overline{V}) \times (R \otimes_{\RR} \overline{V})$  with the additive structure of $V \otimes_{\RR} \overline{V}$.
\item[(f)] Let $r \in \RR$ and $v \in V$ non-isotropic. Then $$E((v+ ra) \otimes \overline{v}) = E(v\otimes \overline{v})E(ra \otimes \overline{v}) = s_v E(ra \otimes \overline{v}).$$
\end{itemize}
\end{proposition}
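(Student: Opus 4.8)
The plan is to strip the affine translation off $E$ and work only with its linear part. Writing $E(x)=\idop-L(x)$, where $L\colon V\otimes_{\RR}\overline V\to\END(V)$ is the linear map determined by $L(u\otimes\overline v)(x)=(v,x)\,u$, I would first record that $L$ (and hence $E$, and also the contraction $I$) is well defined: since $R$ is the radical of $(-,-)$, the functional $(v,-)$ depends only on $\overline v=v+R$, so the pairings occurring in both $E$ and $I$ are independent of the chosen lift. This reduces the entire proposition to statements about the single linear map $L$.

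For (a), I would identify $L$ with the map $V\otimes_{\RR}\overline V\to V\otimes_{\RR}V^\ast=\END(V)$ obtained by tensoring $\idop_V$ with the natural map $\iota\colon\overline V\to V^\ast$, $\overline v\mapsto(v,-)$. Here $\iota$ is injective because $\ker\bigl(V\to V^\ast\bigr)=R$, and tensoring with the vector space $V$ preserves injectivity; hence $L$, and with it $E=\idop-L$, is injective. Since $E$ is an affine translate of $L$, it is bijective iff $L$ is; as $L$ is always injective this amounts to $L$ being onto $V\otimes V^\ast$, i.e. to $\iota$ being surjective, which by a dimension count holds exactly when $\dim R=0$, that is $R=0$.

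The computational core is (b). Expanding $E(x_1)E(x_2)=(\idop-L(x_1))(\idop-L(x_2))=\idop-L(x_1)-L(x_2)+L(x_1)L(x_2)$ and comparing with $E(x_1\circ x_2)=\idop-L(x_1)-L(x_2)+L(I(x_1,x_2))$ (using linearity of $L$), the claim reduces to the single identity $L(x_1)L(x_2)=L(I(x_1,x_2))$. I would verify this by evaluating on $x\in V$: the left side is $\sum_{i_1,i_2}(v_{2i_2},x)(v_{1i_1},u_{2i_2})\,u_{1i_1}$, and the right side yields the same sum, precisely because $I$ is defined as the contraction pairing the ``$\overline V$-slot'' of $x_1$ against the ``$V$-slot'' of $x_2$. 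This is the one place where the opaque definition of $I$ must be matched against operator composition, and I regard it as the main step, though it is still a routine double-sum manipulation.

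Part (c) is a one-line computation: $E(v\otimes\overline v)(x)=x-(v,x)v$, which is $s_v(x)=x-\tfrac{2(v,x)}{(v,v)}v$ under the normalization $(v,v)=2$; since all roots of an extended root system have squared length $2$ (see \ref{def:basic}(f)), this gives $s_\alpha=E(\alpha\otimes\overline\alpha)$ for every $\alpha\in\Phi$. Then (d) follows formally: each generator $s_\alpha$ of $W$ lies in $\IM(E)$ by (c), and $\IM(E)$ is closed under composition by (b) (as $E(x_1)E(x_2)=E(x_1\circ x_2)$), so $W\subseteq\IM(E)$, and injectivity from (a) makes $E^{-1}$ well defined on $W$. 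For (e) I would observe that if $x_2\in R\otimes_{\RR}\overline V$ then every first-tensor factor $u_{2i_2}$ lies in $R$, so $(v_{1i_1},u_{2i_2})=0$ and hence $I(x_1,x_2)=0$; thus $x_1\circ x_2=x_1+x_2$ on $(V\otimes_{\RR}\overline V)\times(R\otimes_{\RR}\overline V)$, and in particular $R\otimes_{\RR}\overline V$ is closed under $\circ$. Finally (f) is assembled from the earlier parts: as $a\in R$ we have $ra\otimes\overline v\in R\otimes_{\RR}\overline V$, so by (e) $(v\otimes\overline v)\circ(ra\otimes\overline v)=(v+ra)\otimes\overline v$, and applying (b) together with (c) gives $E((v+ra)\otimes\overline v)=s_v\,E(ra\otimes\overline v)$.
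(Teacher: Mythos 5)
Your proof is correct, and since the paper offers no argument of its own for this proposition --- it is quoted from \cite[(1.14)]{Sai85} with the remark ``we obtain directly'' --- your write-up supplies the missing verification rather than diverging from an existing one. The decomposition $E(x)=\idop-L(x)$ and the reduction of (b) to the single contraction identity $L(x_1)L(x_2)=L(I(x_1,x_2))$ is exactly the computation underlying Saito's proof, and your treatment of (a) via $L=\idop_V\otimes\iota$ with $\iota\colon\overline{V}\hookrightarrow V^{\ast}$ injective with cokernel of dimension $\dim R$ is clean; note that finite-dimensionality of $V$ is used twice (for $V\otimes_{\RR} V^{\ast}\cong\END(V)$ and for the surjectivity dimension count), which holds here since the vertex set $Q$ is finite. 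Two small points are worth making explicit. First, in (d) the inclusion $W\subseteq\IM(E)$ needs closure under inverses as well as under products; this is automatic because the generators $s_{\alpha}$ are involutions, but you should say so. Second, your caveat in (c) is genuinely needed: with the paper's normalization of $E$, the identity $s_{v}=E(v\otimes\overline{v})$ holds if and only if $(v,v)=2$, so the statement as printed for an arbitrary non-isotropic $v$ is slightly too broad; it is, however, correct for every $v\in\Phi$ --- the only way it is used later, e.g.\ in Lemma~\ref{lem:Translation} and Corollary~\ref{cor:normalform_prop} --- since all roots have squared length $2$ by Proposition~\ref{def:basic}(f). The same caveat propagates to the last equality in (f), whose derivation via (e) and (b) you otherwise carry out correctly, including the key observation that $I(x_1,x_2)=0$ whenever the first tensor legs of $x_2$ lie in $R$, so that $\circ$ degenerates to addition there.
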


\begin{definition}\label{def:Lattice}
Let $L_a := \spanz(B_a)$ where $B_a:=
\{a\otimes \overline{\alpha}~|~\alpha \in \overline{B}\}$, and set $L := \spanz(B)$ and $\overline{L} := \spanz(\overline{B})$.
\end{definition}

\begin{remark}
    Notice that $\overline{L}$ is the root lattice of the Coxeter system 
    $(\overline{W}, \overline{S})$.
\end{remark}

We get as an immediate consequence of Proposition~\ref{prop:properties_Eichler} (b) and (c) the following.

\begin{Lemma}\label{lem:Generation}
$\phantom{0}$
\begin{itemize}
\item[(a)] The three lattices $L/(L \cap R)$, $\overline{L}$ and $L_a$ are isomorphic;
\item[(b)] $W \subseteq E(L \otimes_\ZZ \overline{L}) \subseteq E(V \otimes_\RR \overline{V})$.
\end{itemize}
\end{Lemma}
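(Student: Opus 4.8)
The plan is to reduce both parts to the properties of the Eichler--Siegel map recorded in Proposition~\ref{prop:properties_Eichler}, together with careful bookkeeping of where each lattice lives. Throughout I use that $R = \spanr(a)$ with $a = \alpha_{1^*} - \alpha_1 \in L$, and that $(-,-)$ takes values in $\{2,0,-1\}$ on $B \times B$ by its definition, so $(-,-)$ maps $L \times L$ into $\ZZ$.

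For part (a), I would first pin down $L \cap R$. Since $R$ is spanned by $a = \alpha_{1^*} - \alpha_1$ and $B$ is a basis of $V$, an element $ra \in L$ forces $r \in \ZZ$ (compare the $\alpha_{1^*}$-coordinate), so $L \cap R = \ZZ a$. The first isomorphism theorem applied to the projection $p\colon V \to \overline{V} = V/R$ restricted to $L$ then gives $L/(L \cap R) \cong p(L)$, because $\ker(p|_L) = L \cap R$. Next I observe $p(L) = p(\overline{L})$: the only basis vector lost in passing from $B$ to $\overline{B}$ is $\alpha_{1^*}$, and $p(\alpha_{1^*}) = p(\alpha_1) \in p(\overline{B})$ since $a \in R$. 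Moreover $p|_{\overline{L}}$ is injective, as $\spanr(\overline{B}) \cap R = 0$ (the vector $a$ has nonzero $\alpha_{1^*}$-coordinate while elements of $\spanr(\overline{B})$ have none), so $\overline{L} \cong p(\overline{L}) = p(L) \cong L/(L \cap R)$. Finally, since $R$ is one-dimensional with basis $a$, the $\RR$-linear isomorphism $\overline{V} \to R \otimes_\RR \overline{V}$, $\overline{v} \mapsto a \otimes \overline{v}$, carries $p(\overline{L}) = \spanz(p(\overline{B}))$ onto $L_a = \spanz(B_a)$, giving $L_a \cong p(\overline{L})$ and hence the third isomorphism.

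For part (b), the inclusion $E(L \otimes_\ZZ \overline{L}) \subseteq E(V \otimes_\RR \overline{V})$ is immediate once $L \otimes_\ZZ \overline{L}$ is read as the sublattice $\spanz(\{\alpha \otimes \overline{\beta} \mid \alpha \in B,\ \beta \in \overline{B}\})$ of $V \otimes_\RR \overline{V}$. For $W \subseteq E(L \otimes_\ZZ \overline{L})$ I would argue generator-by-generator and then close up under the group operation. By Proposition~\ref{prop:properties_Eichler}(c) each simple reflection is $s_\alpha = E(\alpha \otimes \overline{\alpha})$, and $\alpha \otimes \overline{\alpha} \in L \otimes_\ZZ \overline{L}$ for every $\alpha \in B$ (if $\alpha = \alpha_{1^*}$ use $\overline{\alpha}_{1^*} = \overline{\alpha}_1$). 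Since the $s_\alpha$ are involutions, every element of $W$ is a product $s_{\alpha_{i_1}} \cdots s_{\alpha_{i_k}}$, which by Proposition~\ref{prop:properties_Eichler}(b) equals $E\bigl((\alpha_{i_1} \otimes \overline{\alpha}_{i_1}) \circ \cdots \circ (\alpha_{i_k} \otimes \overline{\alpha}_{i_k})\bigr)$. It remains to check that $L \otimes_\ZZ \overline{L}$ is closed under $\circ$: writing out $I(x_1,x_2)$, every scalar appearing is a value $(v,u)$ of the form on a pair from $\overline{L} \times L \subseteq L \times L$, hence an integer, so $x_1 \circ x_2 = x_1 + x_2 - I(x_1,x_2) \in L \otimes_\ZZ \overline{L}$ whenever $x_1, x_2$ are. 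This places the whole product in $E(L \otimes_\ZZ \overline{L})$.

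The genuinely load-bearing points, rather than true obstacles, are the two integrality/injectivity checks: that $L \cap R = \ZZ a$ exactly (and not some larger lattice), and that $(-,-)$ restricts to integer values on $L$, which is precisely what makes $\circ$ preserve the integral lattice. Everything else is formal manipulation of the semigroup homomorphism $E$. I expect no case distinction between the domestic and wild types, since both satisfy $\dim R = 1$ and $R = \spanr(a)$, which is all the argument uses.
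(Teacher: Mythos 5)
Your proof is correct and follows the route the paper intends: the paper gives no written-out argument, stating only that the lemma is an immediate consequence of Proposition~\ref{prop:properties_Eichler}~(b) and~(c), and your fill-in uses exactly those facts together with the routine lattice bookkeeping ($L \cap R = \ZZ a$, injectivity of $p$ on $\overline{L}$, integrality of $(-,-)$ on $L$) that makes the ``immediate'' claim rigorous. In particular your observation that integrality of the form on $L$ is what closes $L \otimes_\ZZ \overline{L}$ under $\circ$ is precisely the load-bearing point implicit in the paper's appeal to Proposition~\ref{prop:properties_Eichler}~(b).
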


\begin{Lemma}\label{lem:Translation}
Let $\alpha \in  \Phi$ and $k \in \ZZ$.
Then the following hold.
\begin{itemize}
\item[(a)] $s_{\alpha} s_{\alpha +ka} = E(ka \otimes \overline{\alpha})$;
\item[(b)] $w E(ka \otimes  \overline{\alpha}) w^{-1} = E(ka \otimes
\overline{w(\alpha)})$ for all $w \in W$;
\item[(c)] $E(L_a)$ is an abelian group, which is normalized by $W$.
\end{itemize}
\end{Lemma}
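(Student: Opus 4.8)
The plan is to deduce all three parts from the semigroup homomorphism property of the Eichler--Siegel map (Proposition~\ref{prop:properties_Eichler}) together with two elementary facts about the vector $a$. First I would record that, since $a \in R$, we have $(a,x)=0$ for every $x \in V$ and $\overline{a}=0$; consequently each generating reflection fixes $a$, because $s_\alpha(a)=a-\tfrac{2(\alpha,a)}{(\alpha,\alpha)}\alpha=a$, so $w(a)=a$ for all $w \in W$. I would also note that for $\alpha \in \Phi$ the vector $\alpha+ka$ again lies in $\Phi$ (by Lemma~\ref{lem:root_system_I2}), satisfies $(\alpha+ka,\alpha+ka)=(\alpha,\alpha)=2$ and projects to $\overline{\alpha+ka}=\overline{\alpha}$, so that $s_{\alpha+ka}$ is a well-defined reflection with $s_{\alpha+ka}=E((\alpha+ka)\otimes\overline{\alpha})$ by Proposition~\ref{prop:properties_Eichler}(c).

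For part (a) I would apply Proposition~\ref{prop:properties_Eichler}(f) with $v=\alpha$ and $r=k$ to obtain $s_{\alpha+ka}=E((\alpha+ka)\otimes\overline{\alpha})=s_\alpha E(ka\otimes\overline{\alpha})$; multiplying on the left by $s_\alpha$ and using $s_\alpha^2=\idop$ then yields $s_\alpha s_{\alpha+ka}=E(ka\otimes\overline{\alpha})$. (Alternatively this is a one-line direct check from $s_\alpha(\alpha+ka)=-\alpha+ka$.) For part (b) I would rewrite the left-hand side via (a) as $w\,s_\alpha s_{\alpha+ka}\,w^{-1}=(ws_\alpha w^{-1})(ws_{\alpha+ka}w^{-1})$ and apply Lemma~\ref{lem:Reflections}(b) to each factor, giving $s_{w(\alpha)}s_{w(\alpha+ka)}$. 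Since $w$ fixes $a$ we have $w(\alpha+ka)=w(\alpha)+ka$, and as $w(\alpha)\in\Phi$ I may apply (a) in reverse to conclude $s_{w(\alpha)}s_{w(\alpha)+ka}=E(ka\otimes\overline{w(\alpha)})$.

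For part (c) I would first observe that $L_a=a\otimes\overline{L}\subseteq R\otimes_\RR\overline{V}$. By Proposition~\ref{prop:properties_Eichler}(e) the operation $\circ$ restricts to ordinary addition on $R\otimes_\RR\overline{V}$, so $(L_a,\circ)$ is the abelian group $(L_a,+)$; since $E$ is an injective homomorphism of semigroups with $E(0)=\idop$ and $E(x)E(-x)=E(x\circ(-x))=E(0)=\idop$, its image $E(L_a)$ is an abelian subgroup of $\GL(V)$, generated by $\{E(a\otimes\overline{\alpha})\mid\alpha\in\overline{B}\}$. To see it is normalized by $W$, I would apply (b) with $k=1$ to each generator: $wE(a\otimes\overline{\alpha})w^{-1}=E(a\otimes\overline{w(\alpha)})$. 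Because $p(\Phi)=\overline{\Phi}$ by Lemma~\ref{lem:root_system_I2}(b), the element $\overline{w(\alpha)}$ lies in $\overline{\Phi}\subseteq\overline{L}$, so $a\otimes\overline{w(\alpha)}\in a\otimes\overline{L}=L_a$ and the conjugate lies in $E(L_a)$. Hence $wE(L_a)w^{-1}\subseteq E(L_a)$ for every $w\in W$, and applying this to $w^{-1}$ gives equality.

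The computations themselves are light; the only points that need care are the two closure facts feeding the normalization in (c) --- that $W$ fixes $a$, so the $a$-component of a root is preserved, and that $\overline{w(\alpha)}$ remains inside the lattice $\overline{L}$ --- together with the consistent use of $\overline{\alpha+ka}=\overline{\alpha}$ throughout. I do not expect a genuine obstacle beyond this bookkeeping.
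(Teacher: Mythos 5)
Your proposal is correct and follows essentially the same route as the paper's (much terser) proof: part (a) via Proposition~\ref{prop:properties_Eichler}(c) and (f), part (b) by conjugating the factorization from (a) using Lemma~\ref{lem:Reflections}(b) together with $w(a)=a$, and part (c) via Proposition~\ref{prop:properties_Eichler}(b) and (e) plus (b) applied to the generators of $E(L_a)$. The details you supply --- that $a\in R$ forces $\overline{\alpha+ka}=\overline{\alpha}$ and $W$-invariance of $a$, and that $\overline{w(\alpha)}\in\overline{\Phi}\subseteq\overline{L}$ keeps the conjugates inside $E(L_a)$ --- are exactly the bookkeeping the paper leaves implicit.
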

\begin{proof}
Assertion (a) follows from Proposition~\ref{prop:properties_Eichler} (c) and (f), and (b) is a consequence of Lemma~\ref{lem:Reflections} (b) and of (a).

By Proposition~\ref{prop:properties_Eichler} (b) and (e) $E(L_a)$ is an abelian group, and 
it is normalized by $W$ by (b).
\end{proof}

We can make a first statement on the structure of $W$. 

\begin{Lemma} \label{lem:nat_proj}
The natural projection $p: V\rightarrow \overline{V}$ induces an epimorphism $\rho:W\rightarrow \overline{W}$. 
\end{Lemma}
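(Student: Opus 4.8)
The plan is to realize $\rho$ concretely as the map sending $w \in W$ to the automorphism $\overline{w}$ of $\overline{V} = V/R$ that $w$ induces, and then to read off its image from the simple reflections.

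First I would record that every generator $s_\alpha$ ($\alpha \in B$) lies in $O(V,(-,-))$, so $W \subseteq O(V,(-,-))$. Since the radical is an invariant of the form, any isometry $g$ preserves it: for $v \in R$ and $u \in V$ we have $(g(v),u) = (v,g^{-1}(u)) = 0$, whence $g(v) \in R$. Thus each $w \in W$ descends to a well-defined $\overline{w} \in \GL(\overline{V})$ characterized by $p \circ w = \overline{w} \circ p$, where $p \colon V \to \overline{V}$ is the natural projection, and $w \mapsto \overline{w}$ is a group homomorphism $\rho \colon W \to \GL(\overline{V})$. This is the map induced by the projection; it remains only to identify $\IM(\rho)$ with $\overline{W}$.

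Next I would compute $\rho$ on the simple reflections. The form induced on $\overline{V}$ is nondegenerate, and for non-isotropic $\alpha$ a direct check from the defining formula of $s_\alpha$ gives $\overline{s_\alpha} = s_{\overline{\alpha}}$, the reflection of $\overline{V}$ along $\overline{\alpha} = p(\alpha)$ (note $\overline{\alpha} \neq 0$ as $\alpha \notin R$). For $\alpha \in \overline{B}$ this yields $\rho(s_\alpha) = s_{\overline{\alpha}}$, a generator of the action of $\overline{W}$ on $\overline{V}$. The only remaining generator of $W$ is $s_{\alpha_{1^*}}$, and here is the crucial point: by Proposition and Definition~\ref{def:basic}~(b) we have $a = \alpha_{1^*} - \alpha_1 \in R$, so $\overline{\alpha_{1^*}} = \overline{\alpha_1}$ and therefore $\rho(s_{\alpha_{1^*}}) = s_{\overline{\alpha_1}} = \rho(s_{\alpha_1})$. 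Thus the extra simple reflection collapses onto $\rho(s_{\alpha_1})$, and consequently $\IM(\rho) = \langle s_{\overline{\alpha}} \mid \alpha \in \overline{B}\rangle$.

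Finally I would identify this image with $\overline{W}$. Each $s_\alpha$ with $\alpha \in \overline{B}$ fixes $R$ pointwise, since $(\alpha, R) = 0$ forces $s_\alpha(v) = v$ for $v \in R$; hence $\overline{W}$ fixes $R$ pointwise and $\rho|_{\overline{W}}$ is precisely the induced action of $\overline{W}$ on $\overline{V}$, which by Remark~\ref{Rem:RootSystem}~(b) is the action on $p(\overline{\Phi}) \cong \overline{\Phi}$. Under this identification $\IM(\rho) = \rho(\overline{W}) = \overline{W}$, so $\rho$ is surjective. I expect the one subtle point to be the faithfulness making this identification legitimate: if $w \in \overline{W}$ satisfies $\overline{w} = \idop$, then $\overline{w(\alpha)} = \overline{\alpha}$ for every $\alpha \in \overline{\Phi}$, and injectivity of $p|_{\overline{\Phi}}$ (Remark~\ref{Rem:RootSystem}~(b)) gives $w(\alpha) = \alpha$; since $\overline{\Phi}$ spans $\spanr(\overline{B})$ and $V = \spanr(\overline{B}) \oplus R$ with $w|_R = \idop$, we conclude $w = \idop$, so $\rho|_{\overline{W}}$ is injective. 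This confirms $\rho \colon W \to \overline{W}$ is an epimorphism.
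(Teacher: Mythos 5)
Your proof is correct and follows essentially the same route as the paper's: pass to the induced action on $\overline{V}=V/R$ (using that $W$ fixes the radical), observe that $\rho(s_\alpha)=s_{\overline{\alpha}}$ so that $s_{\alpha_{1^*}}$ and $s_{\alpha_1}$ collapse to the same image, and use faithfulness of the $\overline{W}$-action on $\overline{V}$ to identify $\IM(\rho)$ with $\overline{W}$ via $\rho|_{\overline{W}}$. The paper's proof goes further than the stated lemma by also computing $\ker(\rho)$ as the normal closure of $E(a\otimes\overline{\alpha}_1)$ via the Dedekind identity (yielding $W=\overline{W}\ltimes M$, needed later), but for the epimorphism claim itself your argument matches the paper's.
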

\begin{proof} 
As $R$ is the radical of the bilinear form $(-\mid-)$ on $V$, the group $W$ acts trivially on $R$. 
Further 
it also acts on $\overline{V}$. This action is given by the map  $\rho: s_\alpha \mapsto s_{\overline{\alpha}}$
for $\alpha \in\Phi$ where $s_{\overline{\alpha}}$ is the  reflection of $\overline{V}$ with $\Mov(s_{\overline{\alpha}}) = \spanr (\overline{\alpha})$.

As $\overline{W}$ acts faithfully on the complement $\Span_{\RR}(\{ b \mid b \in \overline{B} \})$
to $R$ in $V$, it also acts faithfully on $\overline{V}$ and therefore $\ker (\rho) \cap \overline{W} = \{1\}$.
Since $W = \langle s_\alpha~|~ \alpha \in B\rangle$ and since $E(a \otimes \overline{\alpha}_1) = 
  s_{\alpha_{1}}s_{\alpha_{1^{*}}} \in \ker(\rho)$, it follows by the Dedekind identity that $\ker(\rho)$ 
equals $ \langle\langle E(a \otimes \overline{\alpha}_1) \rangle \rangle$, the normal closure $M$ of $E(a \otimes \overline{\alpha}_1)$ in $W$, and that $W = \overline{W} \ltimes M$. 
This yields  that we may  identify $\rho(W)$ and $\overline{W}$ via the isomorphism $\rho_{\mid \overline{W}}$, and thereby get the assertion.
\end{proof}

\medskip
\begin{proposition}[{\cite[1.15]{Sai85}, \cite[Theorem 3.5]{STW16}}]\label{thm:isom_normalform}
The short exact sequence 
\begin{align*}
0\longrightarrow  E^{-1}(W) \cap (R\otimes_{\RR} (\overline{V})) \overset{E}{\longrightarrow} W \overset{\rho}{\longrightarrow} \overline{W} \longrightarrow 1 
\end{align*}
splits. Further $ E^{-1}(W) \cap (R\otimes_{\RR} (\overline{V})) = L_a \cong \overline{L}$ and 
\begin{itemize}
\item[(a)] $W = \overline{W} \ltimes E(L_a) \cong \overline{W} \ltimes \overline{L}$;
\item[(b)] the action of $\overline{W}$ on $ E(L_a) \cong \overline{L}$ is given in Lemma~\ref{lem:Translation} (b). 
In particular, $\overline{W}$ acts on $E(L_a) \cong \overline{L}$ as 
$\overline{W}$ acts on its root lattice.
\end{itemize}
\end{proposition}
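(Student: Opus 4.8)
The plan is to establish the three assertions --- splitting of the sequence, the identification of the kernel, and the two structural statements (a) and (b) --- by assembling the facts already gathered in Lemma~\ref{lem:nat_proj} and the preceding lemmas, with the Eichler-Siegel map doing the bookkeeping. Most of the hard work has in fact been done in Lemma~\ref{lem:nat_proj}, which already exhibits $W = \overline{W} \ltimes M$ with $M = \ker(\rho)$ the normal closure of $E(a\otimes\overline{\alpha}_1)$; so the semidirect-product decomposition and the splitting come for free once I identify $M$ with $E(L_a)$. Thus the \emph{crux} of the proposition is the equality $E^{-1}(W)\cap(R\otimes_{\RR}\overline{V}) = L_a$, i.e.\ that the kernel of $\rho$ is exactly the translation lattice $E(L_a)$ and nothing larger.

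First I would record the exactness of the sequence at the group-theoretic level. Since $\rho:W\to\overline{W}$ is an epimorphism (Lemma~\ref{lem:nat_proj}), and since $E$ is injective (Proposition~\ref{prop:properties_Eichler}(a)) with $E(ra\otimes\overline{v})$ acting trivially on $\overline{V}$ by Proposition~\ref{prop:properties_Eichler}(e), the subgroup $E\bigl(E^{-1}(W)\cap(R\otimes_{\RR}\overline{V})\bigr)$ lies in $\ker(\rho)$. Conversely, any element of $\ker(\rho)$ is, via $E^{-1}$ (well defined on $W$ by Proposition~\ref{prop:properties_Eichler}(d)), a tensor $\sum_i u_i\otimes\overline{v}_i$ whose induced action on $\overline{V}$ is trivial; I would argue that this forces each $u_i$ to lie in $R$, using that $\overline{W}$ acts faithfully on $\overline{V}$ (as in the proof of Lemma~\ref{lem:nat_proj}) together with the defining formula for $E$. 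This gives $\ker(\rho) = E\bigl(E^{-1}(W)\cap(R\otimes_{\RR}\overline{V})\bigr)$, so the sequence is exact as stated.

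Next I would pin down this kernel as $L_a$. The inclusion $E(L_a)\subseteq\ker(\rho)$ is immediate since $L_a\subseteq R\otimes_{\RR}\overline{V}$ and $E(L_a)\subseteq W$ (Lemma~\ref{lem:Translation}(c)). For the reverse inclusion, I would use that $\ker(\rho) = M$ is the normal closure of $E(a\otimes\overline{\alpha}_1)$ by Lemma~\ref{lem:nat_proj}. By Lemma~\ref{lem:Translation}(b), conjugating $E(a\otimes\overline{\alpha}_1)$ by $w\in W$ yields $E(a\otimes\overline{w(\alpha_1)})$, and since $\overline{W}$ acts transitively on $\overline{\Phi}$ (Proposition and Definition~\ref{def:basic}(f)) these conjugates realize $a\otimes\overline{\beta}$ for every $\overline{\beta}\in\overline{\Phi}$, in particular for every $\overline{\alpha}\in\overline{B}$. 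Because $E(L_a)$ is already an abelian group (Lemma~\ref{lem:Translation}(c)) containing all these generators, the normal closure $M$ is contained in $E(L_a)$, giving $M = E(L_a)$. The isomorphism $L_a\cong\overline{L}$ is then Lemma~\ref{lem:Generation}(a).

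Finally, the splitting is witnessed concretely: $\overline{W}\subseteq W$ maps isomorphically onto $\overline{W}$ under $\rho$ (since $\ker(\rho)\cap\overline{W}=1$, established in Lemma~\ref{lem:nat_proj}), so $\rho_{\mid\overline{W}}$ is a section. Assertion (a), $W=\overline{W}\ltimes E(L_a)$, is then exactly the decomposition from Lemma~\ref{lem:nat_proj} with $M$ rewritten as $E(L_a)$, and assertion (b) --- the formula for the $\overline{W}$-action on $E(L_a)$ --- is a direct restriction of Lemma~\ref{lem:Translation}(b) to $w\in\overline{W}$. The main obstacle I anticipate is the faithfulness argument showing that a trivial action on $\overline{V}$ forces the first tensor factors into $R$; this is where one must carefully combine the kernel description from Lemma~\ref{lem:nat_proj} with the explicit form of $E$, rather than merely quoting earlier statements.
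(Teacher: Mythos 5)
Your proposal is correct and follows essentially the same route as the paper: Lemma~\ref{lem:nat_proj} supplies the splitting and identifies $\ker(\rho)$ with the normal closure of $E(a\otimes\overline{\alpha}_1)$, and Lemma~\ref{lem:Translation} together with the transitivity of $\overline{W}$ on $\overline{\Phi}$ pins this kernel down as $E(L_a)\cong E(\overline{L})$. The only divergence is cosmetic: where you propose a hands-on faithfulness argument for exactness (slightly misstated --- triviality on $\overline{V}$ forces the tensor to lie in $R\otimes_{\RR}\overline{V}$, not each $u_i$ individually into $R$), the paper packages the same computation into a commutative diagram over the exact row $0\to R\otimes_{\RR}\overline{V}\to V\otimes_{\RR}\overline{V}\to (V/R)\otimes_{\RR}\overline{V}\to 0$; moreover, your own inclusion $\ker(\rho)=M\subseteq E(L_a)\subseteq E\bigl(E^{-1}(W)\cap(R\otimes_{\RR}\overline{V})\bigr)$ already yields that step, so the obstacle you flag dissolves.
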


\begin{proof} 
Let $T_{R} : = E^{-1}(W) \cap (R\otimes_{\RR} (\overline{V}))$.
The following diagram is commutative

\begin{tikzcd}
\centering
0\arrow{r} & T_{R} \arrow{r}{E} \arrow[d,hook]& W\arrow{r}{\rho} \arrow[d, hook,"E^{-1}"] &\overline{W}\arrow[d,hook,"E^{-1}"] \arrow{r}& 1\\
0\arrow{r} & R\otimes_{\RR} (\overline{V})\arrow{r}& V\otimes_{\RR}(\overline{V})\arrow{r} &(V/R)\otimes_{\RR}(\overline{V})\arrow{r}& 0,
\end{tikzcd}
\medskip\\
and the second row is exact, which implies the exactness of the first row. In particular, this also implies that $E(T_{R}) = \ker(\rho)$. The first sequence splits because of Lemma~\ref{lem:nat_proj}. By the same lemma, $\ker(\rho)$ is $\langle\langle E(a \otimes \overline{\alpha}_1) \rangle\rangle$, the normal closure of $E(a \otimes \overline{\alpha}_1)$ in $W$. Since $\overline{W}$ acts transitively on $\overline{\Phi}$ we get that $E(a \otimes \overline{\alpha}) \in  \langle\langle E(a \otimes \overline{\alpha}_1)\rangle\rangle$ for every $\alpha \in \overline{B}$. Therefore $E(L_a) \subseteq \ker(\rho)$, and $\ker(\rho) = E(L_a)$ follows with Lemma~\ref{lem:Translation} (c).
\end{proof}

\subsection{A normal form for the elements in $W$}\label{subsec:NormalForm}

Proposition~\ref{thm:isom_normalform} yields a normal form for the elements in $W$. Observe if $w \in W$, then $w = \overline{w} P(w)$ where $\overline{w} = \rho(w)$ and where $P$ is the projection of $W$ onto $\ker(\rho)$. It is $P(w) = E(a\otimes \overline{\beta})$ for some $\beta \in \overline{L}$ by Proposition~\ref{thm:isom_normalform}.

\begin{definition}\label{def:normalform}
Define a map $\TR: W \rightarrow \overline{L}$ by setting for $w \in W$ 
$$\TR(w) = \beta~\mbox{if}~\beta \in \overline{L}~\mbox{is such that}~ P(w) = E(a \otimes \overline{\beta}).$$
We call the pair $(\overline{w},\TR(w))$ the \defn{normal form of $w$} and $\TR(w)$ the \defn{translation vector} of $w$. 
\end{definition}

\begin{Lemma}\label{lem:trv_for_ref}
The following holds.

\begin{itemize}
\item[(a)] Let $w\in W$ such that $\TR(w)=\sum_{\beta \in \overline{B}}m_{\beta }\cdot \beta$ where $m_{\beta} \in \ZZ$.
Then 
\[w=\overline{w}\prod_{\beta \in \overline{B}}(s_{\beta}s_{\beta+a})^{m_{\beta}}.\]

\item[(b)] Let $\alpha=\overline{\alpha}+ka\in \Phi$ where $\overline{\alpha}\in \overline{\Phi}$ and $k\in \ZZ$. Then $\TR(s_{\alpha})=k\overline{\alpha}$.

\item[(c)] For $\gamma \in \overline{B}$, $m_{\gamma}\in \ZZ$ and $y\in \overline{W}$ we have
 \[ \TR (y^{-1}\prod_{\gamma\in \overline{B}}(s_{\gamma}s_{\gamma+a})^{m_{\gamma}}y)=\sum_{\gamma \in \overline{B}}m_{\gamma} \cdot y^{-1}(\gamma)= y^{-1}\Big(\TR \Big(\prod_{\gamma\in \overline{B}}(s_{\gamma}s_{\gamma+a})^{m_{\gamma}}\Big) \Big).
 \]
\end{itemize}
\end{Lemma}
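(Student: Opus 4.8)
The plan is to deduce all three parts directly from the Eichler--Siegel formalism of Proposition~\ref{prop:properties_Eichler}, the normal form decomposition $W=\overline{W}\ltimes E(L_a)$ of Proposition~\ref{thm:isom_normalform}, and Lemma~\ref{lem:Translation}. The one fact I will use repeatedly is that, by Proposition~\ref{prop:properties_Eichler}(e), the restriction of $E$ to $R\otimes_{\RR}\overline{V}$ turns the additive structure into the group multiplication of $W$; combined with Lemma~\ref{lem:Translation}(a) (for $k=1$) this gives $s_\beta s_{\beta+a}=E(a\otimes\overline{\beta})$ and $E(a\otimes\overline{\beta_1})E(a\otimes\overline{\beta_2})=E(a\otimes\overline{(\beta_1+\beta_2)})$. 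For part (a) I would simply unwind the definition of $\TR$: writing $\TR(w)=\sum_\beta m_\beta\beta$, the projection is $P(w)=E(a\otimes\overline{\TR(w)})=\prod_\beta E(a\otimes\overline{\beta})^{m_\beta}=\prod_\beta (s_\beta s_{\beta+a})^{m_\beta}$, and then $w=\overline{w}P(w)$ yields the claim.

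\textbf{Part (b).} Here the key is to exhibit $s_\alpha$ already in normal form. By Lemma~\ref{lem:root_system_I2}(b) I write $\alpha=\overline{\alpha}+ka$ inside $V$, so that Proposition~\ref{prop:properties_Eichler}(c) and (f) yield $s_\alpha=E((\overline{\alpha}+ka)\otimes\overline{\alpha})=s_{\overline{\alpha}}\,E(ka\otimes\overline{\alpha})$. I then need to recognize this as the decomposition $s_\alpha=\overline{s_\alpha}\,P(s_\alpha)$: on the one hand $s_{\overline{\alpha}}\in\overline{W}$ because $\overline{\alpha}\in\overline{\Phi}=\overline{W}(\overline{B})$ forces $s_{\overline{\alpha}}$ to be a $\overline{W}$-conjugate of a simple reflection, and on the other hand $E(ka\otimes\overline{\alpha})\in E(L_a)=\ker(\rho)$ since $\overline{\alpha}$ lies in the root lattice $\overline{L}$. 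Uniqueness of the normal form then gives $P(s_\alpha)=E(ka\otimes\overline{\alpha})=E(a\otimes\overline{k\overline{\alpha}})$, whence $\TR(s_\alpha)=k\overline{\alpha}$ by Definition~\ref{def:normalform}.

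\textbf{Part (c) and main obstacle.} For part (c) I would first collapse the product: as in part (a), $g:=\prod_{\gamma}(s_\gamma s_{\gamma+a})^{m_\gamma}=E(a\otimes\sum_\gamma m_\gamma\overline{\gamma})$ lies in $\ker(\rho)=E(L_a)$, so $\TR(g)=\sum_\gamma m_\gamma\gamma$, which already settles the second equality by linearity of $y^{-1}$. Since $\ker(\rho)$ is normal and $y\in\overline{W}$, the conjugate $y^{-1}gy$ again lies in $\ker(\rho)$, so its $\overline{W}$-part is trivial and it equals its own projection. Applying Lemma~\ref{lem:Translation}(b) termwise (with $w=y^{-1}$) together with the additivity of $E$ on $R\otimes_{\RR}\overline{V}$ gives $y^{-1}gy=E(a\otimes\sum_\gamma m_\gamma\overline{y^{-1}(\gamma)})$, so that $\TR(y^{-1}gy)=\sum_\gamma m_\gamma\,y^{-1}(\gamma)$, establishing the first equality. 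I expect the only real friction to be bookkeeping: throughout one must keep straight the identification (Remark~\ref{Rem:RootSystem}(b)) of a root $\overline{\alpha}\in\overline{\Phi}$ with its representative in $V$, so that expressions such as $\overline{y^{-1}(\gamma)}$ and $y^{-1}(\gamma)$ agree, and in part (b) one must genuinely invoke uniqueness of the normal form rather than merely producing some factorization.
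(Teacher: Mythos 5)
Your proposal is correct and takes essentially the same route as the paper, whose (very terse) proof likewise deduces (a) from Lemma~\ref{lem:Translation}(a) together with Proposition~\ref{prop:properties_Eichler}(e), (b) from Proposition~\ref{prop:properties_Eichler}(c) and (f), and (c) as a direct consequence of Lemma~\ref{lem:Translation}. Your write-up merely makes explicit the steps the paper leaves implicit, namely the appeal to uniqueness of the semidirect-product normal form in (b) and the identification of roots in $\overline{\Phi}$ with their representatives in $V$ in (c).
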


\begin{proof}
$\phantom{4}$
Lemma~\ref{lem:Translation} (a) and Proposition~\ref{prop:properties_Eichler} (e) yield (a), while Proposition~\ref{prop:properties_Eichler} (f) yields assertion (b). The third assertion is a direct consequence of Lemma~\ref{lem:Translation}.
\end{proof}

\begin{Lemma}\label{lem:translation_vector}
The translation vector satisfies the following properties:
  \begin{itemize}
    \item[(a)] $\TR (s_{1^{*}})=\alpha_{1}$;
    \item[(b)] $\TR (s)=0$ for all $s\in S\setminus \lbrace s_{1^{*}}\rbrace$;
    \item[(c)] $\TR (xy)=y^{-1}\TR (x)+\TR (y)$ for all $x,y \in W$.
  \end{itemize}
In particular, the translation vector of an element of $W$ is uniquely determined by the properties $(a)-(c)$.
\end{Lemma}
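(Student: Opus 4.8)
The plan is to verify properties (a), (b) and (c) directly from the definition of the translation vector and the structure results already established, and then to argue uniqueness as an easy formal consequence. First I would observe that the generator $s_{1^{*}} = s_{\alpha_{1^{*}}}$ can be written as $s_{\alpha_{1^{*}}} = s_{\alpha_{1}} \cdot (s_{\alpha_{1}} s_{\alpha_{1^{*}}}) = s_{\alpha_{1}} E(a \otimes \overline{\alpha}_{1})$, using that $E(a \otimes \overline{\alpha}_{1}) = s_{\alpha_{1}} s_{\alpha_{1^{*}}}$ (as noted in the proof of Lemma~\ref{lem:nat_proj}). Since $\rho(s_{\alpha_{1^{*}}}) = s_{\overline{\alpha_{1^{*}}}} = s_{\overline{\alpha_{1}}} = \rho(s_{\alpha_{1}})$ because $\overline{\alpha_{1^{*}}} = \overline{\alpha_{1}}$ in $\overline{V} = V/R$, the element $\overline{s_{1^{*}}}$ equals $s_{\overline{\alpha_{1}}}$, and the $\ker(\rho)$-part $P(s_{1^{*}})$ is exactly $E(a \otimes \overline{\alpha}_{1})$. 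By Definition~\ref{def:normalform} this gives $\TR(s_{1^{*}}) = \overline{\alpha}_{1} = \alpha_{1}$ (using Remark~\ref{Rem:RootSystem}(b) that $\alpha = \overline{\alpha}$ for $\alpha \in \overline{\Phi}$), which is (a). Alternatively, this is just the special case $k=1$, $\overline{\alpha} = \alpha_{1}$ of Lemma~\ref{lem:trv_for_ref}(b).

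For (b), each $s \in S \setminus \{s_{1^{*}}\}$ is $s_{\alpha}$ for some $\alpha \in \overline{B}$, hence $s \in \overline{W}$, so $s$ lies entirely in the complement factor of the semidirect product $W = \overline{W} \ltimes E(L_{a})$ from Proposition~\ref{thm:isom_normalform}(a). Thus $P(s) = \idop = E(a \otimes \overline{0})$ and $\TR(s) = 0$. (Again this is the $k=0$ case of Lemma~\ref{lem:trv_for_ref}(b).) For (c), the cocycle identity, I would unwind the normal forms: writing $x = \overline{x} P(x)$ and $y = \overline{y} P(y)$ with $P(x) = E(a \otimes \overline{\TR(x)})$ and $P(y) = E(a \otimes \overline{\TR(y)})$, compute
\[
xy = \overline{x}\, P(x)\, \overline{y}\, P(y) = \overline{x}\,\overline{y}\,\big(\overline{y}^{-1} P(x) \overline{y}\big) P(y).
\]
Since $\rho(xy) = \overline{x}\,\overline{y}$, the $\ker(\rho)$-part of $xy$ is $\big(\overline{y}^{-1} P(x) \overline{y}\big) P(y)$. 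Now $\overline{y}^{-1} P(x) \overline{y} = E\big(a \otimes \overline{y^{-1}(\TR(x))}\big)$ by Lemma~\ref{lem:Translation}(b), and since $E(L_{a})$ is abelian and $\circ$ restricts to addition on $R \otimes_{\RR} \overline{V}$ by Proposition~\ref{prop:properties_Eichler}(e), the product of the two $\ker(\rho)$-elements is $E\big(a \otimes \overline{(y^{-1}(\TR(x)) + \TR(y))}\big)$. Reading off the translation vector gives $\TR(xy) = y^{-1}\TR(x) + \TR(y)$, which is (c).

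For uniqueness, I would note that properties (a)--(c) determine $\TR$ on all of $W$: by (c) the map $\TR$ is a (twisted) cocycle, so its value on any word in the generators $S$ is forced once the values on the generators are fixed, and (a)--(b) fix those values. More precisely, given any $w \in W$, write $w = s_{i_{1}} \cdots s_{i_{\ell}}$ as a product of simple reflections; iterating (c) expresses $\TR(w)$ as a $\ZZ$-linear combination of the $\overline{W}$-images of the $\TR(s_{i_{j}})$, each of which is determined by (a) and (b). Hence any two maps satisfying (a)--(c) agree on every generator and on every product, so they coincide.

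\textbf{Main obstacle.} The only genuinely delicate point is the bookkeeping in (c): one must be careful that $\rho(P(x)) = 1$ so that the $\overline{W}$-part of $xy$ is exactly $\overline{x}\,\overline{y}$, and that passing $P(x)$ past $\overline{y}$ produces precisely the twist $y^{-1}(\TR(x))$ via Lemma~\ref{lem:Translation}(b) rather than $\overline{y}(\TR(x))$ — here the appearance of the inverse is the subtle sign/variance issue. Everything else is a direct reading-off from the semidirect product decomposition of Proposition~\ref{thm:isom_normalform}.
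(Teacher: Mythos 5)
Your proof is correct and takes essentially the same route as the paper: (a) and (b) are read off from the semidirect decomposition (the $k=1$ and $k=0$ cases of Lemma~\ref{lem:trv_for_ref}(b)), and your computation for (c) — conjugating $P(x)$ past $\overline{y}$ via Lemma~\ref{lem:Translation}(b) and multiplying inside $E(L_a)$ via Proposition~\ref{prop:properties_Eichler}(e) — is exactly the paper's chain of equalities $xy = \overline{x}\,\overline{y}\,\overline{y}^{-1}E(a\otimes \beta_x)\overline{y}\,E(a\otimes\beta_y) = \overline{x}\,\overline{y}\,E\bigl(a\otimes(\overline{y}^{-1}(\beta_x)+\beta_y)\bigr)$. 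Your explicit induction on words in $S$ for the uniqueness assertion is correct and supplies a detail the paper leaves implicit.
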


\begin{proof}
Lemma \ref{lem:trv_for_ref} (b) yields $\TR(s_{\alpha_{1^{*}}})=\TR(s_{\alpha_{1}+a})=\alpha_{1}$, which is (a). Assertion (b) follows from Proposition~\ref{prop:properties_Eichler} (c) and (f). Next we prove (c). Let $x,y \in W$ and $\beta_x = \TR(x), ~\beta_y = \TR(y)$.
Then by Lemma~\ref{lem:Translation} (b)
$$xy = \overline{x}E(a\otimes \beta_x)\overline{y}E(a\otimes \beta_y) = \overline{x} \overline{y} \overline{y}^{-1} E(a\otimes \beta_x) \overline{y} E(a\otimes \beta_y)
= \overline{x} \overline{y}E(a\otimes (\overline{y}^{-1}(\beta_x))+ \beta_y),$$
which yields (c).
\end{proof}

\begin{corollary}\label{cor:normalform_prop}
The elements in $T$ have normal form $(s_{\alpha},k \alpha)$ where  $k$ is any element in $\ZZ$  and $\alpha$ any root  in $\overline{\Phi}$.
\end{corollary}
\begin{proof}
If $t \in T$, then by Lemma~\ref{lem:root_system_I2} (b) there is a root $\alpha \in \overline{\Phi}$ and $k \in \ZZ$ such that $t = s_{\alpha +ka}$. By 
Lemma~\ref{lem:trv_for_ref} the normal form of $t$ is $(s_{\alpha},k \alpha)$.

On the other hand Lemma~\ref{lem:root_system_I2} (b) implies that for every $k\in \ZZ$ and $\alpha \in \overline{\Phi}$ the tuple $(s_{\alpha},k \alpha)$ is the normal form of some reflection.
\end{proof}

The following generalizes the respective property for affine simply laced Coxeter groups (see \cite[Lemma 2.11]{PW17a}).
Our proof is  a generalization of the one given in \cite{PW17a}. Therefore we leave out the details.

\begin{Lemma}\label{lem:zero_tran}
Let $\beta_{1},\ldots,\beta_{n}\in \Phi$ such that $\overline{\beta_{1}},\ldots,\overline{\beta_{n}}\in \overline{\Phi}$ are linearly independent. Then \linebreak $\TR(s_{\beta_{1}} \cdots s_{\beta_{n}})=0$ if and only if $\TR(s_{\beta_{1}})=\ldots = \TR(s_{\beta_{n}})=0$.
\end{Lemma}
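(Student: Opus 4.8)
The plan is to reduce the whole statement to a single linear-algebra computation in $\overline{V}$. First I would write each root in the form $\beta_{i}=\overline{\beta_{i}}+k_{i}a$ with $\overline{\beta_{i}}\in\overline{\Phi}$ and $k_{i}\in\ZZ$, which is legitimate by Lemma~\ref{lem:root_system_I2}(b). By Lemma~\ref{lem:trv_for_ref}(b) we then have $\TR(s_{\beta_{i}})=k_{i}\overline{\beta_{i}}$, and since $\overline{\beta_{i}}\neq 0$ this vanishes precisely when $k_{i}=0$. Hence the whole lemma is equivalent to the assertion that $\TR(s_{\beta_{1}}\cdots s_{\beta_{n}})=0$ holds if and only if $k_{1}=\cdots=k_{n}=0$, and I would aim to establish this equivalence in one stroke.

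Next I would expand $\TR(s_{\beta_{1}}\cdots s_{\beta_{n}})$ by repeatedly applying the cocycle rule $\TR(xy)=y^{-1}\TR(x)+\TR(y)$ of Lemma~\ref{lem:translation_vector}(c), keeping in mind that $W$ acts on $\overline{V}$ only through its quotient $\overline{W}$, so that $s_{\beta_{i}}$ acts as $s_{\overline{\beta_{i}}}$. A routine downward induction then yields
\[
\TR(s_{\beta_{1}}\cdots s_{\beta_{n}})=\sum_{j=1}^{n} k_{j}\, v_{j}, \qquad v_{j}:=s_{\overline{\beta_{n}}}\cdots s_{\overline{\beta_{j+1}}}(\overline{\beta_{j}}),
\]
where for $j=n$ the product is empty and $v_{n}=\overline{\beta_{n}}$. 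Everything now lives in $\overline{V}$, and the problem is reduced to understanding the vectors $v_{j}$.

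The heart of the argument, and the step I expect to be the only genuine obstacle, is to prove that $v_{1},\ldots,v_{n}$ are linearly independent. For this I would use that each reflection $s_{\overline{\beta_{i}}}$ alters a vector only by a multiple of $\overline{\beta_{i}}$; applying the reflections with indices $>j$ to $\overline{\beta_{j}}$ therefore gives, by an immediate downward induction,
\[
v_{j}\equiv \overline{\beta_{j}} \pmod{\spanr(\overline{\beta_{j+1}},\ldots,\overline{\beta_{n}})}.
\]
Thus, using the hypothesis that the $\overline{\beta_{i}}$ are linearly independent, the matrix expressing $(v_{1},\ldots,v_{n})$ in the basis $(\overline{\beta_{1}},\ldots,\overline{\beta_{n}})$ of their span is triangular with $1$'s on the diagonal, hence invertible, so the $v_{j}$ are linearly independent. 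Consequently $\sum_{j}k_{j}v_{j}=0$ if and only if every $k_{j}=0$, which by the first paragraph is exactly the statement that $\TR(s_{\beta_{1}}\cdots s_{\beta_{n}})=0$ if and only if $\TR(s_{\beta_{1}})=\cdots=\TR(s_{\beta_{n}})=0$.
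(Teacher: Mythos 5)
Your proof is correct and takes essentially the same route as the paper: the paper's (deliberately terse) proof rests on exactly your cocycle expansion, namely $\TR(s_{\beta_{1}}\cdots s_{\beta_{n}})=\sum_{i=0}^{n-2}s_{\overline{\beta_{n}}}\cdots s_{\overline{\beta_{n-i}}}\bigl(\TR(s_{\beta_{n-i-1}})\bigr)+\TR(s_{\beta_{n}})$, which is your $\sum_{j}k_{j}v_{j}$, followed by ``an easy induction'' exploiting the linear independence of the $\overline{\beta_{i}}$. Your one-shot unitriangularity argument for the independence of the $v_{j}$ is merely a repackaging of that induction, and it correctly supplies the details the paper leaves to the reader.
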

\begin{proof}
It is easy to prove that (see \cite[Lemma 2.11]{PW17a})
\begin{align*}
\TR(s_{\beta_{1}}\cdots s_{\beta_{n}})=\sum_{i=0}^{n-2}s_{\overline{\beta_{n}}}\cdots s_{\overline{\beta_{n-i}}}\left(\TR \left( s_{\beta_{n-i-1}}\right)\right)+\TR \left(s_{\beta_{n}}\right). \label{eq1}
\end{align*}
Using the previous formula in an easy induction on the number of factors, the assertion follows.
\end{proof}

\section{The hyperbolic cover of an extended Weyl system}\label{Sec:HyperbolicCover}

In this section we construct the hyperbolic cover of an extended Weyl system $(W,S)$. First, we will introduce it for the groups of domestic or wild type, and then we will recall the construction for those of tubular type (see \cite{Sai85, BMW25}). Recall that $(V,B,(-\mid-))$ is the extended space for $(W,S)$ (see Section~\ref{sec:ExtendedSpace}).

\subsection{The hyperbolic cover of a domestic or wild type}\label{}
The extended Weyl group $W$ of domestic or wild type  also acts faithfully  on a non-degenerate space $\tilde{V}$. Recall that $\overline{V}$ is the subspace of $V$ that is generated by 
$\overline{B} = B\setminus{\{\alpha_{1^*}\}}$. Let $B = \{\alpha_1, \cdots , \alpha_n\}$.

The space $\tilde{V}$ is constructed as follows:
extend the $\RR$-space $V$ to the $\RR$-space  
$$\tilde{V} = V\oplus \RR a',~\mbox{and  set}~(\overline{V} \mid a') = 0~\mbox{and}~
(a\mid a') = 1.$$  Then the signature of $\tilde{V}$ is $(n+2, 1,0)$ and $(n+1, 2,0)$ in the domestic and the wild cases, respectively. The root system  $\Phi$ is contained in $V \subset \tilde{V}$, and we can consider the 
reflections $\tilde{s}_i$ of $\tilde{V}$ with respect to the root $\alpha_i \in \tilde{V}$ for $i \in \{1, \ldots ,n, 1^*\}$. 
We define the group generated by the $\tilde{s}_i$ 
$$\tilde{W}:= \langle s \mid s \in \tilde{S}\rangle ~\mbox{where}~
\tilde{S}:= \{\tilde{s}_1 , \ldots, \tilde{s}_n, \tilde{s}_{1^*} \},$$
and call the pair $(\tilde{W}, \tilde{S})$ \defn{hyperbolic cover} of $(W,S)$. By abuse of language we also call $\tilde{W}$ \defn{hyperbolic cover} of $W$.

\begin{proposition}\label{Prop:hypcoverWild}
The group $\tilde{W}$ acts faithfully on $V$, and $\tilde{W}$ is isomorphic to $W$.
\end{proposition}
 \begin{proof}
Let $J$ be the Gram matrix of $(-\mid -)$ with respect to the basis $\tilde{B}:= B \cup 
\{a'\}$ of $\tilde{V}$. As the form is left invariant by the elements of $\tilde{W}$, it is $A^tJA = J$ for every element $w$ of $\tilde{W}$ where we read $w$ as a matrix $A$ over $\tilde{B}$. From this we conclude that $w(a') = a'$ for every $w \in \tilde{W}$. Thus, $\tilde{W}$ and $W$ are isomorphic and, as the action of $W$ on $V$ is
faithful, the action of $\tilde{W}$ on $V$ is also faithful.
 \end{proof}

\subsection{The hyperbolic cover of a tubular type}
 We recall the definition of a hyperbolic cover $(\tilde{W}, \tilde{S})$ of an extended  Weyl system $(W,S)$ of tubular
 type given in \cite{BMW25}. 
 
In \cite{BMW25} the labeling of the extended Coxeter-Dynkin diagram $\Gamma$
is slightly different than in this paper (there it is adapted 
to \cite{Bou02}).
There the labeling of $\Gamma$ is by the set 
 $\{0, \ldots , n, t^*\}$  where either $t= 2$ in type $D_4^{(1,1)}$ or $t = 4$ else,
 and $\Gamma$ has precisely two vertices of valency at least $3$, which  are labeled by $t$ and $t^*$ instead of $1$ and $1^*$, respectively. The subdiagrams on the set of vertices  labeled by
 $1, \ldots , n$ and $0, \ldots , n$
  are a finite and an affine Coxeter diagram $\Gamma_{\fin}$ and $\Gamma_b$, respectively.

  \begin{figure}
  \centering
  \begin{tikzpicture}[scale=1.9]

    \node (03) at (0.5,0.5) [] {$D_4^{(1,1)}$};
    \node (G3) at (2,1) [circle, draw, fill=black!50, inner sep=0pt, minimum width=4pt]{};
    \node (A3) at (1.5,0.5) [circle, draw, fill=black!50, inner sep=0pt, minimum width=4pt]{};
    \node (B3) at (2.5,0.5) [circle, draw, fill=black!50, inner sep=0pt, minimum width=4pt]{};
    \node (C3) at (2,0.5) [circle, draw, fill=black!50, inner sep=0pt, minimum width=4pt]{};
    \node (E3) at (1.6, 0.1) [circle, draw, fill=black!50, inner sep=0pt, minimum width=4pt]{};
    \node (F3) at (2.4, 0.1) [circle, draw, fill=black!50, inner sep=0pt, minimum width=4pt]{};

    % Knoten
    \node (04) at (0.5,-1) [] {$E_6^{(1,1)}$};
    \node (A4) at (1,-1) [circle, draw, fill=black!50, inner sep=0pt, minimum width=4pt] {};
    \node (B4) at (1.5,-1) [circle, draw, fill=black!50, inner sep=0pt, minimum width=4pt]{};
    \node (C4) at (2,-1) [circle, draw, fill=black!50, inner sep=0pt, minimum width=4pt]{};
    \node (D4) at (2.5,-1) [circle, draw, fill=black!50, inner sep=0pt, minimum width=4pt]{};
    \node (E4) at (3,-1) [circle, draw, fill=black!50, inner sep=0pt, minimum width=4pt]{};
    \node (F4) at (2.4,-1.4) [circle, draw, fill=black!50, inner sep=0pt, minimum width=4pt]{};
    \node (G4) at (2.8,-1.8) [circle, draw, fill=black!50, inner sep=0pt, minimum width=4pt]{};
    \node (H4) at (2,-0.5) [circle, draw, fill=black!50, inner sep=0pt, minimum width=4pt]{};

    \node (05) at (4.2,0.5) [] {$E_7^{(1,1)}$};
    \node (A5) at (4.7,0.5) [circle, draw, fill=black!50, inner sep=0pt, minimum width=4pt] {};
    \node (B5) at (5.2,0.5) [circle, draw, fill=black!50, inner sep=0pt, minimum width=4pt]{};
    \node (C5) at (5.7,0.5) [circle, draw, fill=black!50, inner sep=0pt, minimum width=4pt]{};
    \node (D5) at (6.2,0.5) [circle, draw, fill=black!50, inner sep=0pt, minimum width=4pt]{};
    \node (E5) at (6.7,0.5) [circle, draw, fill=black!50, inner sep=0pt, minimum width=4pt]{};
    \node (H5) at (6.6,0.1) [circle, draw, fill=black!50, inner sep=0pt, minimum width=4pt]{};
    \node (F5) at (7.2,0.5) [circle, draw, fill=black!50, inner sep=0pt, minimum width=4pt]{};
    \node (G5) at (7.7,0.5) [circle, draw, fill=black!50, inner sep=0pt, minimum width=4pt]{};
    \node (I5) at (6.2,1) [circle, draw, fill=black!50, inner sep=0pt, minimum width=4pt]{};

    \node (06) at (4.2,-1) [] {$E_8^{(1,1)}$};
    \node (A6) at (4.7,-1) [circle, draw, fill=black!50, inner sep=0pt, minimum width=4pt] {};
    \node (B6) at (5.2,-1) [circle, draw, fill=black!50, inner sep=0pt, minimum width=4pt]{};
    \node (C6) at (5.7,-1) [circle, draw, fill=black!50, inner sep=0pt, minimum width=4pt]{};
    \node (D6) at (6.2,-1) [circle, draw, fill=black!50, inner sep=0pt, minimum width=4pt]{};
    \node (E6) at (6.7,-1) [circle, draw, fill=black!50, inner sep=0pt, minimum width=4pt]{};
    \node (F6) at (7.2,-1) [circle, draw, fill=black!50, inner sep=0pt, minimum width=4pt]{};
    \node (G6) at (7.7,-1) [circle, draw, fill=black!50, inner sep=0pt, minimum width=4pt]{};
    \node (H6) at (8.2,-1) [circle, draw, fill=black!50, inner sep=0pt, minimum width=4pt]{};
    \node (I6) at (5.7,-0.5) [circle, draw, fill=black!50, inner sep=0pt, minimum width=4pt]{};
    \node (J6) at (6.1,-1.4) [circle, draw, fill=black!50, inner sep=0pt, minimum width=4pt]{};
    
    % Kanten

    \draw[-] (A3) to (C3);
    \draw[-] (C3) to (B3);
    \draw[-] (C3) to (E3);
    \draw[-] (C3) to (F3);
    \draw[-] (G3) to (A3);
    \draw[-] (G3) to (B3);
    \draw[-] (G3) to (E3);
    \draw[-] (G3) to (F3);
    \draw[dashed] ([xshift=0.5]C3.north) to ([xshift=0.5]G3.south);
    \draw[dashed] ([xshift=-0.5]C3.north) to ([xshift=-0.5]G3.south);

    \draw[-] (A4) to (B4);
    \draw[-] (B4) to (C4);
    \draw[-] (C4) to (D4);
    \draw[-] (D4) to (E4);
    \draw[-] (C4) to (F4);
    \draw[-] (F4) to (G4);
    \draw[dashed] ([xshift=0.5]C4.north) to ([xshift=0.5]H4.south);
    \draw[dashed] ([xshift=-0.5]C4.north) to ([xshift=-0.5]H4.south);
    \draw[-] (B4) to (H4);
    \draw[-] (D4) to (H4);
    \draw[-] (F4) to (H4);

    \draw[-] (A5) to (B5);
    \draw[-] (B5) to (C5);
    \draw[-] (C5) to (D5);
    \draw[-] (D5) to (E5);
    \draw[-] (E5) to (F5);
    \draw[-] (F5) to (G5);
    \draw[-] (D5) to (H5);
    \draw[-] (I5) to (C5);
    \draw[-] (I5) to (E5);
    \draw[-] (I5) to (H5);
    \draw[dashed] ([xshift=0.5]D5.north) to ([xshift=0.5]I5.south);
    \draw[dashed] ([xshift=-0.5]D5.north) to ([xshift=-0.5]I5.south);

    \draw[-] (A6) to (B6);
    \draw[-] (B6) to (C6);
    \draw[-] (C6) to (D6);
    \draw[-] (D6) to (E6);
    \draw[-] (E6) to (F6);
    \draw[-] (F6) to (G6);
    \draw[-] (G6) to (H6);
    \draw[-] (I6) to (B6);
    \draw[-] (I6) to (D6);
    \draw[-] (I6) to (J6);
    \draw[-] (C6) to (J6);
    \draw[dashed] ([xshift=0.5]C6.north) to ([xshift=0.5]I6.south);
    \draw[dashed] ([xshift=-0.5]C6.north) to ([xshift=-0.5]I6.south);

  \end{tikzpicture}
  \caption{Elliptic Dynkin diagrams for the tubular elliptic root systems} \label{fig:EllipticDynkin}
\end{figure}

 The basis $B$ of $V$ is  $B= \{\alpha_0, \ldots , \alpha_n, \alpha_{t^*}\}$, and 
 the radical $R $ of the bilinear form $(-\mid -)$ is $2$-dimensional. 
 Then  $a= \alpha_{t^*} - \alpha_t$ and $b = \alpha_0 + \tilde{\alpha}$
 form  a basis of $R$ where $\tilde{\alpha}$ is the highest root in the finite root system given by $\{\alpha_1, \ldots , \alpha_n\}$, and $V = V_{\fin} \oplus R$, where $V_{\fin}$ is the $\RR$-span of $\alpha_1, \ldots , \alpha_n$.
 
 In \cite{BMW25} the projected root system $\overline{\Phi} \subseteq \Phi$ and 
 the projected Coxeter system $(\overline{W}, \overline{S})$ are denoted by $\Phi_b$ and $(W_b, S_b)$, respectively.
 Hence $\Phi_b$ is the subsystem of $\Phi$ generated by 
 $\alpha_0, \ldots , \alpha_n$, and $W_b$ the subgroup of $W$ generated by $s_0, \ldots, s_n$.
 Further it holds $ S_b = \{s_0, s_1, \cdots , s_n\} $, and $S_{\fin} = \{s_1, \cdots , s_n\} \subset S_b$, which  generates a finite Coxeter group. The Dynkin diagram $\Gamma_b$ for $(W_b, S_b)$ is a star.
 
 The space $V$ and the form $(-\mid -)$ are extended to 
 $$\tilde{V} = V\oplus \RR b',~\mbox{with}~(V_{\fin} \mid b') =  0 = (a \mid b')~\mbox{and}~
(b\mid b') = 1.$$ 
 Again it holds $\Phi \subset V \subset \tilde{V}$. We consider the 
reflections $\tilde{s}_i$ of $\tilde{V}$ with respect to the root $\alpha_i \in \tilde{V}$ for $i \in \{0, \ldots ,n, t^*\}$. 
The hyperbolic cover $(\tilde{W}, \tilde{S})$ of $(W,S)$ is defined by
$$\tilde{S}:= \{\tilde{s}_1 , \ldots, \tilde{s}_n, \tilde{s}_{t^*} \}~\mbox{and}~
\tilde{W}:= \langle s \mid s \in \tilde{S}\rangle .$$ 
In \cite{BMW25} we also extend $\tilde{V}$ by an element 
$a'$ as above and call the similarly constructed group $\hat{W}$.
The first two authors justify the name hyperbolic cover in \cite{BMW25} (see also \cite{ST97}).
 
\begin{proposition}[{\cite[Proposition 3.10]{BMW25}}]\label{StructureTildeW}
		The following hold.
	\begin{itemize}
		\item[(a)]	The group  $\tilde{W}$  is a non-split central extension  of $W$ by an infinite cyclic group.
		\item[(b)] The groups $\tilde{W}$ and $\hat{W}$  are isomorphic.
	\end{itemize} 
\end{proposition}

\section{ Extended Coxeter groups (by its root lattice)}\label{sec:ExtCoxGrpRootLattice}\label{Sec:CoxExtensionRootLattice}

In this section we recall extended Coxeter groups, which first appeared in \cite{Lek2}.

\subsection{The extended Coxeter system $(\mathcal{W}, \mathcal{S})$}\label{subsec:DefCoxExtRootLattice}

Let $(\overline{W}, \overline{S})$ be a crystallographic Coxeter system with a linear representation on $\overline{V}$, which is  equipped with a bilinear form $\langle -\mid -\rangle$. Further let $\overline{\Phi} \subseteq \overline{V}$ be a root system  for the Coxeter system and $L(\overline{\Phi})$ its root lattice.
 Then  $\overline{W}$ acts on $L(\overline{\Phi})$.
The semi-direct product
$$\mathcal{W} := \mathcal{W}(\overline{W}, \overline{S}):= \overline{W} \ltimes L(\overline{\Phi})$$
is the \defn{extended Coxeter group (by its root lattice)} (see \cite{Looi80, Lek, Lek2}).
We write the elements $w \in \mathcal{W}$ as pairs 
$w = (\overline{w},\lambda) $ where $\overline{w}$ is in $\overline{W}$ and $\lambda$ in $L(\overline{\Phi})$.
Similarly as in Section~\ref{sec:NormalForm} we consider a map $\TR: \mathcal{W} \rightarrow L(\overline{\Phi})$ sending $w =(\overline{w},\lambda) $ to $\lambda$.

We further assume that the Coxeter diagram $\overline{\Gamma}$ of the system $(\overline{W}, \overline{S})$ is a star, and 
that $\langle \alpha\mid \alpha \rangle = 2$ for all $\alpha$ in $\overline{\Phi}$.
Then we say that  $\mathcal{W}$ is of \defn{ star type}.

Label the vertices of $\overline{\Gamma}$ by $1$ to $n$, such that $1$ is the label of a vertex of maximal valency.
Further let $\alpha_i$ be the positive root of $\overline{\Phi}$ related to $s_i \in \overline{S}$ and set
$\overline{ B}: = \{\alpha_1, \ldots ,\alpha_n\}$.
Then set $$s_{1^*}:= (s_1, \alpha_1).$$

In the next lemma we explicitly write down the multiplication in $\mathcal{W}$.

\begin{Lemma}\label{lem:Multiplication}
The following hold.
\begin{itemize}
 \item[(a)]   
Let $w_i = (\overline{w_i},\lambda_i)  \in \mathcal{W}$ for $i=1,2$. Then 
 $w_1 w_2 = (\overline{w_1}\overline{w_2}, w_2^{-1}(\lambda_1) + \lambda_2) $.
 \item[(b)] $s_1 s_{1^*} = (1,\alpha_1) $ and  for $k \in \ZZ$ it is $(s_i, 0)(s_i, k\alpha_i) = (1, k \alpha_i)$.
 \end{itemize}
\end{Lemma}
\begin{proof}
  Assertion (a) follows from a direct calculation in $\mathcal{W}$ and (b) is a special case of (a).  
\end{proof}

Lemma~\ref{lem:Multiplication} gives us that $s_{1^*}$ is an 
involution: $s_{1^*}^2  = (s_1^2,s_1(\alpha_1) + \alpha_1) = (id, 0)$.

Set
$$\mathcal{S}:= \overline{S}\cup \{s_{1^*}\}\subset \mathcal{W},$$
and call it \defn{simple system for} $\mathcal{W}$, and $(\mathcal{W}, \mathcal{S})$ \defn{ extended Coxeter system}. 

Let $W$ and $W'$ be two groups that are generated by $S$ and $S'$, respectively.
We say that $(W,S)$ and $(W',S')$ are \defn{isomorphic}, if there is an isomorphism from
$W$ to $W'$ that maps $S$ bijectively onto $S'$.
As a consequence of Proposition~\ref{thm:isom_normalform} and \cite[Theorem~3.21]{BMW25} we obtain the next statement.

\begin{proposition}\label{Prop:CharExtendedWeylNotTubular}
The following hold.
\begin{itemize}
\item[(a)]
Let $(W,S)$ be an extended Weyl system of domestic or wild type, then it  is isomorphic to an extended  Coxeter system  of star type.
\item[(b)]
Let $(\mathcal{W}, \mathcal{S})$ be
an extended Coxeter system of star type where $\mathcal{W} = \mathcal{W}(\overline{W}, \overline{S})$. If $\overline{\Gamma}$ is not an affine diagram, 
then $(\mathcal{W}, \mathcal{S})$ is isomorphic to an extended Weyl system of domestic or wild type.
If $\overline{\Gamma}$ is an affine diagram, then $(\mathcal{W}, \mathcal{S})$ is isomorphic to the hyperbolic cover of an extended Weyl system of tubular type.
\end{itemize}
\end{proposition}
\begin{proof}
Statement (a) is a consequence of Proposition~\ref{thm:isom_normalform}
(a) and (b).

Let $\mathcal{W} = \mathcal{W}(\overline{W}, \overline{S})$ be as in (b). 
First assume that $\overline{\Gamma}$ is not of affine type. 
Then there is an extended Weyl group $(W,S)$ of  domestic or wild type such that 
$(\overline{W}, \overline{S})$  is isomorphic to the projected Coxeter system  of $(W,S)$ 
and such that  the map  $s_i$ to $s_i$, for $i \in \{1, \ldots ,n\}$, defines an isomorphism $\varphi$ from $(\overline{W}, \overline{S})$ to the  projected Coxeter system
(see Proposition and Definition~\ref{def:basic} (e)). It follows from 
Proposition~\ref{thm:isom_normalform} (a) that also $W$ is  isomorphic to a semi-direct
product of $\overline{W}$ with the root lattice $\overline{L} =L(\overline{\Phi})$, and by 
Proposition~\ref{thm:isom_normalform} (b) the action of $\overline{W}$ on 
$\overline{L}$ is the action of the Coxeter group $\overline{W}$ on its 
root lattice. Therefore, $\mathcal{W}$ 
and $W$ are isomorphic groups. In particular,  $\varphi$ can be extended to an isomorphism  between $(\mathcal{W}, \mathcal{S})$  and $(W,S)$ by sending $s_{1^*}$ to $s_{1^*}$, which is the first part of assertion (b).
The second part of (b) is Theorem~3.21 (b) of \cite{BMW25}.
 \end{proof}

We can conclude that 
the extended Coxeter systems  of star type and the hyperbolic covers of extended Weyl groups describe the same objects, see the next theorem (and
also \cite{ST97}).

\begin{theorem}\label{Thm:Isomorphy}
 Let $(\mathcal{W}, \mathcal{S})$ be an extended
Coxeter system  of star type.
Then there is an extended Weyl system $(W,S)$ such that its 
hyperbolic cover $(\tilde{W}, \tilde{S})$ is isomorphic to 
$(\mathcal{W}, \mathcal{S})$. Conversely, every hyperbolic cover of an extended Weyl system 
is isomorphic to an extended Coxeter system of star type.
\end{theorem}
\begin{proof}
 The first part of the assertion follows from  Proposition~\ref{Prop:CharExtendedWeylNotTubular} (b) and Proposition~\ref{Prop:hypcoverWild}. The second part is a consequence of 
 Proposition~\ref{Prop:CharExtendedWeylNotTubular} (a) and Proposition~\ref{Prop:hypcoverWild} if the extended Weyl system is of domestic or wild type. If the extended Weyl system is of tubular type, then it is \cite[Theorem~3.21 (a)]{BMW25}.
\end{proof}

\subsection{The group $\mathcal{W}$ as a reflection group}
The group $\mathcal{W} = \overline{W} \ltimes L(\overline{\Phi})$ acts as group of affine linear maps on $\overline{V}$, as noted by van der Lek \cite{Lek, Lek2}. There is also a linear representation of  $(\mathcal{W}, \mathcal{S})$, where the elements in $\mathcal{S}$ act as reflections.

%Let $(W,S)$ hte the Coxeter system related to the extended Coxeter system $(\mathcal{W}, \mathcal{S})$.
Let $(\tilde{W}, \tilde{S})$ be the hyperbolic cover of an extended Weyl group $(W,S)$, which is isomorphic to $(\mathcal{W}, \mathcal{S})$ (see Theorem~
\ref{Thm:Isomorphy}). Then $\tilde{W}$ acts faithfully on the $\RR$-space $\tilde{V}$,
and $\tilde{s}_i \in \tilde{S}$ acts as the reflection with respect to the root $\alpha_i \in B$ on 
$\tilde{V}$ (see Section~\ref{Sec:HyperbolicCover}). From this we conclude a linear representation for $\mathcal{W}$.

\begin{proposition}\label{Prop:LinearRepCoxEntended}
The map sending $s_i \in \mathcal{S}$  to $\tilde{s}_i \in \tilde{S}$ defines a faithful linear representation of $\mathcal{W}$ on $\tilde{V}$.
\end{proposition}
\begin{proof}
    This is an immeadiat consequence of Theorem~\ref{Thm:Isomorphy}.
\end{proof}

Recall that the extended Weyl system $(W,S)$ is defined as group of reflections on a space $V$, that 
 $\overline{V}$ is a subspace of $V$ and that $V = \overline{V} \oplus \RR a$.
(see Remark~\ref{Rem:RootSystem}). 

\medskip
\begin{Lemma}\label{Prop:LinearRepCoxEntended2}
The following hold.
\begin{itemize}
\item[(a)] The element $ws_iw^{-1} \in \mathcal{W}$ is the reflection of $\tilde{V}$ with respect to $w(\alpha_i)$.
\item[(b)] It is $\Phi = \mathcal{W}(\{\alpha_1, \ldots , \alpha_n, \alpha_{1^*}\}) = \overline{\Phi} + \ZZ a$.
\item[(c)] The set of reflections in $\mathcal{W}$ is 
$\mathcal{T}:= \bigcup_{w\in \mathcal{W}} w^{-1}\mathcal{S}w = 
\{(\overline{w} s_i \overline{w}^{-1}, k \overline{w}(\alpha_i)) \mid 1 \leq i \leq n~\mbox{and}~k \in \ZZ\}$.
\item[(d)] It is $s_i(a) = a$, for $i \in \{1, \ldots, n, 1^*\}$, and the projection 
$\tilde{V} \rightarrow \tilde{V}/ \RR a$  induces an epimorphism from $\mathcal{W}$
to $\overline{W}$.
\end{itemize}
\end{Lemma}
\begin{proof}
Assertion (a) holds by  Lemma~\ref{lem:Reflections} (b).

It is $\alpha_1 , \alpha_1 +a \in \Phi$. Application of $(1,k\alpha_1)$ to $\alpha_1$ and to $\alpha_1 +a$ yields that $\alpha_1 +\ZZ a \subset \Phi$.
Since $\overline{\Gamma}$ is a tree, all the roots in $\overline{\Phi}$ are
in one $\overline{W}$-orbit, and $\Phi$ is a subset of $\overline{\Phi} + \ZZ a$. As the latter set is invariant under the set of $\mathcal{W}$ generating reflections $\mathcal{S}$, equality holds, which 
shows (b).
Assertion (c) is a consequence of (a). Since $\Phi \subset \overline{V} + \RR a$, it is 
$s(a) = a$ for all $s \in \mathcal{S}$. Further notice, that $s_{1^*}$ equals $s_1$ in its action on $\mathcal{V}/\RR a$, which shows the last part of (d).
\end{proof}

\section{The Coxeter transformations}\label{sec:CoxTrans}

In this section we prove two properties for Coxeter transformations in extended Weyl systems (of all three types)
that also hold for Coxeter elements in (finite) Coxeter systems. First we show that, as in finite Coxeter systems, all the Coxeter transformations are conjugate in $W$. Then we will recall that for a Coxeter transformation $c \in W$ the length of $c$ with respect to the set $T$ of all reflections is $|S|$.

We  use the notation as introduced in Figure~\ref{def:GenCoxDiag} and abbreviate the simple reflection
with respect to the root $\alpha_{(i,j)}$ by $s_{ij}$. As defined in \ref{def:basic} (g), the element
$$
c = \Big(\prod_{i= 1}^r\prod_{j = 1}^{p_i} s_{ij}\Big) s_1s_{1^*} \in W
$$
is a Coxeter transformation of $(W,S)$, and every two Coxeter transformations only differ by the
ordering of the reflections $s_{ij}$,  where $1 \leq i \leq r$ and $1 \leq j \leq p_i$.

It is a consequence of  \cite[Chapter V, \S~6, Lemma~1]{Bou02} that in the definition of the Coxeter transformation the ordering of the reflections $s_{ij}$,  where $1 \leq i \leq r$ and $1 \leq j \leq p_i$, does not matter up to conjugacy. For the convenience of the reader we give a direct proof of this fact.

\begin{Lemma}\label{lem:ConjCoxTrans}
Let $(W,S)$ be an extended Weyl system. Then in $W$ all the Coxeter transformations  are conjugated.
\end{Lemma}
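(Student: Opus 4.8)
The plan is to exploit that two Coxeter transformations differ only by the order of the arm reflections $s_{ij}$ ($1\le i\le r$, $1\le j\le p_i$), while the block $s_1 s_1^*$ stays fixed at the end, and to pass between such orderings by conjugations that preserve this shape. First I record the commutation relations read off from the bilinear form: $s_{ij}$ and $s_{i'j'}$ commute unless $i=i'$ and $|j-j'|=1$, and $s_{ij}$ commutes with both $s_1$ and $s_1^*$ unless $j=1$, since the innermost vertices $(i,1)$ are the only arm vertices adjacent to the centre $1$ and to $1^*$. Consequently arm reflections from distinct arms commute, so the product $\prod_{ij}s_{ij}$, and hence $c$ itself, depends only on the relative order of the reflections inside each individual arm; equivalently it depends only on the induced acyclic orientation of the forest $F$ of arms, the disjoint union of the $r$ paths $(i,1)-(i,2)-\cdots-(i,p_i)$.

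The core move is a source-to-sink flip realised by conjugation, which I establish as follows. Let $v=(i,j)$ with $j\ge 2$ and suppose that inside $c$ the vertex $v$ is a source of the orientation of $F$, i.e.\ both its arm-neighbours occur after it. Using the commutations above I bring $s_v$ to the front, write $c=s_v R' s_1 s_1^*$ with $R'$ the remaining arm factors, and conjugate: $s_v^{-1}c\,s_v=R' s_1 s_1^* s_v$. Since $j\ge 2$, $s_v$ commutes with $s_1$ and $s_1^*$, so this equals $R' s_v s_1 s_1^*$, which is again a Coxeter transformation, now with $v$ a sink and all other relative orders unchanged; the reverse flip is symmetric. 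This is exactly where the innermost vertices must be excluded: for $v=(i,1)$ the reflection $s_v$ does not commute with $s_1 s_1^*$, so the rotation produces $\cdots s_1 s_1^* s_{i,1}$, which cannot be returned to Coxeter-transformation form; indeed it alters the translation vector $\TR$ (Lemma~\ref{lem:translation_vector}) and hence leaves the set of Coxeter transformations altogether. Overcoming this restriction is the main obstacle.

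It therefore remains to show that any two acyclic orientations of $F$ are connected by source-to-sink flips performed \emph{only} at non-innermost vertices. As the arms are independent it suffices to treat a single path $1-2-\cdots-p$ with flips allowed at $\{2,\ldots,p\}$ but forbidden at the inner endpoint $1$. Encoding an orientation by the string of edge directions $(e_1,\ldots,e_{p-1})$, a flip at an interior vertex $k$ toggles the pair $(e_{k-1},e_k)$ precisely when these disagree, while a flip at the outer leaf $p$ toggles $e_{p-1}$ unconditionally. I plan to drive every string to the reference orientation $1\to 2\to\cdots\to p$ by a rightward cascade: to correct a wrong edge $e_j$ one flips at $j+1$, which fixes $e_j$ but pushes the defect to $e_{j+1}$, and one repeats along $j+2,\ldots,p-1$ until the defect reaches $e_{p-1}$, where the leaf-flip at $p$ absorbs it. Crucially this corrects even $e_1$ using only flips at $2,\ldots,p$, never at the forbidden inner vertex, so all $2^{p-1}$ orientations are reachable.

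Finally I assemble the pieces. Any prescribed arm-ordering corresponds to an orientation of $F$; this orientation is reached from that of $c$ by a sequence of admissible flips; each flip replaces the current Coxeter transformation by a conjugate one by the flip move above; and two orderings inducing the same orientation give \emph{equal} group elements by the cross-arm commutation relations. Hence every Coxeter transformation is conjugate to $c$, and all Coxeter transformations are conjugate in $W$.
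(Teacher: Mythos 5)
Your proof is correct, but it takes a genuinely different route from the paper's. The paper starts from the same observation that only the order of the arm reflections can vary (the suffix $s_1s_{1^*}$ being fixed by definition, and distinct arms commuting), but then argues algebraically arm by arm: identifying the arm subsystem $\langle s_{i1},\ldots,s_{ip_i}\rangle$ of type $A_{p_i}$ with $\Sym(p_i+1)$, the two arm products become $(p_i+1)$-cycles, and an element $x_i$ of the point stabilizer $P_i=\langle s_{i2},\ldots,s_{ip_i}\rangle$ conjugates one cycle to the other; since $P_i$ avoids the innermost reflection $s_{i1}$, the conjugator $x=x_1\cdots x_r$ commutes with $s_1$, $s_{1^*}$ and with the other arms --- the same structural fact you exploit through the restriction $j\geq 2$. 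You instead run the classical source-to-sink flip argument for Coxeter elements of trees, restricted to non-innermost vertices, which obliges you to prove connectivity of the restricted flip graph on each path; the paper's symmetric-group computation sidesteps that combinatorial lemma and exhibits the conjugator in one step per arm, while your argument is more conceptual and would adapt to other tree-shaped diagrams with a forbidden vertex, at the price of the extra lemma. One imprecision in that lemma as you state it: the cascade ``to correct a wrong edge $e_j$ one flips at $j+1$'' is only admissible when $e_{j+1}=1$; if the pushed defect meets another misoriented edge, the vertex $j+1$ is neither a source nor a sink and the cascade stalls (already for $(e_1,e_2)=(0,0)$ no flip at vertex $2$ is available). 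This is repaired by processing the misoriented edges from the rightmost one inward, so that to the right of the edge being corrected all edges are already correct and every flip in the cascade is admissible; with that ordering made explicit, your reachability claim, and hence the whole proof, is complete.
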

\begin{proof}
Let $d$ be another Coxeter transformation of $(W,S)$. Because of the shape of the diagram and the definition of a Coxter transformation, we have
$$
d = \Big(\prod_{i= 1}^r\prod_{j = 1}^{p_i} s_{i\pi_i(j)} \Big) s_1s_{1^*},
$$
where $\pi_i$ is in $\Sym(p_i)$ for $1 \leq i \leq r$. Thus $c_i:= \prod_{j = 1}^{p_i} s_{ij}$ and $d_i:= \prod_{j = 1}^{p_i} s_{i \pi_i(j)}$
are Coxeter elements in a Coxeter system $\Lambda_i$ of type $A_{p_i}$ with set of simple reflections
$S_i:= \{s_{ij}~|~1 \leq j \leq p_i\}$ for every $1 \leq i \leq r$.

We claim that $P_i:= \langle s_{i2}, \ldots , s_{ip_i}\rangle$ acts transitively on the set of Coxeter elements in
$W_i: = \langle S_i\rangle$, more precisely that 
there is $x_i \in P_i$ such that $c_i^{x_i} = d_i$. 
As $(W_i, S_i)$ is a Coxeter system of type $A_{p_i}$,
the Coxeter elements $c_i$ and $d_i$ are $(p_i+1)$-cycles in 
$W_i \cong \Sym(p_i+1)$. 
Without loss of generality we can assume that $c_i = (1~2~\ldots ~p_i+1)$ and $d_i = ( 1 ~ k_2 ~ \ldots ~ k_{p_i+1})$. Then $x_i \in P_i$ defined by $x_i(j) := k_j$ for $2 \leq j \leq p_i+1$ maps $c_i$ onto $d_i$.

Now it follows, as every element of $P_i$ commutes with $s_1$, $s_{1^*}$ as well as with every element in $S_k$ for $k \neq i$, that $x: = x_1\cdots x_r$ conjugates $c$ onto $d$.
\end{proof}

We recall the reflection length of a Coxeter transformation $c$ of the extended Weyl group $W$, that is, we recall the number
$$
\ell_{T}(c):=\min\lbrace n\in \NN_{0}\mid t_{1}\cdots t_{n}=c,~t_{i}\in T \text{ for }1\leq i \leq n \rbrace.
$$
It  will be an ingredient of the proof of Theorem~\ref{thm:main}.

\begin{proposition} \label{cor:RefLength} 
Let $W$ be an extended Weyl system  of rank $n$. Then $\ell_T(c) = n$ for every Coxeter transformation $c \in W$. 
\end{proposition}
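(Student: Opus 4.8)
The plan is to prove the two inequalities $\ell_T(c)\le n$ and $\ell_T(c)\ge n$ separately. Since the statement is quoted from \cite{BWY20}, I only sketch the argument in the domestic and wild cases treated in this section, where $\dim R=1$ and the translation-vector machinery of Subsection~\ref{subsec:NormalForm} applies; the tubular case is part of the elliptic analysis of \cite{BWY20}. The upper bound is immediate: by \ref{def:basic} (g) the Coxeter transformation $c=(\prod_{i,j}s_{ij})\,s_1s_{1^*}$ is a product of $n=|S|$ simple reflections, all lying in $T$, so $\ell_T(c)\le n$. The real content is the lower bound, and the main obstacle is an ``off by one'' phenomenon caused by the one-dimensional radical $R$: a naive move-space count gives only $\ell_T(c)\ge n-2$, and the radical seems to let one save reflections. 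The translation vector is the tool that closes this gap.

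First I would record two facts. Projecting $c$ through $\rho$ and using $\overline{\alpha_{1^*}}=\overline{\alpha_1}$ (so that $\rho(s_1s_{1^*})=s_{\overline{\alpha_1}}^2=\idop$) gives $\overline c:=\rho(c)=\prod_{i,j}s_{\overline{\alpha_{ij}}}$, which is a Coxeter element of the finite parabolic $W':=\langle s_{\overline{\alpha_{ij}}}\rangle\cong\prod_i A_{p_i}$ of $\overline W$. Hence $\Mov(\overline c)=\overline{V}':=\spanr\{\overline{\alpha_{ij}}\}$ has dimension $n-2$, while $\Fix(\overline c)\supseteq(\overline{V}')^{\perp}\ne 0$: indeed the form is positive definite on $\overline{V}'$ (type $A$), so $\overline V=\overline{V}'\oplus(\overline{V}')^{\perp}$ with $(\overline{V}')^{\perp}$ one-dimensional, and $\overline c\in W'$ fixes $(\overline{V}')^{\perp}$ pointwise. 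Second, using Lemma~\ref{lem:translation_vector} with $\TR(s_{1^*})=\alpha_1$, $\TR(s)=0$ for the other simple reflections, and the cocycle rule, I compute $\TR(c)=\overline{\alpha_1}$; and since $\overline B$ is a basis of $\overline V$, this vector does not lie in $\overline{V}'=\Mov(\overline c)$.

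Now take any factorization $c=t_1\cdots t_k$ with $t_i=s_{\beta_i}\in T$ and set $U:=\spanr(\overline{\beta_1},\dots,\overline{\beta_k})$, so $\dim U\le k$. On one hand $\Mov(\overline c)=\Mov(\overline{t_1}\cdots\overline{t_k})\subseteq\sum_i\RR\,\overline{\beta_i}=U$. On the other hand a short induction on the cocycle of Lemma~\ref{lem:translation_vector} (c), using $\TR(s_{\beta_i})\in\RR\,\overline{\beta_i}$ from Corollary~\ref{cor:normalform_prop} and the fact that each $s_{\overline{\beta_i}}$ preserves $U$, shows $\TR(c)\in U$ as well. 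Since $\TR(c)=\overline{\alpha_1}\notin\Mov(\overline c)$, I conclude $U\supseteq\Mov(\overline c)\oplus\RR\,\overline{\alpha_1}$, hence $\dim U\ge(n-2)+1=n-1$ and therefore $k\ge n-1$.

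It remains to exclude $k=n-1$, which is the crux. If $k=n-1$ then $n-1\le\dim U\le k=n-1$, so $U=\overline V$ and $\overline{\beta_1},\dots,\overline{\beta_{n-1}}$ are linearly independent. Applying Lemma~\ref{lem:orth_complement_lin_ind_roots} in $\overline V$ to $\overline c=s_{\overline{\beta_1}}\cdots s_{\overline{\beta_{n-1}}}$ gives $\Fix(\overline c)=\{v\in\overline V\mid(\overline{\beta_i},v)=0\ \text{for all }i\}$, which is the radical of the induced form on $\overline V$; this radical is $0$ because that form is non-degenerate, $R$ being by definition the radical of $(-,-)$ on $V$. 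This contradicts $\Fix(\overline c)\ne 0$. Hence $k\ge n$, and together with the upper bound $\ell_T(c)=n$.
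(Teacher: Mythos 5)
Your proof is correct for the scope it addresses, but it takes a genuinely different route from the paper for the simple reason that the paper contains no proof of this proposition at all: it is imported verbatim from \cite{BWY20} (Corollary~3.4 there), which covers all three types, including the tubular one, where $\dim R=2$ and your translation-vector machinery (specific to $\dim R=1$) is unavailable. Within the domestic/wild scope every step of your argument checks out: $\overline{c}$ is a Coxeter element of the finite parabolic of type $A_{p_1}\times\cdots\times A_{p_r}$, so $\Mov(\overline{c})=\spanr\{\overline{\alpha}_{(i,j)}\}$ has dimension $n-2$ while $\Fix(\overline{c})$ contains the one-dimensional orthogonal complement of that positive definite subspace; the cocycle computation $\TR(c)=\overline{\alpha}_1\notin\Mov(\overline{c})$ is exactly the device the paper itself deploys inside the proof of Theorem~\ref{thm:main}; and the containments $\Mov(\overline{c})\subseteq U$ and $\TR(c)\in U$ (the latter via Corollary~\ref{cor:normalform_prop} and iterating Lemma~\ref{lem:translation_vector}(c), which indeed needs no independence hypothesis) give $k\geq n-1$. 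Two remarks. First, your invocation of Lemma~\ref{lem:orth_complement_lin_ind_roots} ``in $\overline{V}$'' is a transfer: the lemma is stated for roots in $\Phi\subseteq V$, but its proof uses only linear independence and bilinearity, so the transfer is harmless --- it deserves an explicit sentence, though. Second, your crux step excluding $k=n-1$ can be replaced by a one-line parity argument: every $t\in T$ is a genuine reflection on $V$ (its root is non-isotropic), hence has determinant $-1$, so any reflection factorization of $c$ has length $k\equiv n \pmod 2$, and combined with $k\geq n-1$ this yields $k\geq n$ without needing $\Fix(\overline{c})\neq 0$ or the nondegeneracy of the induced form on $\overline{V}$. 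Your fixed-space argument is nevertheless sound, and it has the virtue of locating geometrically why the radical cannot be used to save a reflection; just be aware that as a replacement for the citation it proves the proposition only in the domestic and wild cases, so the statement ``of arbitrary type'' still rests on \cite{BWY20}.
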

\begin{proof}
We follow the argument in \cite{BMW25}. We can apply Scherk's theorem to 
$\tilde{c} \in \tilde{W}$ \cite{ST89}, and obtain $n = \ell_{\tilde{T}}(\tilde{c}) = \ell_T(c)  $ by Proposition~\ref{Prop:hypcoverWild} (and in the elliptic case by \cite{BMW25}).
\end{proof}
\section{A remark on Coxeter systems whose diagram is a star} \label{sec:AuxProofMain}

In this section we present some information on the root system $\overline{\Phi}$ for a Coxeter system $(\overline{W}, \overline{S})$ whose diagram is a star.
We use the notation as introduced in Definition~\ref{def:basic} (c). In particular, $\overline{B}$  is  a simple system for $\overline{\Phi}$, and we use the numbering of the roots in  $\overline{B}$ as given in Figure~1. The reflection  with respect to the root $\alpha_{(i,j)} \in \overline{B}$ will again be abbreviated by $s_{ij}$. The results obtained in this section will be used in the proof of Theorem~\ref{thm:main}.
We start with a little observation.

\begin{Lemma}\label{lem:GenCoxGrp}
Let $\beta_1, \ldots, \beta_n \in \overline{\Phi}$ such that $\overline{W }= \langle s_{\beta_1},\ldots,s_{\beta_{n}} \rangle $. Then $L(\overline{\Phi})= L(\{\beta_{1},\ldots,\beta_{n}\})$.
\end{Lemma}
\begin{proof}
As the Coxeter diagram is a tree, all the  reflections are conjugated in $\overline{W}$.  In particular, for every $\beta\in \overline{\Phi}$ there exists $w\in \overline{W}=\langle s_{\beta_{1}},\ldots, s_{\beta_{n}} \rangle$ such that $s_{w(\beta_{1})}=ws_{\beta_{1}}w^{-1}=s_{\beta}$. It follows that $\beta=\pm w(\beta_{1}) \in \spanz(\beta_{1},\ldots,\beta_{n})$.
Therefore  $L(\overline{\Phi})\subseteq \spanz(\beta_{1},\ldots,\beta_{n})$.
\end{proof}

\begin{Lemma}\label{lem:RootsHyperbolic}
Let $\alpha \in \overline{\Phi}$ such that $\displaystyle \alpha = \alpha_1 + \sum_{i= 1}^{r}\sum_{j = 1}^{p_i} \lambda_{ij} \alpha_{(i,j)}$,
where $\lambda_{ij} \in \NN_0$. Then for each $i \in \{1, \ldots r\}$ either
\begin{itemize}
\item[(a)]  $\lambda_{ij} = 0$ for $1 \leq j \leq p_i$, or
\item[(b)]  there is $m_i \in \{1, \ldots , p_i \}$ such that $\lambda_{ij} = 1$ for $1 \leq j \leq m_i$ and $\lambda_{ij} = 0$ for $m_i < j \leq p_i$.
\end{itemize}
\end{Lemma}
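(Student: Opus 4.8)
The plan is to identify the roots of the stated form as the images of the central simple root $\alpha_1$ under the parabolic subgroup generated by the non-central reflections, and then to read off the coefficient pattern from that orbit. Write $\overline{W}_0 := \langle s_{ij} \mid 1 \leq i \leq r,\ 1 \leq j \leq p_i\rangle$. Since $\alpha$ has all coefficients $\geq 0$, it is a positive root. I would first prove, by induction on the height $\operatorname{ht}(\alpha) = 1 + \sum_{ij}\lambda_{ij}$, that $\alpha \in \overline{W}_0\cdot\alpha_1$. The base case $\alpha = \alpha_1$ is immediate. For $\alpha \neq \alpha_1$ the key is to locate a branch vertex $(i,j)$ with $(\alpha,\alpha_{(i,j)}) > 0$. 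Given such a vertex, $s_{ij}(\alpha) = \alpha - (\alpha,\alpha_{(i,j)})\alpha_{(i,j)}$ is again a positive root: it is a root, and the only positive root negated by $s_{ij}$ is $\alpha_{(i,j)}$ itself, which $\alpha$ is not since its $\alpha_1$-coefficient is $1$. This reflection leaves the central coefficient equal to $1$ and strictly lowers the height, so the inductive hypothesis gives $s_{ij}(\alpha) \in \overline{W}_0\cdot\alpha_1$, whence $\alpha = s_{ij}(s_{ij}(\alpha)) \in \overline{W}_0\cdot\alpha_1$.

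\textbf{Locating the vertex.} Suppose toward a contradiction that $(\alpha,\alpha_{(i,j)}) \leq 0$ for every branch vertex. Expanding $2 = (\alpha,\alpha) = (\alpha,\alpha_1) + \sum_{ij}\lambda_{ij}(\alpha,\alpha_{(i,j)})$ and using $\lambda_{ij}\geq 0$ gives $(\alpha,\alpha_1) \geq 2$. On the other hand, in the star the center is joined precisely to the vertices $(i,1)$, so a direct computation yields $(\alpha,\alpha_1) = 2 - \sum_{i=1}^{r}\lambda_{i,1}$. Hence $\sum_i \lambda_{i,1} \leq 0$, forcing every $\lambda_{i,1} = 0$. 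Because the support of a root is connected in the diagram (an easy induction along reflections, which I would include, using that $(\alpha,\beta)\neq 0$ requires $\beta$ to lie in or be adjacent to $\operatorname{supp}(\alpha)$), the vanishing of all $\lambda_{i,1}$ propagates to every branch and gives $\alpha = \alpha_1$, a contradiction.

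\textbf{Computing the orbit.} Vertices of distinct branches are non-adjacent, so the corresponding simple roots are orthogonal; hence the subgroups $W_{A_{p_i}} := \langle s_{ij} \mid 1 \leq j \leq p_i\rangle$ pairwise commute, each fixing the simple roots of the other branches, and $\overline{W}_0 = \prod_{i} W_{A_{p_i}}$. Writing $w = w_1\cdots w_r$ with $w_i \in W_{A_{p_i}}$ one obtains $w(\alpha_1) = \alpha_1 + \sum_i\big(w_i(\alpha_1) - \alpha_1\big)$, where each $w_i(\alpha_1)-\alpha_1$ lies in the span of branch $i$. Thus it suffices to treat a single branch. But $\{\alpha_1\}\cup\{\alpha_{(i,1)},\ldots,\alpha_{(i,p_i)}\}$ spans a positive-definite sub-root-system of type $A_{p_i+1}$ (a path) in which $\alpha_1$ is an end node and $W_{A_{p_i}}$ is the parabolic fixing that end; the orbit of the end node consists exactly of the contiguous sums $\alpha_1 + \alpha_{(i,1)} + \cdots + \alpha_{(i,m_i)}$ for $0 \leq m_i \leq p_i$. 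Reading off coefficients gives precisely alternatives (a) (the case $m_i = 0$) and (b).

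\textbf{Main obstacle.} The difficulty is that in the wild case the induced form on $\overline{V}$ is only Lorentzian, so one cannot invoke positive-definiteness (Cauchy--Schwarz) to force $(\alpha,\alpha_{(i,j)}) \in \{-1,0,1\}$, and the naive second-difference constraints on the $\lambda_{ij}$ are insufficient to pin down the pattern. The argument is designed to sidestep this: the height-reducing reflection in the first two paragraphs is produced solely from the exact identity $(\alpha,\alpha_1) = 2 - \sum_i\lambda_{i,1}$ together with connectedness of supports, and the final coefficient shape is extracted from the genuinely positive-definite type-$A$ sub-root-systems along each branch, where the contiguous-sum description of positive roots holds unconditionally.
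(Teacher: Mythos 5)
Your proof is correct, but it takes a genuinely different route from the paper, essentially reversing the logical order of two results. The paper's proof is a direct coefficient computation: assuming $\lambda_{\ell 1}=0$ (resp.\ $\lambda_{\ell 1}\neq 0$) and taking $m$ maximal with $\lambda_{\ell m}\neq 0$, it applies the explicit word $s_{\ell 2}\cdots s_{\ell m}$ (resp.\ $s_{\ell 1}\cdots s_{\ell m}$) to $\alpha$ and reads the constraints $\lambda_{\ell m}=0$ (resp.\ $\lambda_{\ell m}=1$ and $\lambda_{\ell,j-1}=\lambda_{\ell m}$ for $2\leq j\leq m$) off the single requirement that the image be a positive root; no other input about the form is needed. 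You instead first prove that every root of the stated shape lies in the orbit $\overline{W}_0\cdot\alpha_1$ --- which is in substance Corollary~\ref{Coro:conj}, a statement the paper \emph{deduces from} this lemma --- and then compute that orbit by splitting $\overline{W}_0$ into commuting type-$A$ branch factors and quoting the contiguous-sum description of positive roots in a path with distinguished end node. Your height-descent step is sound: the identity $(\alpha,\alpha_1)=2-\sum_i\lambda_{i,1}$, combined with $(\alpha,\alpha)=2$ (valid by \ref{def:basic}(f), since $\overline{\Phi}=\overline{W}(\overline{B})$ and $\overline{W}$ is an isometry group) and connectedness of supports, produces the required vertex with $(\alpha,\alpha_{(i,j)})>0$ without any appeal to positive definiteness, which as you note fails in the wild case; the facts you invoke along the way (a simple reflection permutes the remaining positive roots, supports of roots are connected, the product decomposition $\overline{W}_0=\prod_i W_{A_{p_i}}$ with $w(\alpha_1)=\alpha_1+\sum_i(w_i(\alpha_1)-\alpha_1)$) are all standard and correctly applied. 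What each approach buys: the paper's computation is shorter and entirely self-contained, using only that roots of the Coxeter system $(\overline{W},\overline{S})$ have coefficients of one sign; yours is more structural, delivers the transitivity statement of Corollary~\ref{Coro:conj} as a by-product rather than a consequence, and makes transparent why the admissible coefficient patterns are exactly the type-$A$ end-node orbits along each branch.
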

\begin{proof}
First suppose that $\lambda_{\ell 1} = 0$ for some $\ell \in  \{1, \ldots ,r\}$. Furthermore, assume that $\lambda_{\ell m} \neq 0$ for some  $m \in \{2, \ldots , p_{\ell} \}$. Without loss of generality we may assume that $m$ is maximal with this property, that is, $\lambda_{\ell j} = 0$ for all $j >m$. We calculate
$$
s_{\ell 2} \cdots s_{\ell m}(\alpha) =  \alpha_1 - \lambda_{\ell m}\alpha_{(\ell,1)} +\sum_{j = 2}^{m}( \lambda_{\ell, j-1}  - \lambda_{\ell m}) \alpha_{(\ell,j)}
+ \sum_{i\neq \ell}^r \sum_{j = 1}^{p_i} \lambda_{ij} \alpha_{(i,j)}.
$$
Since $s_{\ell 2} \cdots s_{\ell m}(\alpha)$ is a positive root it follows $ \lambda_{\ell m} = 0$, contrary to our assumption. This shows (a).

Now assume $\lambda_{\ell 1} \neq 0$. Further  let $m \in \{2, \ldots , p_{\ell}\} $ such that $\lambda_{\ell m} \neq 0$, but $\lambda_{\ell j} = 0$ for all $j >m$. Then 
$$s_{\ell 1}\cdots s_{\ell m} (\alpha) = \alpha_1 + (1- \lambda_{\ell m}) \alpha_{(\ell,1)} + \sum_{j = 2}^m(\lambda_{\ell, j-1} - \lambda_{\ell m}) \alpha_{(\ell,j)} + \sum_{ i \neq \ell}^r \sum_{j = 1}^{p_i} \lambda_{ij} \alpha_{(i,j)}.$$ 
Since $s_{\ell 2} \cdots s_{\ell m}(\alpha)$ is a positive root and $\lambda_{\ell m} \geq 1$ we obtain $\lambda_{\ell m} = 1$, hence $(1- \lambda_{\ell m}) = 0$. As in the first part of this proof, this yields 
$\lambda_{\ell, j-1} - \lambda_{\ell m} = 0$ for $j \in \{ 2, \ldots , m \}$, which shows (b).
\end{proof}

\begin{corollary}\label{Coro:conj}
The parabolic subgroup  $P: = \langle s_{ij}~|~1 \leq i \leq r, 1 \leq j \leq p_i\rangle$ of $\overline{W}$ acts transitively on the set of roots $\left(\alpha_1 +  \Lambda \right) \cap \overline{\Phi}$, where $\Lambda:= \spanz ( \alpha_{(1,1)},\ldots,\alpha_{(r,p_r)})$. In particular, $P$ acts by conjugation transitively on the set of reflections $\{t \in T \mid W = \langle P, t\rangle \}$.
 \end{corollary}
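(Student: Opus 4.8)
The plan is to prove transitivity by showing that $\alpha_1$ itself belongs to the set $(\alpha_1 + \Lambda) \cap \overline{\Phi}$ and that every root in this set can be carried to $\alpha_1$ by an element of $P$. First I would check that $P$ stabilises the set. Each generator $s_{ij}$ alters only the $\alpha_{(i,j)}$-coordinate of a vector written in the basis $\overline{B}$, so it fixes the $\alpha_1$-coefficient and hence maps the affine subspace $\alpha_1 + \Lambda$ into itself; since $P \subseteq \overline{W}$ and $\overline{W}$ preserves $\overline{\Phi}$, the group $P$ acts on $(\alpha_1 + \Lambda)\cap\overline{\Phi}$. Clearly $\alpha_1 = \alpha_1 + 0$ lies in this set, so it suffices to show that the $P$-orbit of $\alpha_1$ is all of $(\alpha_1 + \Lambda)\cap\overline{\Phi}$.

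Next, let $\alpha \in (\alpha_1 + \Lambda)\cap\overline{\Phi}$ and write $\alpha = \alpha_1 + \sum_{i,j}\lambda_{ij}\alpha_{(i,j)}$ with $\lambda_{ij}\in\ZZ$. Since the $\alpha_1$-coefficient equals $1 > 0$, the root $\alpha$ is positive and all $\lambda_{ij}\geq 0$; this is the (standard) point that makes Lemma~\ref{lem:RootsHyperbolic} applicable. That lemma then gives the staircase description: for each arm $i$ there is $m_i \in \{0,\ldots,p_i\}$ with $\lambda_{ij} = 1$ for $1 \leq j \leq m_i$ and $\lambda_{ij} = 0$ for $j > m_i$. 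I would argue by induction on the height $\sum_{i,j}\lambda_{ij} = \sum_i m_i$. If the height is $0$ then $\alpha = \alpha_1$ and we are done; otherwise choose $i$ with $m_i \geq 1$ and apply the reflection $s_{im_i}$ at the top of arm $i$.

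The key computation is that $(\alpha_{(i,m_i)},\alpha) = 1$. Because the root system is simply laced, $(\alpha_{(i,m_i)},\alpha_{(i,m_i)}) = 2$, so the reflection reduces to $s_{im_i}(\alpha) = \alpha - (\alpha_{(i,m_i)},\alpha)\,\alpha_{(i,m_i)}$. By the staircase pattern the only basis vectors adjacent to $\alpha_{(i,m_i)}$ that occur in $\alpha$ with non-zero coefficient are $\alpha_{(i,m_i)}$ itself (coefficient $1$) and either its predecessor $\alpha_{(i,m_i-1)}$ when $m_i \geq 2$, or the centre $\alpha_1$ when $m_i = 1$ (coefficient $1$ in both cases); its successor $\alpha_{(i,m_i+1)}$ has coefficient $0$. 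Hence $(\alpha_{(i,m_i)},\alpha) = 2\cdot 1 + (-1)\cdot 1 = 1$, and $s_{im_i}(\alpha) = \alpha - \alpha_{(i,m_i)}$. This new vector again lies in $(\alpha_1 + \Lambda)\cap\overline{\Phi}$ (it is the $\overline{W}$-image of a root, and its $\alpha_1$-coefficient is still $1$) and its staircase parameter for arm $i$ has dropped to $m_i - 1$ while the others are unchanged, so its height is strictly smaller. By the inductive hypothesis $s_{im_i}(\alpha) \in P\cdot\alpha_1$, and applying $s_{im_i}$ gives $\alpha \in P\cdot\alpha_1$, completing the induction.

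I expect the only genuine content to be the pairing computation $(\alpha_{(i,m_i)},\alpha) = 1$ at the top of a non-empty arm, which is precisely where the staircase structure of Lemma~\ref{lem:RootsHyperbolic} (top coefficient exactly $1$, next coefficient $0$) is used; the stability of the set under $P$, the positivity of $\alpha$, and the height bookkeeping are all routine. The one subtlety worth spelling out is the positivity argument, since it is what licenses the application of Lemma~\ref{lem:RootsHyperbolic}, and one should note that the reduced vector stays inside $(\alpha_1+\Lambda)\cap\overline{\Phi}$ so that the induction closes.
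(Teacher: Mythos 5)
Your proof is correct and follows essentially the same route as the paper: both rest on the staircase description of Lemma~\ref{lem:RootsHyperbolic} and reduce a root of $\left(\alpha_1+\Lambda\right)\cap\overline{\Phi}$ to $\alpha_1$ by reflections in $P$. The only difference is bookkeeping --- the paper clears a whole arm at once via the product $s_{\ell 1}\cdots s_{\ell m}$ (as computed inside the proof of Lemma~\ref{lem:RootsHyperbolic}) and inducts on the number of arms, whereas you peel one simple root at a time from the top of an arm and induct on height, additionally making explicit the positivity argument that the paper leaves tacit.
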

 
 \begin{proof}
Let  $\beta = \sum_{i= 1}^{r}\sum_{j = 1}^{p_i} \lambda_{ij} \alpha_{(i,j)} \in \Lambda$ and $\alpha = \alpha_1 + \beta$.
If $\lambda_{i1} \neq 0$ for some $1 \leq i \leq r$, then there is $m_i \in \{1, \ldots , p_i \}$ such that $\lambda_{ij} = 1$ for $1 \leq j \leq m_i$ and $\lambda_{ij} = 0$ for $m_i \leq j \leq p_{i}$
by Lemma~\ref{lem:RootsHyperbolic}.  If we set $w: = s_{\ell 1}\cdots s_{\ell m}$ then we get, as in the proof of  Lemma~\ref{lem:RootsHyperbolic}, that $w(\alpha)$ is a linear combination
of simple roots that do not contain a root $\alpha_{(i,j)}$ for some $1 \leq j \leq p_{i}$.  Now the assertion follows by induction on $r$.

We obtain the last assertion as the set of reflections with respect to the set of roots $\left(\alpha_1 +  \Lambda \right) \cap \overline{\Phi}$ equals $\{t \in T \mid W = \langle P, t\rangle \}$ by Lemma~\ref{lem:GenCoxGrp}.
\end{proof} 

\section{Hurwitz transitivity for Coxeter transformations in extended Coxeter systems of star type} \label{sec:ProofMain}

The aim of this section is to prove the main results of this paper, that is 
Theorems~\ref{thm:main} and \ref{thm:AllgAussagehyperCover}.
First we collect some results on the Hurwitz action in 
Coxeter groups that we will need in the proof.

\subsection{Hurwitz action in Coxeter systems} \label{subsec:HurwitzInCoxeter}

Let $(W,S)$ be an arbitrary Coxeter system of finite rank.
Let as usual be the length of $w \in W$ with respect to $S$
$$\ell_S(w):=\min \lbrace k\in \NN_{0}\mid w = s_1 \cdots s_k, ~s_1, \ldots , s_k \in S\rbrace.$$
Further let $T:=\bigcup_{w\in W}wSw^{-1}$
be the set of reflections in $(W,S)$, and  $\ell_T$ the length function on $W$ with respect to $T$. A {\em parabolic Coxeter element} is an element in $W$
that is a Coxeter element in a parabolic subgroup of $W$ (see \cite{BDSW14}).
Let $c_0$ be a parabolic Coxeter element of $W$ of length $\ell_T(c_0) = n$.
Then the set of reduced reflection factorizations of $c_0$ in $W$ is
$$
\text{Red}_{T}(c_0):=\left\lbrace (t_{1},\ldots,t_{n})\in T^{n}\mid c_0=t_{1}\cdots t_{n}\right\rbrace.
$$
Next  we quote three results on the Hurwitz action in Coxeter systems, whose proofs can be
found at the given references.

\begin{theorem}[{\cite[Theorem~1.3]{BDSW14}}]\label{thm:HurwitzInCox}
Let $(W,S)$ be a Coxeter system and $c_0$ a parabolic Coxeter element in $W$.
Then the Hurwitz action of ${\mathcal B}_n$ is  transitive on $\Red_{T}(c_0)$.
\end{theorem}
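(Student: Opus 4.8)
The plan is to prove transitivity by induction on the reflection length $n=\ell_T(c_0)$, after first reducing the statement about a parabolic Coxeter element to the case of a genuine Coxeter element of the whole group. Throughout I would use the geometric picture: $W$ acts on its Tits space $V$, every reflection $t$ has a root $\beta$ with $\Mov(t)=(t-\idop)(V)=\spanr(\beta)$, and the braid moves of Definition~\ref{def:hurwitz} replace a factorization by another with the same product. The two facts I would lean on repeatedly are that $\sigma_i$ only alters the $i$-th and $(i{+}1)$-th entries (so the book-keeping is local), and that conjugation is Hurwitz-equivariant: conjugating every entry of a factorization by a fixed $w$ carries $\Red_T(c_0)$ bijectively onto $\Red_T(wc_0w^{-1})$ and commutes with the $\sigma_i$.

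First I would carry out the reduction. A parabolic Coxeter element is a Coxeter element of a parabolic subgroup, and every parabolic subgroup is conjugate to a standard one $W_I=\langle I\rangle$ with $I\subseteq S$; by Hurwitz-equivariance of conjugation I may therefore assume $c_0$ is a standard Coxeter element of $W_I$. The key input here is that every reduced $T$-factorization of an element of $W_I$ uses only reflections lying in $W_I$. I would prove this geometrically: for a reduced factorization $t_1\cdots t_n=c_0$ with $t_i=s_{\beta_i}$, the roots $\beta_1,\ldots,\beta_n$ form a basis of $\Mov(c_0)$ — using that $\dim\Mov(c_0)=n$ for a parabolic Coxeter element together with the fixed-point criterion of Lemma~\ref{lem:orth_complement_lin_ind_roots} — and since $\Mov(c_0)\subseteq\spanr(\alpha_s\mid s\in I)$ each $\beta_i$ is a root of $W_I$. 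Hence $(W,S)$ may be replaced by $(W_I,I)$, and from now on $c_0=c=s_1\cdots s_n$ is a Coxeter element of $(W,S)$ with $n=|S|$.

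The engine of the induction is a \emph{peeling lemma}: every reduced factorization $(t_1,\ldots,t_n)$ of $c$ is Hurwitz-equivalent to one of the form $(s,t_2',\ldots,t_n')$, where $s\in S$ is a prescribed initial simple reflection chosen so that $sc$ is a Coxeter element of the rank-$(n{-}1)$ standard parabolic $W_{S\setminus\{s\}}$ (an initial letter of a reduced word for $c$ works). Granting the lemma, the tail $(t_2',\ldots,t_n')$ is a reduced factorization of the parabolic Coxeter element $sc$ of reflection length $n-1$, so by the induction hypothesis all tails lie in a single Hurwitz orbit; prepending the now-fixed first entry $s$ then shows that all of $\Red_T(c)$ is one orbit. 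The base case $n\le 1$ is immediate, and the case $n=2$ is a direct computation inside the (finite or infinite) dihedral group generated by two roots.

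The hard part will be the peeling lemma, i.e.\ showing that Hurwitz moves can always bring the prescribed simple reflection $s$ into the first slot; this is precisely where the possibly \emph{infinite} nature of $W$ makes the classical finite argument unavailable, since then $\ell_T$ need no longer equal $\dim\Mov$ and the reflection subgroup generated by $\beta_1,\ldots,\beta_n$ can be infinite. I would attack it in two stages. The local stage is a rank-two analysis: a single braid move replaces an adjacent pair $(t_i,t_{i+1})$ by another factorization of $t_it_{i+1}$ into two reflections, giving complete control over which reflection sits first inside a dihedral group on two roots. The global stage is to show that, as the factorization ranges over its Hurwitz orbit, the possible first entries $t_1$ are exactly the reflections $t$ with $t\le c$ in the absolute order (those with $\ell_T(t)+\ell_T(tc)=\ell_T(c)$), in particular the prescribed simple $s$. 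For this I would work entirely inside the finite-dimensional subspace $\Mov(c)$, on which $c$ acts without nonzero fixed vectors: the roots of a reduced factorization always form a basis of $\Mov(c)$, any root $\beta\le c$ arises as some $\beta_i$ in a suitable factorization, and repeated rank-two moves transport it to the front. Organising this transport so that it terminates — rather than cycling forever, which is the genuine danger in the infinite case — is the crux, and I expect to need an inductive minimality argument on a length or height statistic attached to the root being moved.
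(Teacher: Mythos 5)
Your overall architecture (reduce to a Coxeter element, then peel off a prescribed simple reflection and induct on $\ell_T$) is in fact the architecture of the proof in \cite{BDSW14}, which this paper quotes without reproving. But the geometric scaffolding you propose to hang it on is false outside finite Coxeter groups, and that is exactly the case the theorem is needed for here (the projected system $(\overline{W},\overline{S})$ is infinite in wild type). Concretely: for a Coxeter element $c$ of an affine Coxeter system of rank $n$ one has $\ell_T(c)=n$ but $\dim \Mov(c)\leq n-1$, since $c$ fixes the radical of the Tits form pointwise, so $\ker(c-\idop)\supseteq \RR\delta$. Hence your claim that ``$\dim\Mov(c_0)=n$ for a parabolic Coxeter element'' fails, the roots of a reduced factorization cannot form a basis of $\Mov(c_0)$ (already the defining factorization $(s_1,\ldots,s_n)$ has $n$ roots, too many for an $(n-1)$-dimensional space, and the simple roots need not even lie in $\Mov(c)$), and your later assertion that $c$ acts on $\Mov(c)$ ``without nonzero fixed vectors'' fails as well: in type $\widetilde{A}_1$, $\Mov(c)=\RR\delta$ is fixed pointwise. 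These are all instances of Carter's lemma, which is a finite-group fact; both your reduction step (reflections of a reduced factorization of $c_0$ lie in $W_I$) and the global stage of your peeling argument rest on it, so both collapse in the infinite case. Note also that your base case $n=2$ already needs the parabolic-closure statement ($t_1t_2=s_0s_1$ forces $t_1,t_2\in\langle s_0,s_1\rangle$), so it inherits the same gap rather than being ``a direct computation.''

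The second, decisive gap is that the peeling lemma --- which you correctly identify as the crux --- is left as a hope (``I expect to need an inductive minimality argument on a length or height statistic''), and that lemma \emph{is} the theorem. The cited proof replaces all movement-space reasoning by a combinatorial tool valid in arbitrary Coxeter groups: the strong exchange property for reduced reflection factorizations (due to Dyer). If $t\in T$ satisfies $\ell_T(tc)<\ell_T(c)$ and $(t_1,\ldots,t_n)\in\Red_T(c)$, then $tc=t_1\cdots \widehat{t_i}\cdots t_n$ for some $i$. Applying this with $t=s_1$, the first letter of a simple factorization $c=s_1\cdots s_n$ (noting $s_1c=s_2\cdots s_n$ is a parabolic Coxeter element with $\ell_T(s_1c)=n-1$), and then Hurwitz-shifting the $i$-th entry to the front, the new first entry is forced to equal $s_1$ exactly, because its product with the remaining $n-1$ entries is $c$ while those entries multiply to $s_1c$. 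Induction on $n$ then finishes the argument, and the closure fact you wanted as an \emph{input} (all reflections of a reduced factorization lie in the parabolic) falls out as a \emph{corollary}. So the route you sketch is not salvageable as written; the missing ingredient is precisely this exchange property, not a termination statistic for rank-two transport.
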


The following lemma is a result about the Hurwitz orbits of non-reduced reflection factorizations. 

\begin{Lemma}[{\cite[Lemma 2.3]{WY19}}] \label{lem:same_refl}
Let $(W,S)$ be a Coxeter system, $w \in W$ and   $w=t_{1}\cdots t_{n+2k}$ where $n = \ell_S(w)$,  $k \in \NN$ and  $t_{i}\in T$ for $1\leq i \leq n+2k$. 
Then there exists a braid $\sigma \in \mathcal{B}_{n+2k}$ 
and reflections $r_{1},\ldots,r_{n},r_{i_1},\ldots ,r_{i_k} \in T$
such that 
$$
\sigma(t_{1},\ldots,t_{n+2k})=(r_{1},\ldots,r_{n},r_{i_1},r_{i_1}, \ldots , r_{i_k},r_{i_k}).
$$
\end{Lemma}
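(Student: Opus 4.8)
The plan is to argue by induction on $k$, reducing the whole statement to a single structural step: showing that any reflection factorization of length strictly greater than $\ell_T(w)$ can be transformed, using the Hurwitz action, into one in which two \emph{adjacent} entries coincide. Throughout one uses that the Hurwitz moves $\sigma_i^{\pm 1}$ preserve the product $t_1\cdots t_{n+2k}=w$, so that after deleting an adjacent cancelling pair $(r,r)$ the remaining tuple is again a reflection factorization of $w$. For the induction note that when $k\geq 1$ the length of the factorization is $n+2k=\ell_S(w)+2k\geq \ell_T(w)+2>\ell_T(w)$ (since $\ell_T(w)\leq\ell_S(w)$), so the factorization is never reduced and the structural step always applies; the first $n$ entries of the final tuple are allowed to be an arbitrary (possibly non-reduced) factorization of $w$, so no reducedness of that block is needed and the induction terminates at $k=0$.

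The structural step is the heart of the proof and the main obstacle. First I would show that a factorization $(t_1,\dots,t_m)$ of length $m>\ell_T(w)$ cannot have linearly independent roots $\beta_1,\dots,\beta_m$ (where $t_i=s_{\beta_i}$): this is the contrapositive of the standard geometric fact, the analogue of Lemma~\ref{lem:orth_complement_lin_ind_roots}, that linearly independent roots force $\Fix(s_{\beta_1}\cdots s_{\beta_m})=\{\beta_1,\dots,\beta_m\}^{\perp}$ and hence $\ell_T=m$. Choosing the minimal index $p$ with $\beta_1,\dots,\beta_p$ linearly dependent, the prefix $u:=t_1\cdots t_{p-1}$ is reduced and $\beta_p\in\Span(\beta_1,\dots,\beta_{p-1})=\Mov(u)$. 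The key claim is then an exchange-type statement: there is an index $j\in\{1,\dots,p-1\}$ with $t_p=(t_{j+1}\cdots t_{p-1})\,t_j\,(t_{j+1}\cdots t_{p-1})^{-1}$, equivalently $\beta_p=\pm\,t_{j+1}\cdots t_{p-1}(\beta_j)$. Granting this, sliding $t_j$ to the right past $t_{j+1},\dots,t_{p-1}$ by the braid $\sigma_{p-2}\cdots\sigma_{j+1}\sigma_j$ conjugates it into exactly $t_p$, producing the adjacent equal pair at positions $p-1,p$. Establishing the existence of such $j$ is the genuinely technical point; I would derive it from the linear relation expressing $\beta_p$ in terms of the transported roots $\gamma_i:=t_{i+1}\cdots t_{p-1}(\beta_i)$, which all lie in the $(p-1)$-dimensional space $\Span(\beta_1,\dots,\beta_{p-1})$ preserved by the reflections $t_{i+1},\dots,t_{p-1}$, together with a positivity/sign analysis of the roots, exactly as in the reflection-length exchange argument.

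Finally I would record the elementary ``transport'' computation that lets a cancelling pair be pushed to the end. If positions $i,i+1$ carry $(r,r)$ and position $i+2$ carries $b$, then applying $\sigma_{i+1}$ and then $\sigma_i$ yields
\[
(\dots,r,r,b,\dots)\ \longmapsto\ (\dots,r,b,b^{-1}rb,\dots)\ \longmapsto\ (\dots,b,b^{-1}rb,b^{-1}rb,\dots),
\]
so the pair reappears one position to the right (now equal to $b^{-1}rb$) while $b$ moves left. Iterating moves the pair to the last two coordinates. Stripping it off leaves a length $n+2(k-1)$ reflection factorization of $w$, to which the induction hypothesis applies, producing $(r_1,\dots,r_n,r_{i_1},r_{i_1},\dots,r_{i_{k-1}},r_{i_{k-1}})$; reattaching the final pair gives the desired form and completes the proof.
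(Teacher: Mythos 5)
Your outer skeleton (induct on $k$; create one adjacent cancelling pair; push it to the end with the two-move transport, which you compute correctly for the paper's convention of $\sigma_i$; strip it off and recurse) is the right shape, but the step you yourself identify as the heart --- the exchange-type claim --- is \emph{false}, and with it the proof collapses. Counterexample: let $W=\Sym(3)$ of type $A_2$ with $S=\{(1\,2),(2\,3)\}$ and take $(t_1,t_2,t_3)=((1\,2),(2\,3),(1\,2))$, so $w=t_1t_2t_3=(1\,3)$ and $\ell_T(w)=1<3$. The roots $\beta_1,\beta_2$ of $t_1,t_2$ are the two simple roots, linearly independent, the prefix $t_1t_2$ is $T$-reduced, and $\beta_3=\beta_1$ lies in their span, so $p=3$ is exactly your minimal index. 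Your claim demands some $j\in\{1,2\}$ with $t_3\in\{t_2,\;t_2t_1t_2\}=\{(2\,3),(1\,3)\}$, but $t_3=(1\,2)$ is neither (and the same failure occurs for the opposite conjugation orientation --- note, incidentally, that under the paper's convention sliding $t_j$ rightwards produces $(t_{j+1}\cdots t_{p-1})^{-1}t_j(t_{j+1}\cdots t_{p-1})$ at position $p-1$, not $ut_ju^{-1}$). No ``positivity/sign analysis, exactly as in the reflection-length exchange argument'' can rescue this: the exchange condition is a theorem about $\ell_S$ and words in \emph{simple} reflections, and it is known to fail word-by-word for reduced $T$-words --- absolute order does not satisfy the exchange property along a given reduced $T$-factorization, which is precisely what this example exhibits. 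The Hurwitz orbit does contain the needed tuple, e.g.\ $\sigma_2$ yields $((1\,2),(1\,2),(1\,3))$, but it arises because $t_3$ equals the \emph{non-adjacent} letter $t_1$ and can be transported leftwards unchanged, a mechanism outside your single-slide dichotomy. In general one must treat both the case of a repeated root $\beta_p=\pm\beta_j$ and the case of a genuine circuit of size at least three, and the latter is the real technical content here: the paper itself offers no proof (it quotes \cite{WY19}), and even in finite root systems the analogous circuit lemma of Lewis and Reiner \cite{LR16} requires a delicate rank-two analysis rather than an exchange argument.

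A secondary, repairable issue: you justify ``more factors than $\ell_T(w)$ forces dependent roots'' via $\Fix(s_{\beta_1}\cdots s_{\beta_m})=\{\beta_1,\ldots,\beta_m\}^{\perp}$, but this only bounds $\ell_T$ below by the codimension of the fixed space, and when the Tits form is degenerate (affine type, and the extended spaces of this paper) that codimension can be strictly smaller than $m$ even for linearly independent roots --- in $\widetilde{A}_1$ two independent simple roots span a space on which the form has rank one. The fact you need is Dyer's theorem that a reflection factorization with linearly independent roots is reduced in \emph{every} Coxeter group; it is true, but it rests on his reflection-cocycle argument, not on the fixed-space count. So: cite the correct independence criterion, and replace the exchange claim by an actual pair-creation lemma; as it stands the proposal has a genuine gap at its central step.
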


The proof of the next lemma also appears  implicitly  in the proof of \cite[Theorem 1.1]{LR16}.

\begin{Lemma}[{\cite[Lemma 2.6]{WY19}}] \label{lem:conj_hurw}
Let $(W,S)$ be a Coxeter system and let $t_1, \ldots, t_{n}, t \in T$. Then $(t_1, \ldots, t_{n},t,t)$ and $(t_1, \ldots, t_{n}, t^w,t^w)$ lie in the same Hurwitz orbit for every $w \in \langle t_1, \ldots, t_{n} \rangle$. 
\end{Lemma}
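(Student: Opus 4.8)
The plan is to reduce the statement to the case of single conjugators and then exploit one elementary Hurwitz identity repeatedly. Throughout I write $t^w := w^{-1}tw$ and use that every reflection is an involution; I write $\mathbf{g} \sim \mathbf{g}'$ if two tuples lie in the same Hurwitz orbit. The one identity I would establish first is a \emph{push-through-a-doubled-pair} move: for reflections $b,t$ one has $(b,t,t) \sim (t^b, t^b, b)$, obtained by applying $\sigma_1^{-1}$ and then $\sigma_2^{-1}$ from Definition~\ref{def:hurwitz}, since $\sigma_1^{-1}(b,t,t) = (btb,b,t) = (t^b,b,t)$ and $\sigma_2^{-1}(t^b,b,t) = (t^b, btb, b) = (t^b,t^b,b)$. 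Read forward this pushes a reflection from the left of a doubled pair to its right while conjugating the pair; read backwards (after relabelling) it pushes a reflection leftward through a doubled pair. The crucial bookkeeping fact is that pushing a reflection rightward and then leftward through the \emph{same} doubled pair conjugates it twice by the pair-reflection, hence trivially, so the reflection returns unchanged.

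Next I would record a closure property. Let $P(w)$ abbreviate the assertion ``$(t_1,\dots,t_n,t,t)\sim(t_1,\dots,t_n,t^w,t^w)$ for every reflection $t$''. Then $P(w_1)$ and $P(w_2)$ together imply $P(w_1w_2)$: apply $P(w_1)$ to $t$, then $P(w_2)$ to the reflection $t^{w_1}$, giving the conjugate $t^{w_1w_2}$ with the prefix unchanged. Consequently it suffices to verify $P(r)$ for $r$ ranging over a generating set of $\langle t_1,\dots,t_n\rangle$.

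The core step realizes, for each $j$, conjugation of the pair by $X_j := t_j^{\,t_{j+1}\cdots t_n}$ with the prefix $(t_1,\dots,t_n)$ restored exactly, by a round trip. First transport $t_j$ to the right end by successively applying $\sigma_j,\sigma_{j+1},\dots,\sigma_{n-1}$; this leaves $t_{j+1},\dots,t_n$ merely shifted one place leftward (unchanged as reflections) and places $X_j = t_j^{\,t_{j+1}\cdots t_n}$ immediately left of the pair. Push $X_j$ through the pair by the elementary move, conjugating the pair to $(t^{X_j},t^{X_j})$ and sending $X_j$ to the far right. Finally transport $X_j$ back to position $j$: it first passes leftward through the two new copies of $t^{X_j}$, returning to $X_j$ because the double conjugation is trivial, and then leftward through $t_n,\dots,t_{j+1}$, which conjugates it by $t_nt_{n-1}\cdots t_{j+1}$; since $(t_{j+1}\cdots t_n)(t_n\cdots t_{j+1}) = e$, this returns the reflection to $t_j$ and simultaneously restores $t_{j+1},\dots,t_n$ to positions $j+1,\dots,n$. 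The outcome is precisely $(t_1,\dots,t_n,t^{X_j},t^{X_j})$, so $P(X_j)$ holds. A short downward induction (using $X_n = t_n$ and solving $t_{j} = (t_{j+1}\cdots t_n)\,X_j\,(t_{j+1}\cdots t_n)^{-1}$) shows $\langle X_1,\dots,X_n\rangle = \langle t_1,\dots,t_n\rangle$, and the closure property then yields $P(w)$ for every $w \in \langle t_1,\dots,t_n\rangle$.

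The main obstacle is conceptual rather than computational: one cannot conjugate the doubled pair ``in place'', because any Hurwitz move altering the pair is forced to drag the conjugating reflection through it, conjugating that reflection as well (and disturbing the prefix when the reflection is not adjacent). The device that overcomes this is the round trip: conjugating the pair costs exactly one pass of a reflection through it, and arranging a symmetric return ensures that both the transported reflection and all of $t_1,\dots,t_n$ come back to their original values -- at the price of conjugating the pair by the transported reflection $X_j$ rather than by $t_j$, which is harmless once one checks that the $X_j$ still generate the whole subgroup.
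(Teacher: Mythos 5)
Your proposal is correct: the elementary move $(b,t,t)\sim(t^b,t^b,b)$, the closure of the property under composition of conjugators, and the round-trip computation (traveler returned to $t_j$ since $X_j^{t_n\cdots t_{j+1}}=t_j$, pair conjugated by $X_j$, prefix restored, and $\langle X_1,\ldots,X_n\rangle=\langle t_1,\ldots,t_n\rangle$ by downward induction from $X_n=t_n$) all check out against the paper's Hurwitz conventions in Definition~\ref{def:hurwitz}. The paper gives no proof of Lemma~\ref{lem:conj_hurw} but simply cites \cite{WY19}, and your argument is essentially the standard one used there --- reduction to a generating set followed by explicit transport moves --- so it is a complete and faithful substitute for the cited proof.
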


\medskip

 \subsection{Hurwitz action in extended Coxeter systems}\label{Sec:HAinCoxExtendedRootLattice}

In this section we prove transitive Hurwitz action in extended Coxeter systems of star type (Theorem~\ref{thm:AllgAussage}), which establishes a uniform approach to transitive Hurwitz action in the hyperbolic covers of the extended Weyl groups.
We  use the notation introduced in Section~\ref{Sec:CoxExtensionRootLattice}.
Let $(\overline{W}, \overline{S})$ be a Coxeter system, whose Coxeter diagram $\overline{\Gamma}$ is a star, let
$\overline{S} = \{s_1, \ldots, s_n\}$ be such that $1$ is the label of a vertex of maximal degree in $\overline{\Gamma}$, and let $\mathcal{S} = \overline{S}
\cup \{s_{1*}\}$. Then $(\mathcal{W}, \mathcal{S})$ is 
an extended Coxeter system of  star type.

\begin{remark}\label{rem:Choice1}
There is more than one vertex $v$  of maximal degree in $\overline{\Gamma}$ precisely if $\overline{\Gamma}$ is of type 
$A_n$. In this case each vertex of  the diagram is a possible choice for  the label $1$.
\end{remark}

Analogously to Section~\ref{sec:CoxTrans} we define a 
Coxeter transformation $c$ in $\mathcal{W}$ by
$$c:= c_0s_1s_{1^*} ~\mbox{where}~ c_0 = \prod_{s \in \overline{S}\setminus{\{s_1\}}} s$$ is a Coxeter element in the parabolic subgroup $P:= \langle s_2, \ldots , s_n\rangle$ of $\overline{W}$.
%, which is itself a Coxeter group.

\begin{Lemma} \label{cor:RefLengthExtCox} 
Let $(\mathcal{W}, \mathcal{S})$ be an extended Coxeter system. Then $\ell_{\mathcal{T}}(c) = |\mathcal{S}| = n+1$. 
\end{Lemma}
\begin{proof}
This is a consequence of Proposition~\ref{Prop:CharExtendedWeylNotTubular}
and Proposition~\ref{cor:RefLength} for the domestic and wild cases, and
\cite[Proposition~1.2]{BMW25} for the tubular case.
\end{proof}

 We set 
$$ \Red_{\mathcal{T}}(c):=\{(t_1, \ldots , t_{n+1}) \in \mathcal{T}^{n+1} \mid c =t_1 \cdots t_{n+1}\}.$$

In this section we will prove the following theorem, which is stronger than Theorem~\ref{thm:main}.

\begin{theorem}\label{thm:AllgAussage}
Let $(\mathcal{W}, \mathcal{S})$ be an extended  Coxeter 
system  of star type with set of reflections $\mathcal{T}$ and Coxeter transformation $c$. Then the Hurwitz action is transitive on $\Red_{\mathcal{T}}(c)$.
    \end{theorem}

Throughout this section we use several times the projection map from $\mathcal{W}$ to
$\overline{W}$ sending $w = (\overline{w},\TR(w))$ onto $\overline{w}$. The image $\overline{W}$ acts faithfully on $\overline{V} \subset \mathcal{V}$ and the reflection in $\mathcal{W}$ with respect 
to the root $\beta  = \overline{\beta} +k a\in \Phi$ where $\overline{\beta}$ is in $\overline{\Phi}$ and $k\in \ZZ$ maps to the reflection of $\overline{V}$ with respect to the root 
$\overline{\beta}$. In the following for a root $\beta \in \Phi$ we will write $s_\beta$ for the reflection of $\mathcal{V}$ with respect to $\beta$.
Before we prove Theorem~\ref{thm:AllgAussage}  we consider a special situation.

\begin{Lemma}\label{lem:trans_rk2}
Let $s_{\beta_{1}}s_{\beta_{2}}=s_{1}s_{1^{*}}$ with $\beta_{1},\beta_{2}\in \Phi$. Then $(s_{\beta_{1}},s_{\beta_{2}})$ and $(s_{1},s_{1^{*}})$ lie in the same Hurwitz orbit, that is, $\mathcal{B}_{2}(s_{1},s_{1^{*}})=\lbrace (t_{1},t_{2})\in \mathcal{T}^{2} \mid t_{1}t_{2}=s_{1}s_{1^{*}}\rbrace$.
\end{Lemma}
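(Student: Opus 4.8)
The plan is to reduce the assertion to Hurwitz transitivity inside the affine dihedral subgroup $D := \langle s_1, s_{1^*}\rangle$ and then invoke Theorem~\ref{thm:HurwitzInCox}. Write $c_0 := s_1 s_{1^*}$. First I would record that $c_0$ is a pure translation. Applying $\rho$ gives $\rho(c_0) = s_{\overline{\alpha_1}}s_{\overline{\alpha_{1^*}}} = s_{\overline{\alpha_1}}^2 = \idop$, since $\overline{\alpha_{1^*}} = \overline{\alpha_1}$ (because $\alpha_{1^*} = \alpha_1 + a$ with $a \in R$); hence $c_0 \in \ker(\rho) = E(L_a)$. Moreover Lemma~\ref{lem:translation_vector} yields $\TR(c_0) = s_{1^*}^{-1}\TR(s_1) + \TR(s_{1^*}) = 0 + \alpha_1 = \alpha_1$.

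Next, starting from an arbitrary factorization $s_{\beta_1}s_{\beta_2} = c_0$ with $\beta_1,\beta_2 \in \Phi$, I would show that both factors already lie in $D$. Applying $\rho$ to $s_{\beta_1}s_{\beta_2} = c_0$ gives $s_{\overline{\beta_1}}s_{\overline{\beta_2}} = \idop$ in $\overline{W}$, so $\overline{\beta_1}$ and $\overline{\beta_2}$ are proportional; since $\overline{\Phi}$ is reduced this forces $\overline{\beta_2} = \pm\overline{\beta_1}$. Writing $\beta_i = \overline{\beta_i} + k_i a$ with $\overline{\beta_i} \in \overline{\Phi}$ and $k_i \in \ZZ$ (Lemma~\ref{lem:root_system_I2}(b)), Lemma~\ref{lem:trv_for_ref}(b) gives $\TR(s_{\beta_i}) = k_i\overline{\beta_i}$, and then Lemma~\ref{lem:translation_vector}(c) together with $s_{\overline{\beta_1}}(\overline{\beta_1}) = -\overline{\beta_1}$ gives $\TR(s_{\beta_1}s_{\beta_2}) = n\,\overline{\beta_1}$ for some $n \in \ZZ$. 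Comparing with $\TR(c_0) = \alpha_1$ forces $n\,\overline{\beta_1} = \alpha_1$; taking the common norm of roots (all roots of $\overline{\Phi}$ have squared length $2$ by Proposition and Definition~\ref{def:basic}(f)) gives $2n^2 = 2$, so $n = \pm 1$ and $\overline{\beta_1} = \pm\alpha_1$, and likewise $\overline{\beta_2} = \pm\alpha_1$. Thus $\beta_1,\beta_2 \in \Phi' = \{\pm\alpha_1 + ka \mid k \in \ZZ\}$ by Lemma~\ref{lem:root_system_I2}(a), so $s_{\beta_1}, s_{\beta_2}$ are reflections of $D$.

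Finally, $(D, \{s_1, s_{1^*}\})$ is a Coxeter system of type $\widetilde{A_1}$ whose set of reflections is exactly $\{s_\beta \mid \beta \in \Phi'\}$, and $c_0 = s_1 s_{1^*}$ is a Coxeter element of it with $\ell_T(c_0) = 2$, as $c_0$ has infinite order and so is not a reflection. By Theorem~\ref{thm:HurwitzInCox} the Hurwitz action of $\mathcal{B}_2$ is transitive on the reduced reflection factorizations of $c_0$ in $D$. Both $(s_{\beta_1}, s_{\beta_2})$ and $(s_1, s_{1^*})$ are such factorizations by the previous paragraph, so they lie in a single $\mathcal{B}_2$-orbit; since every pair $(t_1,t_2) \in T^2$ with $t_1t_2 = c_0$ was shown to consist of reflections in $D$, this simultaneously establishes the claimed set equality $\mathcal{B}_2(s_1, s_{1^*}) = \{(t_1,t_2) \in T^2 \mid t_1t_2 = s_1 s_{1^*}\}$. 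The only genuine work is the norm computation confining $\beta_1,\beta_2$ to $\Phi'$; once the factors are trapped inside the affine dihedral group, the conclusion is immediate from Theorem~\ref{thm:HurwitzInCox}.
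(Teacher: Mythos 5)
Your proof is correct and takes essentially the same route as the paper's: project to $\overline{W}$, use the translation-vector calculus (Lemmas~\ref{lem:trv_for_ref} and~\ref{lem:translation_vector}) to force $\overline{\beta_1} = \pm\alpha_1$, trap both factors as reflections of the infinite dihedral subgroup $\langle s_1, s_{1^*}\rangle$, and conclude by Theorem~\ref{thm:HurwitzInCox}. The only cosmetic difference is that you deduce $n=\pm 1$ from the common squared length of roots, where the paper invokes the reducedness of $\overline{\Phi}$; both are valid one-line steps.
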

\begin{proof}
By considering the projection we get $\overline{s_{\beta_{1}}}\overline{s_{\beta_{2}}}=\overline{s_{1}}\overline{s_{1^{*}}}=1\in \overline{W}$. Therefore, we get $\overline{\beta_{1}}=\pm \overline{\beta_{2}}$
and we can assume
 $\beta:=\overline{\beta_{1}}=\overline{\beta_{2}}\in \overline{\Phi}^{+}=\text{span}_{\ZZ_{\geq 0}}(\overline{B})\cap \overline{\Phi}$. We have $\TR(s_{\beta_{i}})=\lambda_{i}\beta,~\lambda_{i}\in \ZZ$ for $i=1,2$ (see Lemma~\ref{lem:Multiplication}). Therefore we obtain by applying 
Lemma~\ref{lem:Multiplication} (a)
$$
\alpha_{1} = \TR (s_{1}s_{1^{*}})=\TR(s_{\beta_{1}}s_{\beta_{2}}) =
 s_{\beta}\left(\TR(s_{\beta_{1}})\right)+\TR(s_{\beta_{2}}) = 
s_{\beta}(\lambda_{1}\beta)+\lambda_{2}\beta =
 (\lambda_{2}-\lambda_{1})\beta.
$$
As the root system $\overline{\Phi}$ is reduced we get $\beta=\pm \alpha_{1}$. 
By Lemma~\ref{lem:Multiplication} (b)
$s_{\beta_{1}}$ and $s_{\beta_{2}}$ are reflections of the infinite dihedral group $\widetilde{W}:= \left\langle s_{1},s_{1^{*}} \right\rangle$ with simple system $S':=\lbrace s_{1},s_{1^{*}}\rbrace$. Therefore $s_{\beta_{1}}s_{\beta_{2}}$ is a reduced reflection factorization of the Coxeter element $s_{1}s_{1^{*}}$ in $\widetilde{W}$. By \cite[Theorem 1.3]{BDSW14} the factorizations $(s_{\beta_{1}},s_{\beta_{2}})$ and $(s_{1},s_{1^{*}})$ lie in the same Hurwitz orbit.
\end{proof}

\medskip
\noindent
{\bf Proof of Theorem~\ref{thm:AllgAussage}}
We number $\overline{S}$ such that the Coxeter element becomes $c=s_{2}\cdots s_{n-1}s_{1}s_{1^{*}}$.
(Notice also that by the argument in the proof of Lemma~\ref{lem:ConjCoxTrans} 
a Coxeter element with a different ordering of $s_{2},\ldots ,s_{n-1}$ is conjugate to $c$.)
Let $\alpha_{i}\in \overline{\Phi}$ such that $s_{\alpha_{i}}=s_{i}$ for $1\leq i \leq n$, and $s_{\alpha_{1^{*}}}=s_{1^{*}}$ as introduced in Section~\ref{Sec:HAinCoxExtendedRootLattice}. Fix a reduced factorization $(t_{1},\ldots,t_{n+1})\in R_{\mathcal{T}}(c)$. 
We prove the theorem by showing that there exists a braid $\tau\in \mathcal{B}_{n+1}$ such that 
$$
\tau (t_1, \ldots , t_{n+1})=(s_{1},\ldots,s_{n-1},s_{1},s_{1^{*}}).
$$

Consider the factorization $(\overline{t_{1}},\ldots,\overline{t_{n+1}})\in \overline{W}^{n+1}$ where $\overline{t_{i}}\in \overline{T}$ for $1\leq i \leq n+1$ that is induced by $(t_1, \ldots , t_{n+1}) \in R_{\mathcal{T}}(c)$. In $\overline{W}$ we
calculate
\[\overline{t_{1}}\cdots \overline{t_{n+1}}=\overline{c}=s_{2}\cdots s_{n-1}s_{1}\overline{s_{1^{*}}}=s_{2}\cdots s_{n-1},\]
where $\overline{s_{i}}=s_{i}$ for $1\leq i \leq n$ and $\overline{s_{1^{*}}}=s_{1}$. Since $\ell_{\overline{S}}(\overline{c})=n-1$, Lemma \ref{lem:same_refl} yields the existence of a braid $\tau_{1}\in \mathcal{B}_{n+1}$ and a reflection $t \in \overline{T}$ such that 
$$
\tau_{1}(\overline{t_{1}},\ldots,\overline{t_{n+1}})=(\overline{t'_{1}},\ldots,\overline{t'_{n-1}},t,t).
$$ 
Thus we have 
\[\overline{t'_{1}} \cdots \overline{t'_{n-1}}=\overline{t'_{1}} \cdots \overline{t'_{n-1}}tt=\overline{c}=s_{2}\cdots s_{n}.\]

Therefore, $\overline{t'_{1}} \cdots \overline{t'_{n-1}}$ is a reduced reflection factorization of the parabolic Coxeter element $\overline{c}$ of the Coxeter system $(\overline{W},\overline{S})$. By Theorem~\ref{thm:HurwitzInCox} there exists a braid $\tau_{2}\in \mathcal{B}_{n+1}$ such that 
$$
\tau_{2}(\overline{t'_{1}},\ldots,\overline{t'_{n-1}},t,t)=(s_{2},\ldots,s_{n},t,t).
$$ 
Applying the braid $\tau_{2}\tau_{1}$ to the initial factorization and using Lemma~\ref{Prop:LinearRepCoxEntended2}
(d), we obtain
\[\tau_{2}\tau_{1}(t_{1},\ldots,t_{n-1})=(t''_{1},\ldots,t''_{n-1},t_{a},t_{b})\]
where $t''_{1},\ldots,t''_{n-1},t_{a},t_{b} \in \mathcal{T}$, and
$\overline{t''_{i}}=s_{i+1}$ for $1\leq i \leq n-1$, and $\overline{t_{a}}=\overline{t_{b}} = t$.
Let $\alpha \in  \overline{\Phi}^{+}$ be such that    
$t=s_{\alpha}$. Then the normal form of $t_at_b$ is 
$(1, \lambda \alpha)$
for some $\lambda \in \ZZ$ by Lemma~\ref{lem:Multiplication} (a).

Therefore we obtain using Lemma~\ref{lem:Multiplication} (a), and setting
$\beta:=  \TR (t''_{1}\cdots t''_{n-1})$:
$$
\alpha_{1} = \TR(c) = \TR (t''_{1}\cdots t''_{n-1}t_{a}t_{b}) =
\beta + \TR (t_{a}t_{b}) =
\beta+\lambda \alpha.
 $$
The fact that $\overline{t''_{i}}=s_{i+1}$ for $1\leq i \leq n-1$ yields 
$\overline{t''_{1}}\cdots \overline{t''_{n-1}} \in P:= \langle  s_{2}, \ldots,
s_{n}\rangle \leq \overline{W}$ and 
$\beta\in \spanz(\alpha_{2},\ldots,\alpha_{n-1})$, say $\beta= \sum_{i=2}^{n} \lambda_i \alpha_i$ with $\lambda_i \in \ZZ$ for $2 \leq i \leq n$. As $\lbrace \alpha_{1},\alpha_{2},\ldots,\alpha_{n}\rbrace$ is linear independent in the $\RR$-span of $L(\overline{\Phi})$, it follows $\lambda\neq 0$ and therefore
\[\alpha =\frac{1}{\lambda}\alpha_{1}-\frac{1}{\lambda} \beta 
= \frac{1}{\lambda}\alpha_{1} - \sum_{i=2}^{n} \frac{\lambda_i }{\lambda}\alpha_i \in \overline{\Phi}^{+}.\]
The latter implies, as $\alpha \in \overline{\Phi}^+$ and $\overline{B} = \{\alpha_1, \ldots \alpha_n\}$ is a set of simple roots for the Coxeter group $\overline{W}$, that  $\lambda=1$ (see \cite[Proposition~2.1]{Deod82}). Hence $\alpha =\alpha_{1}-\beta$. 

By Corollary~\ref{Coro:conj} there exists $x\in P$ such that $t^{x}=s_{\alpha}^{x}=s_{1}$ and therefore we get  $\overline{t_{a}^{x}}=\overline{t_{a}}^{x}=s_{1}$ with Lemma~\ref{Prop:LinearRepCoxEntended2} (d). Since $\overline{t''_{i}}=s_{i+1}$ for $1\leq i\leq n-1$, Lemma \ref{lem:conj_hurw} yields the existence of a braid $\tau_{3}\in \mathcal{B}_{m}$ such that
$$
\tau_{3}(t''_{1},\ldots,t''_{m-1},t_{a},t_{b})=(t''_{1},\ldots,t''_{m-1},t^x_{a},t^x_{b}).
$$
Set $t'_a = t^x_a$ and $t'_b = t^x_b$, and observe that $\TR(t'_{a} t'_{b})=\lambda' \alpha_{1}$ for some $\lambda' \in \ZZ$.

Similiar as above we can use Lemma~\ref{lem:Multiplication} (a)
 to obtain
$$
\alpha_{1} =  \TR (c) = \TR(t''_{1}\cdots t''_{m-2} t'_{a} t'_{b}) = \beta +\TR (t'_{a} t'_{b}) = 
 \beta +\lambda' \alpha_{1}.
$$
This yields, as $\alpha_1$ and $\beta$ are linear independent, that  $\beta =0$ and $\lambda' = 1$. 

Since the roots related to the reflections $\overline{t''_{i}}$, where $1\leq i \leq n-1$,
are linearly independent, Lemma \ref{lem:zero_tran} yields that
\[\TR(t''_{1})=\ldots=\TR(t''_{m-2})=0,\]
and therefore $t''_{i}=s_{i+1}$ for $1\leq i \leq n-1$. From  $\overline{t'_{a}} = \overline{t'_{a}} = s_1$ and $\lambda'= 1$ follows $t'_{a}t'_{b}=s_{1}s_{1^{*}}$.  Therefore  by Lemma \ref{lem:trans_rk2} there exists a braid $\tau_{4}\in \mathcal{B}_{n+1}$ such that
\[\tau_{4}(s_{2},\ldots,s_{n-1},t'_{a},t'_{b})=(s_{2},\ldots,s_{n-1},s_{1},s_{1^{*}}).\]
Altogether, setting $\tau:=\tau_{4}\tau_{3}\tau_{2}\tau_{1}\in \mathcal{B}_{m}$, we obtain
\begin{align*}
\tau(t_{1},\ldots,t_{m})=(s_{1},\ldots,s_{n-1},s_{1},s_{1^{*}}),
\end{align*}
which proves the assertion.
 \hfill $\square$
 
 \medskip
\noindent
{\bf Proof of Theorem~\ref{thm:main}}
Every extended Weyl system $(W,S)$ of domestic or wild type is isomorphic to an extended Coxeter system of star type, and  
the set of reflections of $W$ is mapped onto  the set of 
reflections in the extended Coxeter group  (see Proposition~\ref{Prop:CharExtendedWeylNotTubular} (a)). Further every Coxeter transformation $c \in W$ is mapped onto a  Coxeter transformation in the
extended Coxeter group, and  
$\Red_T(c)$ is mapped onto $\Red_{\mathcal{T}}(c)$.
Therefore the assertion is a consequence of  Theorem~\ref{thm:AllgAussage}.
\hfill $\square$
\section{Two applications of the main theorem}\label{sec:application}

\subsection{The category of coherent sheaves over a weighted projective line}\label{subsec:apprep}
In the representation theory of finite dimensional algebras hereditary categories are an important tool, since they serve as prototypes for many phenomena appearing there. An important class of these objects are the hereditary ext-finite abelian $k-$categories with tilting object where $k$ is an algebraically closed field. They are classified up to derived equivalence by Happel in \cite{Hap01}. Up to derived equivalence they are the categories of coherent sheaves over a weighted projective line in the sense of Geigle and Lenzing \cite{GL87} and the categories of finite dimensional modules over a hereditary finite dimensional $k-$algebra.

One way to study their structure is to investigate the lattice of thick subcategories.

\begin{definition}\label{def:2outof3}
  Let $\mathcal{A}$ be an abelian category and $\mathcal{H}$ a full subcategory of $\mathcal{A}$. The category $\mathcal{H}$ is called \defn{thick} if it is abelian and closed under extensions.

If $H$ is a collection of objects in $\mathcal{A}$, we denote the smallest thick subcategory that contains $H$ by $\Thi(H)$ and call it the \defn{thick subcategory generated by $H$}.
\end{definition}

In \cite{BWY21} the authors show that one can attach to the category of coherent sheaves over a weighted projective line an extended Weyl group by proving the existence of a complete exceptional sequence that induces an extended Coxeter-Dynkin diagram as given in Figure \ref{def:GenCoxDiag}.
This information can be used to 'classify' to some extend all the thick subcategories of an hereditary ext-finite abelian category with tilting object that are generated by an exceptional sequence, as we will desribe next. First we
include the neccessary definitions.

\begin{definition}
In an abelian category $\mathcal{A}$ an object $E$  is called \defn{exceptional} if $\END_{\mathcal{A}}(E)=k$ and $\EXT_{\mathcal{A}}^{i}(E,E)=0$ for $i>0$. A pair $(E,F)$ of exceptional objects in $\mathcal{A}$ is called \defn{exceptional pair} provided it holds in addition $\EXT_{\mathcal{A}}^{i}(F,E)=0$ for $i\geq 0$. A sequence $\mathcal{E}=(E_{1},\ldots,E_{r})$ of exceptional objects in $\mathcal{A}$ is called \defn{exceptional sequence of length $r$} if $\EXT_{\mathcal{A}}^{s}(E_{j},E_{i})=0$ for $i<j$ and all $s>0$. An exceptional sequence in $\mathcal{A}$ is called \defn{complete} if the smallest thick subcategory which contain the sequence is $\mathcal{A}$.
\end{definition}

In order to formulate our 'classification' of the thick subcategories that are generated by an exceptional sequence we need some 
further notation in the extended Weyl group.

\begin{definition}\label{Def:Generating}
Let $(W,S)$ be an extended Weyl system, $T$ its set of reflections and $c$ a Coxeter transformation. 
Define a partial order on $W$, the so called \defn{absolute order}, by
\[x\leq y \text{ if  } \ell_{T}(x)=\ell_{T}(x)+\ell_{T}(x^{-1}y).\]

We let $[\idop,c]=\lbrace x\in W\mid \idop\leq x \leq c \rbrace$, and call the poset $([\idop,c], \leq )$ \defn{interval poset}.
The poset $[\idop,c]$ consists of the prefixes of the reduced reflection factorizations of $c$. We call the factorization $(t_{1},\ldots, t_{n})$ of $c$ \defn{generating}, if $W=\langle t_{1},\ldots,t_{n} \rangle$. The poset
$[\idop, c]^{\mathrm{gen}}$ is the subposet of $[\idop, c]$ whose elements are the prefixes of the reduced generating factorizations of $c$ ordered by the absolute order.
\end{definition}

An easy consequence of our main result Theorem \ref{thm:main} is that $[\idop,c]= [\idop,c]^{\mathrm{gen}}$ for a Coxeter transformation $c\in W$ and $W$ of domestic or wild type. For $W$ of tubular type $E_6^{(1,1)}$ the authors show in \cite{BWY21} that $[\idop,c]\neq [\idop,c]^{\mathrm{gen}}$ for each Coxeter transformation.
But notice that for $W$ of all three types, in $\tilde{W}$ it is always $[\idop,c]= [\idop,c]^{\mathrm{gen}}$ for a Coxeter transformation $c \in \tilde{W}$  by Theorem~\ref{thm:AllgAussagehyperCover}.
 
 \medskip
 
\noindent
{\bf Proof of Theorem~\ref{thm:mainRep}}
Let $\XX$ be a weighted projective line over an algebraically closed field $k$ of characteristic zero, $\COH(\XX)$ the category of coherent sheaves over $\XX$, $\Phi$ the associated generalized root system, $\tilde{W}$ the hyperbolic  cover of the associated extended Weyl group $W$ and $c \in \tilde{W}$ a Coxeter transformation.
As in  $\tilde{W}$ every reduced factorization of   $c$ is generating by Theorem~\ref{thm:AllgAussagehyperCover},
Condition (b) of \cite[Theorem 1.2]{BWY21} is satisfied for $(\tilde{W}, c)$. Condition (a) follows directly from Theorem~\ref{thm:AllgAussagehyperCover}.
Thus we conclude Theorem~\ref{thm:mainRep} from \cite[Theorem 1.2]{BWY21}. \qed

\subsection{Singularity theory} \label{subsec:singularity}
For details and definitions we refer to \cite{Ebe19} and the references therein.

Let $f: (\mathbb{C}^{k+1}, 0) \rightarrow (\mathbb{C}, 0)$ be a holomorphic function germ with an isolated singularity at the origin and let $k$ be even. Further assume that $f$ defines a \defn{simple elliptic singularity} in the sense of Saito \cite{Sai74} or a \defn{hyperbolic singularity}, that is, a singularity (up to stable equivalence) of the series 
$$
T_{p,q,r}: ~x^p+y^q+z^r+axyz,
$$
where $a \neq 0$, $2 \leq p \leq q \leq r$ and $\frac{1}{p}+\frac{1}{q}+\frac{1}{r}<1$ (see also \cite{Arn73}). These singularities are unimodal and by \cite{Gab74} a Coxeter-Dynkin diagram $\Gamma$ with respect to a distuingished base is given by a diagram as in Figure \ref{def:TreeSing}.
\begin{figure}[h!]
  \centering
  \begin{tikzpicture}[scale=3]

    \node (A6666) at (0,-4) [circle, draw, fill=black!50, inner sep=0pt, minimum width=4pt] {};
    \node (AA6666) at (0,-3.9) [] {\tiny{$(1,p-1)$}};
    \node (A666) at (0.5,-4) [circle, draw, fill=black!50, inner sep=0pt, minimum width=4pt] {};
    \node (AA666) at (0.5,-3.9) [] {\tiny{$(1,p-2)$}};
    \node (A66) at (0.75,-4) [] {$\ldots$};
    \node (A6) at (1,-4) [circle, draw, fill=black!50, inner sep=0pt, minimum width=4pt] {};
    \node (AA6) at (1,-3.9) [] {\tiny{$(1,2)$}};
    \node (B6) at (1.5,-4) [circle, draw, fill=black!50, inner sep=0pt, minimum width=4pt]{};
    \node (BB6) at (1.45,-3.9) [] {\tiny{$(1,1)$}};
    \node (C6) at (2,-4) [circle, draw, fill=black!50, inner sep=0pt, minimum width=4pt]{};
    \node (CC6) at (2,-4.1) [] {\tiny{$1$}};
    \node (D6) at (2.5,-4) [circle, draw, fill=black!50, inner sep=0pt, minimum width=4pt]{};
    \node (DD6) at (2.55,-3.9) [] {\tiny{$(3,1)$}};
    \node (E6) at (3,-4) [circle, draw, fill=black!50, inner sep=0pt, minimum width=4pt]{};
    \node (EE6) at (3,-3.9) [] {\tiny{$(3,2)$}};
    \node (E66) at (3.25,-4) [] {$\ldots$};
    \node (F6) at (3.5,-4) [circle, draw, fill=black!50, inner sep=0pt, minimum width=4pt]{};
    \node (FF6) at (3.5,-3.9) [] {\tiny{$(3,r-2)$}};
    \node (G6) at (4,-4) [circle, draw, fill=black!50, inner sep=0pt, minimum width=4pt]{};
    \node (GG6) at (4,-3.9) [] {\tiny{$(3,r-1)$}};
    \node (I6) at (2,-3.5) [circle, draw, fill=black!50, inner sep=0pt, minimum width=4pt]{};
    \node (II6) at (2,-3.4) [] {\tiny{$1^*$}};
    \node (K6) at (1.65,-4.2) [circle, draw, fill=black!50, inner sep=0pt, minimum width=4pt]{};
    \node (KK6) at (1.55,-4.15) []{\tiny{$(2,1)$}};
    \node (K66) at (1.3,-4.4) [circle, draw, fill=black!50, inner sep=0pt, minimum width=4pt]{};
    \node (KK66) at (1.2,-4.35) []{\tiny{$(2,2)$}};
    \node (K66666) at (1.12,-4.5) []{\ldots};
    \node (K666) at (0.95,-4.6) [circle, draw, fill=black!50, inner sep=0pt, minimum width=4pt]{};
    \node (KK666) at (0.75,-4.55) []{\tiny{$(2,q-2)$}};
    \node (K6666) at (0.6,-4.8) [circle, draw, fill=black!50, inner sep=0pt, minimum width=4pt]{};
    \node (KK6666) at (0.4,-4.75) []{\tiny{$(2,q-1)$}};
    %\node (L6) at (2,-4.5) []{\ldots};

    % Kanten

    \draw[-] (A666) to (A6666);
    %\draw[-] (J666) to (J6666);
    \draw[-] (K666) to (K6666);
    \draw[-] (A6) to (B6);
    \draw[-] (B6) to (C6);
    \draw[-] (C6) to (D6);
    \draw[-] (D6) to (E6);
    %    \draw[-] (E6) to (F6);
    \draw[-] (F6) to (G6);
    %    \draw[-] (G6) to (H6);
    \draw[-] (I6) to (B6);
    \draw[-] (I6) to (D6);
    %\draw[-] (I6) to (J6);
    \draw[-] (I6) to (K6);
    %\draw[-] (J6) to (J66);
    \draw[-] (K6) to (K66);
    %\draw[-] (C6) to (J6);
    \draw[-] (C6) to (K6);
    \draw[dashed] ([xshift=0.5]C6.north) to ([xshift=0.5]I6.south);
    \draw[dashed] ([xshift=-0.5]C6.north) to ([xshift=-0.5]I6.south);

  \end{tikzpicture}
  \caption{The diagram $T_{p,q,r}$} \label{def:TreeSing}
\end{figure}

The monodromy group of this singularity is given by the extended Weyl group $W$ attached to the diagram $\Gamma$. Denote by $\Lambda^*$ the set of vanishing cycles of $f$ and by $M$ the corresponding Milnor lattice. They are given by the generalized root system attached to $\Gamma$ and the corresponding root lattice.
Denote by $h_*:M \rightarrow M$ the (classical) monodromy operator of $f$. We can identify the latter one with a Coxeter transformation in $W$. As a consequence of Theorems \ref{thm:main} and \cite[Theorem 1.3]{BWY21} we obtain:

\begin{customthm}{\ref{thm:dist_bases}}
The set of all distinguished bases of vanishing cycles of $f$ is given by the set
$$
\{ (\delta_1 , \ldots, \delta_n) \in (\Lambda^*)^n \mid \spanz(\delta_1, \ldots , \delta_n)=M, ~s_{\delta_1} \cdots s_{\delta_n}=h_* \}.
$$
\end{customthm}
%\newpage
%\nocite{*}

\bibliography{mybibDerived2}

\providecommand{\bysame}{\leavevmode\hbox to3em{\hrulefill}\thinspace}
\providecommand{\MR}{\relax\ifhmode\unskip\space\fi MR }
% \MRhref is called by the amsart/book/proc definition of \MR.
\providecommand{\MRhref}[2]{%
  \href{http://www.ams.org/mathscinet-getitem?mr=#1}{#2}
}
\providecommand{\href}[2]{#2}
\begin{thebibliography}{10}

\bibitem{HTT07}
Lidia Angeleri~H\"{u}gel, Dieter Happel, and Henning Krause, \emph{Basic
  results of classical tilting theory}, Handbook of tilting theory, London
  Math. Soc. Lecture Note Ser., vol. 332, Cambridge Univ. Press, Cambridge,
  2007, pp.~9--13. \MR{2384605}

\bibitem{Arn73}
Vladimir~I. Arnold, \emph{Remarks on the method of stationary phase and on the
  {C}oxeter numbers}, Uspehi Mat. Nauk \textbf{28} (1973), no.~5(173), 17--44.
  \MR{0397777}

\bibitem{BDSW14}
Barbara Baumeister, Matthew Dyer, Christian Stump, and Patrick Wegener, \emph{A
  note on the transitive {H}urwitz action on decompositions of parabolic
  {C}oxeter elements}, Proc. Amer. Math. Soc. Ser. B \textbf{1} (2014),
  149--154. \MR{3294251}

\bibitem{BMW25}
Barbara Baumeister, Jon McCammond, and Patrick Wegener, \emph{The hyperbolic
  cover of an elliptic {W}eyl {G}roup}, arXiv e-prints (2025),
  arxiv:2411.06401.

\bibitem{BW18}
Barbara Baumeister and Patrick Wegener, \emph{A note on {W}eyl groups and root
  lattices}, Arch. Math. (Basel) \textbf{111} (2018), no.~5, 469--477.
  \MR{3859428}

\bibitem{BWY21}
Barbara Baumeister, Patrick Wegener, and Sophiane Yahiatene, \emph{Extended
  {W}eyl groups, {H}urwitz transitivity and weighted projective lines {I}:
  Generalities and the tubular case}, arXiv e-prints (2024), arxiv:1808.05083.

\bibitem{Bou02}
Nicolas Bourbaki, \emph{Lie groups and {L}ie algebras. {C}hapters 4--6},
  Elements of Mathematics (Berlin), Springer-Verlag, Berlin, 2002, Translated
  from the 1968 French original by Andrew Pressley. \MR{1890629}

\bibitem{Br07}
Kristian Br\"{u}ning, \emph{Thick subcategories of the derived category of a
  hereditary algebra}, Homology Homotopy Appl. \textbf{9} (2007), no.~2,
  165--176. \MR{2366948}

\bibitem{Deod82}
Vinay~V. Deodhar, \emph{On the root system of a {C}oxeter group}, Comm. Algebra
  \textbf{10} (1982), no.~6, 611--630. \MR{647210}

\bibitem{DL11}
Matthew~J. Dyer and Gus~I. Lehrer, \emph{Reflection subgroups of finite and
  affine {W}eyl groups}, Trans. Amer. Math. Soc. \textbf{363} (2011), no.~11,
  5971--6005. \MR{2817417}

\bibitem{Ebe07}
Wolfgang Ebeling, \emph{Functions of several complex variables and their
  singularities}, Graduate Studies in Mathematics, vol.~83, American
  Mathematical Society, Providence, RI, 2007, Translated from the 2001 German
  original by Philip G. Spain. \MR{2319634}

\bibitem{Ebe19}
\bysame, \emph{Distinguished bases and monodromy of complex hypersurface
  singularities}, Handbook of geometry and topology of singularities. {I},
  Springer, Cham, [2020] \copyright 2020, pp.~449--490. \MR{4261558}

\bibitem{EE98}
Henrik Eriksson and Kimmo Eriksson, \emph{Affine {W}eyl groups as infinite
  permutations}, Electron. J. Combin. \textbf{5} (1998), Research Paper 18, 32.
  \MR{1611984}

\bibitem{Gab74}
Andrei~M. Gabri\`elov, \emph{Dynkin diagrams of unimodal singularities},
  Funkcional. Anal. i Prilo\v{z}en. \textbf{8} (1974), no.~3, 1--6.
  \MR{0367274}

\bibitem{GL87}
Werner Geigle and Helmut Lenzing, \emph{A class of weighted projective curves
  arising in representation theory of finite-dimensional algebras},
  Singularities, representation of algebras, and vector bundles ({L}ambrecht,
  1985), Lecture Notes in Math., vol. 1273, Springer, Berlin, 1987,
  pp.~265--297. \MR{915180}

\bibitem{Hap01}
Dieter Happel, \emph{A characterization of hereditary categories with tilting
  object}, Invent. Math. \textbf{144} (2001), no.~2, 381--398. \MR{1827736}

\bibitem{HK16}
Andrew Hubery and Henning Krause, \emph{A categorification of non-crossing
  partitions}, J. Eur. Math. Soc. (JEMS) \textbf{18} (2016), no.~10,
  2273--2313. \MR{3551191}

\bibitem{Hum90}
James~E. Humphreys, \emph{Reflection groups and {C}oxeter groups}, Cambridge
  Studies in Advanced Mathematics, vol.~29, Cambridge University Press,
  Cambridge, 1990. \MR{1066460}

\bibitem{IS10}
Kiyoshi Igusa and Ralf Schiffler, \emph{Exceptional sequences and clusters}, J.
  Algebra \textbf{323} (2010), no.~8, 2183--2202. \MR{2596373}

\bibitem{IT09}
Colin Ingalls and Hugh Thomas, \emph{Noncrossing partitions and representations
  of quivers}, Compos. Math. \textbf{145} (2009), no.~6, 1533--1562.
  \MR{2575093}

\bibitem{Kac90}
Victor~G. Kac, \emph{Infinite-dimensional {L}ie algebras}, third ed., Cambridge
  University Press, Cambridge, 1990. \MR{1104219}

\bibitem{KL86}
Paul Kluitmann, \emph{Ausgezeichnete {B}asen erweiterter affiner
  {W}urzelgitter}, Bonner Mathematische Schriften, vol. 185, Universit\"{a}t
  Bonn, Mathematisches Institut, Bonn, 1987, Dissertation, Rheinische
  Friedrich-Wilhelms-Universit\"{a}t, Bonn, 1986. \MR{930668}

\bibitem{Kra12}
Henning Krause, \emph{Report on locally finite triangulated categories}, J.
  K-Theory \textbf{9} (2012), no.~3, 421--458. \MR{2955969}

\bibitem{Len99}
Helmut Lenzing, \emph{Coxeter transformations associated with
  finite-dimensional algebras}, Computational methods for representations of
  groups and algebras ({E}ssen, 1997), Progr. Math., vol. 173, Birkh\"{a}user,
  Basel, 1999, pp.~287--308. \MR{1714618}

\bibitem{LR16}
Joel~Brewster Lewis and Victor Reiner, \emph{Circuits and {H}urwitz action in
  finite root systems}, New York J. Math. \textbf{22} (2016), 1457--1486.
  \MR{3603073}

\bibitem{Looi80}
Eduard Looijenga, \emph{Invariant theory for generalized root systems}, Invent.
  Math. \textbf{61} (1980), no.~1, 1--32. \MR{587331}

\bibitem{McC15}
Jon McCammond, \emph{Dual euclidean {A}rtin groups and the failure of the
  lattice property}, J. Algebra \textbf{437} (2015), 308--343. \MR{3351966}

\bibitem{RI84}
Claus~Michael Ringel, \emph{Tame algebras and integral quadratic forms},
  Lecture Notes in Mathematics, vol. 1099, Springer-Verlag, Berlin, 1984.
  \MR{774589}

\bibitem{Ri94}
\bysame, \emph{The braid group action on the set of exceptional sequences of a
  hereditary {A}rtin algebra}, Abelian group theory and related topics
  ({O}berwolfach, 1993), Contemp. Math., vol. 171, Amer. Math. Soc.,
  Providence, RI, 1994, pp.~339--352. \MR{1293154}

\bibitem{Sai74}
Kyoji Saito, \emph{Einfach-elliptische {S}ingularit\"{a}ten}, Invent. Math.
  \textbf{23} (1974), 289--325. \MR{354669}

\bibitem{Sai85}
\bysame, \emph{Extended affine root systems. {I}. {C}oxeter transformations},
  Publ. Res. Inst. Math. Sci. \textbf{21} (1985), no.~1, 75--179. \MR{780892}

\bibitem{ST97}
Kyoji Saito and Tadayoshi Takebayashi, \emph{Extended affine root systems.
  {III}. {E}lliptic {W}eyl groups}, Publ. Res. Inst. Math. Sci. \textbf{33}
  (1997), no.~2, 301--329. \MR{1442503}

\bibitem{STW16}
Yuuki Shiraishi, Atsushi Takahashi, and Kentaro Wada, \emph{On {W}eyl groups
  and {A}rtin groups associated to orbifold projective lines}, J. Algebra
  \textbf{453} (2016), 249--290. \MR{3465355}

\bibitem{ST89}
Ernst Snapper and Robert~J. Troyer, \emph{Metric affine geometry}, second ed.,
  Dover Books on Advanced Mathematics, Dover Publications, Inc., New York,
  1989. \MR{1034484}

\bibitem{Lek}
Harm van~der Lek, \emph{Extended {A}rtin groups}, Singularities, {P}art 2
  ({A}rcata, {C}alif., 1981), Proc. Sympos. Pure Math., vol.~40, Amer. Math.
  Soc., Providence, RI, 1983, pp.~117--121. \MR{713240}

\bibitem{Lek2}
Harm van~der Lek, \emph{The homotopy type of complex hyperplane complements},
  Ph.D. thesis, University of Nijmegen, 1983.

\bibitem{PW17b}
Patrick Wegener, \emph{Hurwitz action in coxeter groups and elliptic weyl
  groups}, Ph.D. thesis, Universit\"at Bielefeld, 2017.

\bibitem{PW17a}
Patrick Wegener, \emph{On the {H}urwitz action in affine {C}oxeter groups}, J.
  Pure Appl. Algebra \textbf{224} (2020), no.~7, 106308. \MR{4058242}

\bibitem{WY19}
Patrick Wegener and Sophiane Yahiatene, \emph{A note on non-reduced reflection
  factorizations of coxeter elements}, Algebraic Combinatorics \textbf{3}
  (2020), no.~2, 465--469.

\end{thebibliography}
\bibliographystyle{amsplain}

\end{document}